\definecolor{color1}{RGB}{27,158,119}
\definecolor{color2}{RGB}{217,95,2}
\definecolor{color3}{RGB}{117,112,179}
\definecolor{color4}{RGB}{231,41,138}
\newtheorem{theorem}{Theorem}[section]
\newtheorem{lemma}[theorem]{Lemma}
\newtheorem{proposition}[theorem]{Proposition}
\newtheorem{remark}[theorem]{Remark}
\theoremstyle{definition}
\newtheorem{definition}[theorem]{Definition}
\def\R{\mathbb R}
\def\eps{\varepsilon}
\def\T{\mathbb T}
\newcommand{\Graph}{\mathcal{G}}
\newcommand{\GraphTimesZeroL}{\mathcal{G}\times[0,L]}
\newcommand{\GraphTimesZeroOne}{\mathcal{G}\times[0,1]}
\newcommand{\Lib}{\mathcal{L}}
\DeclareMathOperator{\supp}{supp}
\let\abs\relax
\DeclarePairedDelimiter{\abs}{\lvert}{\rvert}
\DeclarePairedDelimiterX{\dual}[2]{\langle}{\rangle}{#1, #2}
\DeclarePairedDelimiterX{\scalar}[2]{\lparen}{\rparen}{#1, #2}
\title[Ground States on Open Books and Dimension Reduction]{Ground States for the Nonlinear Schr\"odinger Equation on Open Books and Dimensional Reduction to Metric Graphs}
\author[S. Le Coz]{Stefan Le Coz}
\author[B. Shakarov]{Boris Shakarov}
\address{Stefan Le Coz and Boris Shakarov, Univ Toulouse, INUC, UT2J, INSA Toulouse, TSE, CNRS, IMT, Toulouse, France.}
\email{stefan.le-coz@math.univ-toulouse.fr}
\email{boris.shakarov@math.univ-toulouse.fr}
\thanks{The work of S. L. C. and B. S. is 
  partially supported by ANR-11-LABX-0040-CIMI and the ANR project NQG ANR-23-CE40-0005}
\date{\today}
\subjclass[2010]{35Q55 (35A15, 35R02)}
\date{\today}
\keywords{nonlinear Schr\"odinger equation, standing waves, ground state, nonlinear quantum graphs, open books}
\numberwithin{equation}{section}
\begin{document}

\begin{abstract}
In this work, we study the dimensional reduction of stationary states in the shrinking limit for a broad class of two-dimensional domains, called open books, to their counterparts on metric graphs. An open book is a two-dimensional structure formed by rectangular domains sharing common boundaries. We first develop a functional-analytic framework suited to variational problems on open books and establish the existence of solutions as constrained action minimizers.

For graph-based open books (i.e., those isomorphic to the product of a graph with an interval) we prove the existence of a sharp transition in the dimensionality of ground states. Specifically, there exists a critical transverse width: below this threshold, all ground states coincide with the ground states on the underlying graph trivially extended in the transverse direction; above it, ground states become genuinely two-dimensional.
\end{abstract}

\maketitle

\section{Introduction}

We consider the nonlinear Schrödinger equation set on a structure $\Lib$, which we will refer to as an \emph{open book}. Open books are popular structures in various areas of mathematics such as in contact topology or algebraic geometry (see e.g. \cite{Et06,Gi05}). They appear naturally in various areas of physics, e.g. in metamaterial designs (see \cite{LaTaCh22}).
They can also be considered as a form of generalized waveguides.
A prototypical example of the type of structure that we want to be able to treat is represented in Figure \ref{fig:proto}.  We will use the following definition.

\begin{figure} 
    \centering
\includegraphics[width=0.7\linewidth]{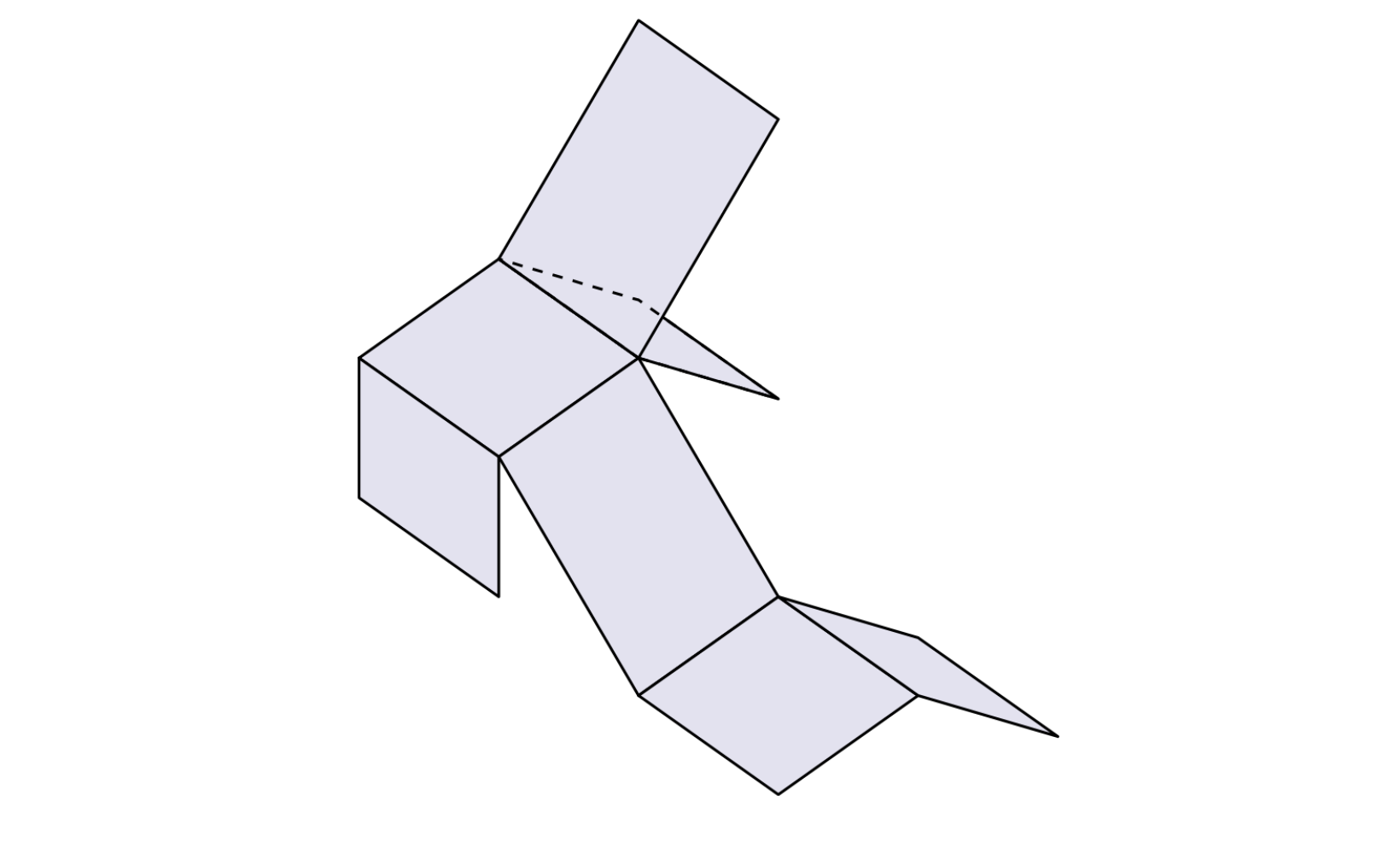}
    \caption{An open-book with seven pages}
    \label{fig:proto}
\end{figure}

\begin{definition}
    An \emph{open book} $\Lib$ is a collection of $1$-$d$ manifolds called \emph{bindings} $\mathcal{B} = (B_j)_{j\in \mathcal J\subset \mathbb N}$ and $2$-$d$ manifolds called      \emph{pages} $\mathcal{P} = (P_k)_{k\in \mathcal K\subset \mathbb N}$. A binding $B_j\in\mathcal{B}$ is characterized by a length $L_j\in(0,\infty]$ and is isometric to the interval $[0,L_j]$ ($[0,\infty)$ if $L_j=\infty$). A page $P_k\in\mathcal{P}$ is characterized by two lengths $L_k^1,L_k^2\in(0,\infty]$ and is isometric to the rectangle $[0,L_k^1]\times [0,L_k^2] $ (replacing $[0,L_k^{1,2}]$ by $[0,\infty)$ whenever $L_k^{1,2}=\infty$). For each page $P_k$, there exist bindings $(B_k^j)_{j\in\{0,\dots,J\}}\subset \mathcal{B}$, with $B_j\neq B_k$ if $j\neq k$, such that the boundary $\partial P_k$ of $P_k$ verifies 
    \[
    \partial P_k=\bigcup_{j\in\{0,\dots,J\}}B_k^j.
    \]
    Here, $J=3$ if $L_k^1+L_k^2<\infty$, $J=2$ if $L_k^1=\infty$ or $L_k^2=\infty$, $J=1$ if $L_k^1=L_k^2=\infty$. 
    We use the notation $P\sim B$ to express the fact that $P$ is one of the pages incident to the binding $B$. 

\end{definition}

\begin{remark}
With the above definition, where the bindings of a page are all different, every single page can be embedded in $\mathbb R^2$. The definition can be relaxed to allow for cylindrical pages (i.e., pages having two identical non-consecutive bindings), toroidal pages (i.e., pages having two by two non-consecutive bindings identical), or even M\"obius strip-type pages. Indeed, these cases can be included in our definition by the introduction of artificial bindings, cutting the page into two (or four) new pages, and having all bindings different. On the other hand, we cannot relax the definition to allow for conical pages (i.e., two consecutive bindings are identical).
    Observe that an infinite strip (such as the one considered in \cite{LeSh24}) is formed of two pages with one infinite length connected by their (transversal) finite length binding. Similarly, a half-plane is made of two quarter-plane pages.
\end{remark}

Various non-equivalent definitions of open books or stratified structures are used depending on the context. The definition that we adopt in this work is tailored to our purposes. On the one hand, it is more restrictive than definitions used in other contexts, such as contact geometry (see e.g. \cite{Et06}), as we are working only with $2$-$d$ pages isometric to rectangles. On the other hand, 
the fact that we do not embed our books in $\mathbb R^d$ (as is done e.g. in the context of stratified sets in \cite{NiPe04} or for the spectral analysis in \cite{AkKu24,Co20,CoKu20}) allows for extra flexibility in the analysis, as we do not have to take into consideration geometric features of the pages such as curvature. That flexibility is reminiscent of the flexibility allowed by quantum graphs by concentrating the main features of the structure at the vertices while considering a ``simple'' behavior on the edges (see \cite{BeKu13} for an introduction to quantum graphs).  

Our aim in this work is to study variational problems on open books and connections with their quantum graph counterparts. 
For functions  $u\in H^1_D(\Lib)$ (we refer to Section \ref{sec:functional_setting} for the precise functional setting), we define %
the \emph{action} and the \emph{Nehari} 
functionals by
\begin{align*}
    S_\omega(u)&=\frac12\norm{\nabla u}_{L^2(\Lib)}^2+\frac\omega2\norm{ u}_{L^2(\Lib)}^2-\frac1{p+1}\norm{ u}_{L^{p+1}(\Lib)}^{p+1},\\
        I_\omega(u)&=\norm{\nabla u}_{L^2(\Lib)}^2+\omega\norm{ u}_{L^2(\Lib)}^2-\norm{ u}_{L^{p+1}(\Lib)}^{p+1}.%
\end{align*}
We consider the following variational problem:
\begin{equation}\label{eqSDefIntro}
    \tilde{s}_\omega
    = \inf\Bigl\{ S_\omega(u) : u\in H^1_D(\Lib)\setminus\{0\},\ I_\omega(u)=0 \Bigr\},
\end{equation}
that is, we minimize the action over the Nehari manifold.  
The tilde $\tilde{}$ in the notation reflects the fact that we will typically work with an equivalent formulation of the problem, denoted without the tilde (see Lemma~\ref{lemSEquiv}):
    \begin{equation*}
        s_\omega = \frac{p-1}{2(p+1)} \inf\left\{\| u \|_{L^{p+1}(\Lib)}^{p+1}, \, u \in H^1_D(\Lib) \setminus\{0\}, \, I_{\omega}(u)\leq 0 \right\}.
    \end{equation*}
Minimizers of \eqref{eqSDefIntro} will be referred to as \emph{(action) ground states}.  
Such minimization problems are classical in the study of nonlinear Schrödinger equations on $\mathbb{R}^d$ and on quantum graphs.  
Ground states correspond to standing waves of the evolution equation and are expected to play a fundamental role in the long-time dynamics.
Alternative variational approaches also exist, most notably the minimization of the Schrödinger energy under a prescribed $L^2$-norm constraint.  
We refer to \cite{DeDoGaSe23, DoSeTi23, JeLu22} for a detailed comparison between these two variational frameworks.

In the present work, we focus on action ground states.  
Note that any such minimizer solves, on the book $\Lib$, the stationary nonlinear Schrödinger equation
\begin{equation}\label{eq:snls}
    Hu + \omega u - |u|^{p-1}u = 0,
\end{equation}
where $H$ denotes the Laplacian operator on $\Lib$ (see Section~\ref{sec:functional_setting} for its precise definition).  

Our first main result establishes a general existence theory.  
Let $\omega_\Lib$ be the bottom of the spectrum of $H$, defined in \eqref{eqSpBottom}.  

\begin{theorem}\label{thm:existence}
    Let $\Lib$ be a connected book, either finite or periodic, and let $\omega > -\omega_\Lib$.  
    If $\Lib$ is finite, assume additionally that $s_{\omega}<s_{\omega}^\infty$, where the action level at infinity $s_{\omega}^\infty$ is defined in \eqref{eqSOmInfinit}.  
    Then $s_\omega>0$ and there exists an action ground state, i.e., a nontrivial minimizer for $s_\omega$.
\end{theorem}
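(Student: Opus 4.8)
The plan is a concentration-compactness argument for the equivalent problem $s_\omega$ of Lemma~\ref{lemSEquiv}, whose objective $\norm{u}_{L^{p+1}(\Lib)}^{p+1}$ is homogeneous. Writing $Q_\omega(u)=\norm{\nabla u}_{L^2(\Lib)}^2+\omega\norm{u}_{L^2(\Lib)}^2$, the assumption $\omega>-\omega_\Lib$ makes $Q_\omega$ equivalent to $\norm{\cdot}_{H^1_D(\Lib)}^2$; together with the subcritical two-dimensional embedding $H^1_D(\Lib)\hookrightarrow L^{p+1}(\Lib)$ the constraint $I_\omega(u)\le 0$ yields
\[
\norm{u}_{L^{p+1}(\Lib)}^{p+1}\ge Q_\omega(u)\gtrsim\norm{u}_{H^1_D(\Lib)}^2\gtrsim\norm{u}_{L^{p+1}(\Lib)}^2,
\]
so every admissible $u$ satisfies $\norm{u}_{L^{p+1}(\Lib)}\ge c>0$ and hence $s_\omega>0$. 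A minimizing sequence is then bounded in $H^1_D(\Lib)$, and by Ekeland's variational principle I may take it to be a Palais--Smale sequence $(u_n)$ for $S_\omega$ restricted to the Nehari manifold, with $\norm{u_n}_{L^{p+1}(\Lib)}^{p+1}\to\tfrac{2(p+1)}{p-1}s_\omega$.

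The core step is a profile decomposition adapted to the geometry of $\Lib$: up to a subsequence,
\[
u_n=\sum_{j\ge 1}\tau_n^{\,j}\,w_j+r_n,\qquad \norm{r_n}_{L^{p+1}(\Lib)}\to 0,\qquad \tfrac{2(p+1)}{p-1}s_\omega=\sum_{j\ge 1}\norm{w_j}_{L^{p+1}}^{p+1},
\]
where each $\tau_n^{\,j}$ is a translation and each $w_j\ne 0$ is a critical point of the limiting action attached to the asymptotic position of $\tau_n^{\,j}$. Two limiting geometries occur. If the supports $\tau_n^{\,j}(\supp w_j)$ remain in a bounded part of $\Lib$ (the only possibility in the periodic case, where the $\tau_n^{\,j}$ are period-shifts preserving $\Lib$), then $w_j$ solves \eqref{eq:snls} on $\Lib$ and lies on its Nehari manifold, so $\norm{w_j}_{L^{p+1}}^{p+1}\ge\tfrac{2(p+1)}{p-1}s_\omega$. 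If instead $\tau_n^{\,j}$ escapes along a half-infinite page, the page degenerates to the translation-invariant strip underlying $s_\omega^\infty$, the profile $w_j$ solves the corresponding limit equation on that strip, and $\norm{w_j}_{L^{p+1}}^{p+1}\ge\tfrac{2(p+1)}{p-1}s_\omega^\infty$.

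Counting masses now finishes the proof. In the finite case the strict inequality $s_\omega<s_\omega^\infty$ forbids any escaping profile, since a single one would already contribute more than the total mass $\tfrac{2(p+1)}{p-1}s_\omega$; hence all profiles live on $\Lib$, each of mass at least $\tfrac{2(p+1)}{p-1}s_\omega$, and as these sum to $\tfrac{2(p+1)}{p-1}s_\omega$ with $s_\omega>0$ there is exactly one profile $w_1$. In the periodic case every profile already lives on $\Lib$, and the same count using only $s_\omega>0$ again leaves a single profile. In both regimes $w_1$ solves \eqref{eq:snls} on $\Lib$, lies on the Nehari manifold, so $I_\omega(w_1)=0$, and carries the full mass $\norm{w_1}_{L^{p+1}}^{p+1}=\tfrac{2(p+1)}{p-1}s_\omega$; therefore $w_1$ realizes $s_\omega$ and is the desired action ground state, with $u_n-\tau_n^{\,1}w_1\to0$ in $L^{p+1}(\Lib)$.

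I expect the principal difficulty to be the profile decomposition itself on the open book. One must prove a vanishing lemma and extract profiles in the mixed one/two-dimensional geometry, and—crucially—identify the limit problem at each escaping end as the stationary equation on the translation-invariant strip carrying $s_\omega^\infty$, so that escaping profiles are correctly charged the level $s_\omega^\infty$. Once this structural decomposition is established, the mass-counting together with the strict inequality $s_\omega<s_\omega^\infty$ closes the argument.
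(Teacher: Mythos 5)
Your argument that $s_\omega>0$ is correct and is essentially the paper's own (Lemma \ref{lemSOmMonotone}): coercivity of $\|\nabla u\|_{L^2}^2+\omega\|u\|_{L^2}^2$ for $\omega>-\omega_\Lib$ combined with the Sobolev embedding, valid on finite or periodic books. The existence part, however, has a genuine gap: everything rests on a profile decomposition ``adapted to the geometry of $\Lib$'' which you assert but never prove, and which you yourself flag as the principal difficulty. This is not a routine adaptation of the standard decomposition, for concrete reasons. First, a finite non-compact book carries no group of translations at all (this absence of translation invariance is exactly the obstruction the paper emphasizes), so your operators $\tau_n^{\,j}$ are undefined objects in the finite case; one would have to replace them by escapes along each semi-infinite page and re-prove orthogonality and energy splitting by hand. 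Second, the identification of the limit problem at an escaping end with ``the translation-invariant strip underlying $s_\omega^\infty$'' is not automatic: if several semi-infinite pages share an infinite binding, the geometry seen by an escaping profile is not a single strip but several half-strips glued along an infinite line, and relating the abstractly defined level \eqref{eqSOmInfinit} to concrete strip levels is the content of Lemma \ref{lemInfty}, which requires a structural hypothesis (every page has at least one finite side) that Theorem \ref{thm:existence} does not assume. Third, in the periodic case you still need a vanishing lemma (Lemma \ref{lemLions}) to produce concentration points before period-shifting; this step is silently absorbed into the unproven decomposition.

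The gap matters because the theorem can be proved with far lighter tools, which is what the paper does. For finite books: a minimizing sequence is bounded and converges weakly to some $u$; if $u\equiv 0$, the sequence itself is admissible in the definition \eqref{eqSOmInfinit}, so $\liminf c_p\|u_n\|_{L^{p+1}(\Lib)}^{p+1}\geq s_\omega^\infty>s_\omega$, a contradiction — the abstract definition of $s_\omega^\infty$ makes non-vanishing immediate without identifying any limit equation; then the Brezis--Lieb lemma gives $I_\omega(u)\leq 0$ and weak lower semicontinuity concludes. For periodic books: Lemma \ref{lemLions}, a period shift into the fundamental quire, Brezis--Lieb, and the strict monotonicity of $\omega\mapsto s_\omega$ (Lemma \ref{lemSOmMonotone}) yield strong $L^2$ convergence of the minimizing sequence; no critical-point equations or profiles are used anywhere. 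If you wish to keep your route, you must actually state and prove the decomposition (vanishing lemma, extraction of profiles along each escaping direction, identification of the limit functionals, mass splitting), which is considerably more work than the direct argument — and your mass-counting, which is correct conditional on that decomposition, would then go through.
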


As mentioned above, minimizing the action on the Nehari manifold is a classical method, going back to \cite{Ne60}, for constructing solutions to \eqref{eq:snls}.  
In our setting, the main difficulty lies in the lack of translation invariance whenever the book is neither compact nor periodic.  
Because books may exhibit highly general geometries, one cannot directly apply the classical concentration–compactness principle of \cite{Li85}, which is typically used to recover compactness of minimizing sequences.  
Instead, one encounters a specific loss of compactness known as \emph{runaway behavior}, first identified in the context of quantum graphs in \cite{AdCaFiNo14}.  
This phenomenon motivates the additional condition $s_\omega < s_{\omega}^\infty$, in analogy with the corresponding requirement in the graph setting \cite{DeDoGaSe23, CoDoGaSeTi23}.  

Another widely used approach to obtaining solutions of \eqref{eq:snls} is the minimization of the energy under a fixed $L^2$-norm constraint (see, for example, \cite{LeSh24}).  
In contrast, our analysis focuses on action ground states.  
One reason for this choice is that action minimizers exist for every $p>1$, whereas the existence of energy ground states is usually restricted to the subcritical regime $p\in(1,3]$ \cite{BeCa81, CaLi82}.

We now present our second main result. We begin with the following definition.

\begin{definition}\label{defGraphBase}
    An open book $\Lib$ is said to be \emph{graph-based} if there exists a connected graph $\Graph$ and $L>0$ such that $\Lib$ is isomorphic to the product $\Graph \times [0,L]$. In this case, we write $\Lib_L = \Graph \times [0,L]$.
\end{definition}

We are interested in the limiting behavior of graph-based books as the transverse thickness $L$ tends to zero.  
In this regime, the natural limiting structure is the graph $\Graph$.  
For $\Lib_L = \Graph \times [0,L]$, the minimization problem \eqref{eqSDefIntro} depends on $L$, and we denote the corresponding infimum by $\tilde{s}_{\omega,L}$. As before, we work with an equivalent formulation without the tilde, see \eqref{eqSOmegaLMin}.  

Our second main result shows that the dimensionality of action ground states undergoes a sharp transition as $L$ varies.

\begin{theorem}\label{thmShrinking1}
    Let $\Graph$ be a finite or periodic graph. Let $\Lib_L = \Graph \times [0,L]$ be graph-based, and let $\omega > -\omega_{\mathcal{G}}$. Let $s_{\omega,L}$ be the minimization problem defined in \eqref{eqSOmegaLMin}. 
    Then:
\begin{enumerate}
    \item The map $L \mapsto s_{\omega,L}$ is continuous on $[0,\infty)$. Moreover, there exists $L_{min} \geq 0$ such that $s_{\omega,L}$ is constant on $[0,L_{min}]$ and strictly decreasing on $(L_{min},\infty)$.
    \item Let $s_{\omega,\Graph}$ and $s_{\omega,\Graph}^\infty$ be defined in \eqref{eqSOmInfinit} and \eqref{eqSOmega_G_Min}. If $s_{\omega,\Graph} < s_{\omega,\Graph}^\infty$, then $L_{min} > 0$. Moreover, for any $L \in [0,L_{min}]$, minimizers of $s_{\omega,L}$ exist and every minimizer $u_L$ satisfies $$\partial_y u_L \equiv 0.$$
    \item If, for some $L > L_{min}$, $s_{\omega,L}$ admits a minimizer $u_L$, then $\partial_y u_L \not\equiv 0$.
\end{enumerate}
\end{theorem}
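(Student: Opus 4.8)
My plan is to argue by contradiction, playing the strict monotonicity of part~(1) against the fact that transverse-independent functions reproduce exactly the action of the underlying graph problem.

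First I would record a dimensional-reduction identity for the \emph{one-dimensional sector}, the functions with $\partial_y u\equiv 0$. By the rescaling built into the definition \eqref{eqSOmegaLMin} of $s_{\omega,L}$ (the transverse variable runs over a fixed interval, with transverse derivatives weighted by $L^{-2}$), any such $u$ has the form $u=v\otimes\mathbf 1$ with $v\in H^1_D(\Graph)$, and a direct computation gives
\[
S_{\omega,L}(v\otimes\mathbf 1)=S^{\Graph}_\omega(v),\qquad I_{\omega,L}(v\otimes\mathbf 1)=I^{\Graph}_\omega(v),
\]
both independent of $L$. Thus the Nehari constraint for such $u$ is exactly the graph constraint $I^{\Graph}_\omega(v)=0$, so the infimum of the action over the one-dimensional sector equals $s_{\omega,\Graph}$ for every $L$; in particular $s_{\omega,L}\le s_{\omega,\Graph}$, and every admissible competitor of this form satisfies $S_{\omega,L}(v\otimes\mathbf 1)\ge s_{\omega,\Graph}$.

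Next I would combine this with part~(1). Testing at $L=L_{min}$ with one-dimensional competitors yields $s_{\omega,L_{min}}\le s_{\omega,\Graph}$, while the strict decrease of $L\mapsto s_{\omega,L}$ on $(L_{min},\infty)$ gives, for every $L>L_{min}$,
\[
s_{\omega,L}<s_{\omega,L_{min}}\le s_{\omega,\Graph}.
\]
Now suppose, for contradiction, that for some such $L$ the problem $s_{\omega,L}$ admits a minimizer $u_L$ with $\partial_y u_L\equiv 0$. Writing $u_L=v\otimes\mathbf 1$, the reduction identity and the admissibility of $v$ give
\[
s_{\omega,L}=S_{\omega,L}(u_L)=S^{\Graph}_\omega(v)\ge s_{\omega,\Graph}>s_{\omega,L},
\]
which is absurd. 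Hence every minimizer at $L>L_{min}$ must satisfy $\partial_y u_L\not\equiv 0$.

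I expect the only delicate point to be the reduction identity of the first step: its precise form, and especially the $L$-independence of the one-dimensional infimum, is dictated by the normalization in \eqref{eqSOmegaLMin}. That $L$-independence is exactly what renders the constant plateau of part~(1) compatible with a graph-valued threshold, and it is the only ingredient of part~(3) not already furnished by parts~(1)--(2); once it is in hand, the contradiction above is immediate and $s_{\omega,\Graph}$ is pinned down as the precise dividing line between one- and two-dimensional ground states.
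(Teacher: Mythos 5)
Your argument for part~(3) is correct, and it is essentially the paper's own proof of that part (Proposition~\ref{PrpLsharp}): both are contradiction arguments resting on the observation that for a $y$-independent function the Nehari functional $I_{\omega,L}$ does not depend on $L$, so a $y$-independent minimizer at some $L>L_{min}$ would be admissible at level $L_{min}$, contradicting the strict decrease of $L\mapsto s_{\omega,L}$. Your detour through $s_{\omega,\Graph}$ rather than comparing directly with $s_{\omega,L_{min}}$ is purely cosmetic, since $s_{\omega,\Graph}=s_{\omega,0}=s_{\omega,L_{min}}$; likewise, the slight conflation of the action formulation with the $L^{p+1}$-formulation of \eqref{eqSOmegaLMin} is harmless because minimizers lie on the Nehari manifold, where $S_{\omega,L}(u)=c_p\|u\|_{L^{p+1}}^{p+1}$.

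The genuine gap is one of scope: the statement is the full three-part theorem, and your proposal proves only part~(3), explicitly taking parts~(1) and~(2) as given (``the only ingredient of part~(3) not already furnished by parts~(1)--(2)''). Those two parts are the substance of the result, and nothing in your proposal addresses them. Part~(1) (Proposition~\ref{propMonoton}) requires the continuity of $L\mapsto s_{\omega,L}$ --- in particular at $L=0$, which the paper proves via the transverse average $\tilde u_L(x,y)=\int_0^1 u_L(x,z)\,dz$ and the improved decay rate $L^{-2}\|\partial_y u_L\|_{L^2}^2\to 0$ --- together with the monotonicity coming from the comparison \eqref{eqNehariComp}. Part~(2) (Proposition~\ref{prop:rigidity}) is the hardest: the positivity of $L_{min}$ under $s_{\omega,\Graph}<s_{\omega,\Graph}^\infty$, the existence of minimizers for $L\le L_{min}$, and the rigidity $\partial_y u_L\equiv 0$ of \emph{all} such minimizers are obtained by elliptic regularity, testing the Euler--Lagrange equation against $\partial_{yy}\bar u_L$ to derive the identity \eqref{eq:hessian_d_y}, controlling the nonlinear term through strong convergence of (averaged) minimizers as $L\to 0$, and a transverse Poincar\'e inequality leading to \eqref{eqCrucialIneq}. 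None of these ideas, nor any substitute for them, appears in your proposal; note also that the threshold $L_{min}$ whose strict monotonicity you invoke is itself only produced by the proof of part~(1), so your argument cannot even be stated without first carrying out that work.
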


We now comment on the above result and compare it with the existing literature.

Point~$(2)$ provides a rigorous justification for the use of quantum graphs as effective one-dimensional models for thin, two-dimensional network-shaped structures.  
While the correspondence between graphs and higher-dimensional domains is well understood in the linear setting (see, e.g., \cite{BeKu13,Ex08,Po12}), rigorous nonlinear counterparts remain scarce.  
Notable exceptions include \cite{Ko00,Ko02}, which treat compact domains and general solutions to \eqref{eq:snls}, and \cite{LeSh24}, which derives a line with a delta potential as the limiting object associated with a fractured strip in the shrinking limit for energy minimizers.

The condition $s_{\omega,\Graph} < s_{\omega,\Graph}^\infty$ guarantees the existence of ground states on the graph $\Graph$, see \cite{DeDoGaSe23,CoDoGaSeTi23}.  
In the present work, we prove that this condition is sufficient not only to ensure the existence of ground states on the book $\Lib_L$, but also to show that these ground states coincide with the graph ground states extended trivially in the transverse variable for $L\leq L_{min}$.  
Furthermore, point~$(3)$ shows that $L_{min}$ is a sharp threshold: when $L > L_{min}$, one-dimensional solitons cease to minimize the action, and genuinely two-dimensional ground states emerge.  
This behavior parallels the transverse stability/instability phenomena of line solitons known for strips of the form $\R \times \T$; see, for example, \cite{AkBaIbKi24,BeWe10,RoTz09,Ya14,Ya15}.  
Our setting includes books isomorphic to strips $\R \times [0,L]$ with either Neumann or periodic boundary conditions, allowing us to recover the dimensional transition for ground states in a unified manner.

It is also worth pointing out that in \cite{TeTzVi14}, the case of a product space $\R^d \times \mathcal M$ with $\mathcal M$ compact is studied in the context of energy ground states.  
There it is shown that, for sufficiently small $L^2$-norm, energy minimizers depend trivially on the compact variable, relying on a scaling property of the ground state in~$\R^d$.  

Similarly, in \cite{LeSh24}, a fractured strip $\R \times [0,L]$ is considered, and it is proven that energy ground states remain independent of the transverse variable for sufficiently small~$L$.  
A crucial ingredient in that analysis is the existence of an explicit and unique positive ground state on $\R$.

Our approach differs from both works in several ways. We study \emph{action} rather than energy ground states; no explicit or unique minimizer is available on graphs; and the scaling acts solely on the transverse variable (for non-uniqueness, see the recent works \cite{Do25,DoSeTe25}).  
To the best of our knowledge, action ground states have not previously been examined from this standpoint.  
This novelty, combined with the broader scope of applicability, forms a key motivation for our focus on action minimizers.  
We believe that the methods developed here can be extended to other classes of product spaces, beyond the setting of open books.

The rest of the paper is organized as follows. In Section \ref{sec:preliminaries}, we begin with a collection of preliminaries. We start in Section \ref{sec:functional_setting} by describing the precise functional setting in which we are going to work. The key point is the definition of Sobolev spaces on books, along with the description of the matching conditions at the bindings. Some notation is collected in Section \ref{sec:notation}. In Section \ref{sec:metric}, we define a metric structure on the book by constructing a suitable distance, and we introduce the concepts of connected, finite and compact books. Section \ref{sec:decay} is devoted to the proof that critical points of the action functional on books are exponentially decaying on semi-infinite pages. Section \ref{sec:examples} presents several relevant examples. 

Section \ref{secExistence} is devoted to the question of the existence of an action minimizer. We begin by reformulating the problem into an equivalent one, which corresponds to minimizing the $L^{p+1}$-norm over a side of the Nehari manifold, see the definition of $s_\omega$ in \eqref{eqSDefEquiv}. We then study the so-called problem at infinity (Section \ref{secInfinity}) in the case of finite books and we show that the level at infinity is the same as the level of the widest semi-infinite strip. Existence of an action ground state for finite (Section \ref{sec:finite-books}) and periodic (Section \ref{sec:periodic}) books is then established. For finite books, the escaping at infinity of minimizing sequences is avoided by assuming that the Nehari level $s_\omega$ is below the level at infinity $s_\omega^\infty$. For periodic books, we use in a key manner the monotonicity properties of the function $\omega\to s_\omega$ to establish the convergence of minimizing sequences. 

In Section \ref{secGraphBased}, we study the shrinking limit of graph-based books of the type $\GraphTimesZeroL$ when the length $L$ tends to $0$. We first introduce a rescaling of the problem (Section \ref{sec:rescaling}), converting the book $\GraphTimesZeroL$ into the book $\GraphTimesZeroOne$ and transferring the dependency in $L$ to the Nehari functionals. We then study the rescaled minimization level function $L\to s_{\omega, L}$: we prove that the function is continuous, constant on an interval $[0, L_{min}]$ (with possibly $L_{min}=0$), then strictly decreasing towards $0$. The rigidity of minimizers at small length is then established in Section \ref{sec:rigidity}, where the properties of the levels function are combined with the properties of the minimizers' equations to show that the dependency in the transverse variable is necessarily trivial when $L$ is small. 

\section{Preliminaries}
\label{sec:preliminaries}

\subsection{Functional setting}
\label{sec:functional_setting}

Given an open book $\Lib =\{\mathcal{B},\mathcal{P}\}$, a function $u:\Lib\to\mathbb C$  is a collection of functions $u_k:P_k\to\mathbb C$ on each of the pages $P_k\in\mathcal{P}$. 

As in the case of quantum graphs (see e.g. \cite{BeKu13}), we define the Lebesgue spaces for $p\in[1,\infty]$ and Sobolev spaces for $s\geq 0$ on the open book $\Lib$ by
\[
L^p(\Lib):=\bigoplus_{P\in\mathcal{P}} L^p(P),\quad 
H^s(\Lib):=\bigoplus_{P\in\mathcal{P}}  H^s(P). 
\]
Here no compatibility condition is imposed on the bindings, i.e., functions on the open book might be multi-valued at the bindings. From their definition, Sobolev spaces on books inherit most of the properties of Sobolev spaces on individual pages (Sobolev continuous and compact injections, Gagliardo-Nirenberg inequalities, etc.). 
For instance, if the book $\Lib$ is either finite or periodic, then for any $2\leq q<\infty$ there exists $C>0$ such that for any $u\in H^1(\Lib)$ we have
\[
\norm{u}_{H^1(\Lib)}\leq C\norm{u}_{L^q(\Lib)}.
\]

The pages of $\Lib$ are rectangles and therefore contain corners.  
While Sobolev spaces are typically introduced for smooth domains, they have also been extensively studied on polygonal domains; see, in particular, the reference monograph \cite{Gr11}.  
We recall here the results that will be used throughout the sequel.

Let $u\in H^s(\Lib)$ with $s>1/2$.  
By the trace theorem on polygonal domains in $\mathbb{R}^2$ (see \cite[Theorem~1.5.2.3]{Gr11}), one may define traces of $u$ on the bindings of the pages.  
More precisely, let $u_k : P_k \to \mathbb{C}$ denote the restriction of $u$ to the page $P_k$, and let $\{B_j\}_{j=0,\dots,J}$ denote the boundary edges (bindings) of $P_k$.  
If $u_k\in W^{s,p}(P_k)$ for some $p>1$ and $s>1/p$, then the trace operator 
$$ u_k \to \ \{u_{kj} := u_{k|B_j}\}_{j=0\dots,J} $$ 
is well defined and continuous from $W^{s,p}(P_k)$ into the product space $\prod_{j=0}^J W^{s-\frac1p,p}(B_j)$.

We now describe the compatibility conditions between traces at the corners.

When $s=1$, additional conditions arise depending on the value of $p$. Let $j,l\in\{0,\dots,J\}$, and assume that the edges $B_j$ and $B_l$ meet at a corner.  
Let
\[
\mathfrak v \in B_j \cap B_l
\]
be this corner point, which we call a \emph{vertex}.  
For $\sigma>0$ sufficiently small, let
\[
\mathfrak v - \sigma \in B_j, \qquad \mathfrak v + \sigma \in B_l
\]
denote the points obtained by moving a distance $\sigma$ away from the vertex along $B_j$ and $B_l$, respectively.  
Then the following conditions hold:

\begin{equation*}
\begin{aligned}
    u_{kj}(\mathfrak v) = u_{kl}(\mathfrak v) 
    &\qquad \text{when } p>2, \\[0.4em]
    \int_0^{\delta} \frac{1}{\sigma}\, 
        \bigl|u_{kj}(\mathfrak v - \sigma) - u_{kl}(\mathfrak v + \sigma)\bigr|^2 \, d\sigma < \infty
    &\qquad \text{when } p=2.
\end{aligned}
\end{equation*}

No compatibility condition is required when $1 < p < 2$.  
The difficulty at the endpoint $p=2$ comes from the fact that $W^{1-1/p,p}(B_j)=W^{1/2,2}(B_j)$ is exactly the threshold at which functions may fail to possess a continuous representative.

For $u\in H^2(P_k)$ and $v\in H^1(P_k)$, we have the following (half)-Green's formula on the page $P_k$ (see \cite[Lemma 1.5.3.8]{Gr11}):
\[
\int_{P_k}(\Delta u) vdx=-\int_{P_k}\nabla u\cdot\nabla v dx+\sum_{j=0,\dots,J}\int_{B_j} \eval{\frac{\partial u}{\partial \nu_j}}_{B_j}v_{|B_j}d\sigma.
\]

To analyze variational problems on the open books, we should specify how our pages are connected, i.e., we specify compatibility conditions for the functions at the bindings. It is natural to require functions to coincide on the bindings. We will be working with $H^1(\Lib)$ functions, which are not continuous, nor even pointwise defined. The fact that they coincide at the bindings is understood in the sense of traces. We introduce the notation $H^1_D(\Lib)$ (where $D$ stands for Dirichlet) for the set of $H^1(\Lib)$ functions which coincides at the bindings, i.e.
\[
H^1_D(\Lib)=
\left\{
u\in H^1(\Lib):{u_P}_{|B}={u_{P'}}_{|B},\text{ for all }B\in\mathcal{B},\text{ for all }P,P'\sim B
\right\}.
\]
We define the quadratic form $Q:L^2(\Lib)\to \mathbb R$ with domain $H^1_D(\Lib)$ by 
\[
Q(u)=\int_{\Lib}|\nabla u|^2dx=\sum_{k\in K}\int_{P_k}|\nabla u_k|^2dx.
\]
The quadratic form $Q$ is non-negative and there exists a unique self-adjoint operator $H:D(H)\subset L^2(\Lib)\to L^2(\Lib)$ such that for any $u\in D(H)$ and $v\in H^1_D(\Lib)$ we have 
\begin{equation}
\label{eq:Huv=Quv}    
\scalar{Hu}{v}_{L^2(\Lib)}=Q(u,v),
\end{equation}
where, by abuse of notation, we have also denoted by $Q$ the associated bilinear form ($Q(u,v)=Q(u+v)/2-Q(u)-Q(v)$). By definition of $H$, we have 
\[
Hu_k=-\Delta u_k,
\]
for $u=(u_k)_{k\in K}\in D(H)$, hence $D(H)\subset H^2(\Lib)\cap H^1_D(\Lib)$. Here, we have implicitly used the fact that $\{u\in H^1_D(\Lib):\Delta u\in L^2(\Lib)\}=H^2(\Lib)\cap H^1_D(\Lib)$, see \cite[Chapter 3]{Gr11} for the case of a regular domain, and \cite[Remark 3.2.4.6]{Gr11} for the case of a domain with polygonal boundary. Moreover, functions in $D(H)$ should satisfy the following binding conditions. Let $B$ be a binding and $(P_k)$ be the pages incident to the binding $B$. Any function $u\in D(H)$ verifies for any $x\in B$ the condition
\begin{equation}
\label{eq:binding-condition}    
\sum_k \eval{\frac{\partial u_k}{\partial \nu_k}}_{B}(x)=0. 
\end{equation}
Indeed, let $v\in H^1_D(\Lib)$ be supported on the pages $(P_k)$ incident to $B$ and let 
 $u\in D(H)$. By Green's formula, we have 
\begin{multline*}  
\scalar{Hu}{v}_{L^2(\Lib)}=-\Re\int_{\Lib}\Delta u \bar v dx
=-\sum_k\Re\int_{P_k}\Delta u_k \bar v_k dx
\\
=\sum_k\Re\int_{P_k}\nabla u_k\nabla \bar v_kdx-\Re\int_B\sum_k\eval{\frac{\partial u_k}{\partial \nu_k}}_{B}\bar v_{|B} d\sigma.
\end{multline*}
Since $H$ should verify \eqref{eq:Huv=Quv} and $v$ is arbitrary, this implies \eqref{eq:binding-condition}.

In summary, the domain of $H$ is given by
\begin{equation}
\label{eq:domain}    
D(H)=\left\{u\in H^2(\Lib)\cap H^1_D(\Lib):u\text{ verifies \eqref{eq:binding-condition} for each binding }B\in\mathcal{B} \right\}.
\end{equation}
Observe that the binding conditions verified by functions on the domain of $H$ are reminiscent of Kirchhoff-Neumann conditions in the context of quantum graphs. The results presented in the present paper could be generalized to more generic functionals, or, equivalently, more generic binding conditions. For example, one may introduce a Dirac-type condition, as in the case of the fractured strip studied in \cite{LeSh24}.

\subsection{Notation}
\label{sec:notation}

For a book $\Lib=(\mathcal{P},\mathcal{B})$, we define the following lower bound on the binding lengths:  
\begin{equation} \label{eqMinLength}
    L_{\Lib} = \frac{1}{2}  \min\left(\min_{B\in\mathcal{B}} l_B, 1\right),
\end{equation}
where by $l_B$ we denote the length of the binding $B$. This quantity is well defined, and we always have $L_{\Lib}\leq 1/2$. If the number of bindings is finite, then $L_{\Lib} >0$. 

We denote by $\omega_\Lib$ the infimum of the spectrum of $H$, which is given by 
\begin{equation}
    \label{eqSpBottom}
    \omega_{\Lib} = \inf_{u \in D(Q)} \frac{Q(u)}{\| u \|_{L^2(\Lib)}^2}. 
\end{equation}
Since we assumed Kirchhoff-type conditions at the bindings, we have $\omega_\Lib = 0$. In the sequel, we chose to keep the notation $\omega_\Lib$, as most of our statements would be valid for operators with more generic boundary conditions (and thus potentially non-zero $\omega_\Lib$). 

For any $u \in H^1_D(\Lib)\setminus\{0\}$ and $\omega> -\omega_\Lib$, we define
    \begin{equation}\label{eqPiOmega}
        \pi_{\omega}(u) =  \left( 1 + \frac{I_{\omega}(u)}{\| u \|_{L^{p+1}(\Lib)}^{p+1}} \right)^{\frac{1}{p-1}}.
    \end{equation}
It is the scaling factor used to shift $u$ on the Nehari manifold, i.e., $I_\omega(\pi_{\omega}(u) u) = 0$.

\subsection{Metric structure}
\label{sec:metric}
Open books can be endowed with a metric structure.  

The distance between two points lying within the same page is simply the Euclidean distance inherited from the page. This includes points lying on a binding, whose distance with respect to any point belonging to the pages incident to the binding is therefore defined. This is sufficient to define a notion of continuity of a curve. To define the distance between two points belonging to different pages not sharing a common binding, we proceed in the following way.

Let $\Lib$ be a book. Let $x,y\in\mathcal{L}$. 
A path $\gamma$ on $\mathcal{L}$ between $x$ and $y$ is a continuous piecewise  $C^1$ application $\gamma:[0,1]\to\mathcal{L}$ such that $\gamma(0)=x$ and $\gamma(1)=y$. 
The length of the path on $\mathcal{L}$ is the sum of the lengths of $\gamma([0,1])$ restricted to each of the pages of $\Lib$ (counting the length on common bindings only once). 
We denote it by $\ell_\gamma$. The distance between $x$ and $y$ on $\mathcal{L}$ is then defined by
\[
d(x,y)=\inf\left\{\ell_\gamma:\gamma\in PC^1([0,1];\Lib),\,\gamma(0)=x,\,\gamma(1)=y\right\}.
\]

A book $\mathcal{L}$ is said to be \emph{connected} if each binding has at least one incident page, and if for any two pages $P, P'\in\mathcal{P}$, there exist a sequence of pages $(P_j)_{j=0,\dots, J+1}$ and of bindings $(B_j)_{j=0,\dots, J}$ such that $P_0=P$, $P_{J+1}=P'$ and $P_j, P_{j+1}\sim B_j$. 
In this paper, all books will be assumed to be connected. 

We say that a book is \emph{finite} if it has a finite number of pages and bindings. 

We say that a book is \emph{compact} if it is finite and each of the bindings has a finite length. Otherwise, we say that the book is \emph{non-compact}. When a book is finite but non-compact, we define its \emph{compact core} as the sub-book built with the collection of pages such that the associated bindings all have a finite length. 

\subsection{Exponential decay}
\label{sec:decay}

In this section, we show that the solutions of \eqref{eq:snls} are exponentially decaying. 

One of the main references for exponential decay in elliptic linear equations is by Agmon \cite{Ag82}.
In our case, we do not really need to have a precise estimate on the decay rate of solutions to \eqref{eq:snls}, and we can settle for a slightly weaker estimate (still giving an exponential decay rate, though not the optimal one). We follow the strategy of proof of \cite[Theorem 3.2]{BeSh91}. 

\begin{proposition}
    Let $\Lib$ be a book and let $u\in H^1_D(\Lib)$ be a solution of \eqref{eq:snls}. For any page $P_k\in\mathcal{P}$ such that $L_k^1=\infty$ (resp. $L_k^2=\infty$), there exists $M_k>0$ such that for any $(x,y)\in[0,L_k^1]\times [0,L_k^2]$ the component $u_k$ of $u$ on $P_k$ verifies 
    \[
    |u_k(x,y)|\leq M_ke^{-\frac{\sqrt{\omega}}{2}|x|} \quad (\text{resp. } |u_k(x,y)|\leq M_ke^{-\frac{\sqrt{\omega}}{2}|y|}).
    \]
    \end{proposition}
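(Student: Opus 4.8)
The plan is to prove the decay on a semi-infinite page $P_k$ with $L_k^1 = \infty$ by a Gagliardo–Nirenberg/comparison argument adapted from \cite[Theorem 3.2]{BeSh91}. The essential idea is that far out along the infinite direction, the nonlinear term $|u|^{p-1}u$ is negligible (because $u$ is small there), so the equation behaves like the linear equation $-\Delta u_k + \omega u_k \approx 0$, whose solutions decay like $e^{-\sqrt\omega\, |x|}$. Since the constant $\tfrac{\sqrt\omega}{2}$ in the statement is strictly less than the optimal rate $\sqrt\omega$, we have room to spare and need not track constants sharply.

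First I would establish the necessary preliminary facts. Since $u \in H^1_D(\Lib)$ solves \eqref{eq:snls}, elliptic regularity on each page (using \cite{Gr11} for the polygonal boundary) gives $u_k \in H^2(P_k)$, and a bootstrap argument upgrades this to boundedness and continuity of $u_k$ on $P_k$, in particular giving $u_k \in L^\infty$. The crucial input is that $u_k \in H^1(P_k)$ on the semi-infinite strip implies $\int_{P_k}(|\nabla u_k|^2 + |u_k|^2)\,dx < \infty$, which forces the $L^2$-mass on the slab $[n,n+1]\times[0,L_k^2]$ to tend to $0$ as $n\to\infty$; combined with interior/boundary elliptic estimates this yields $\|u_k\|_{L^\infty([n,\infty)\times[0,L_k^2])} \to 0$ as $n\to\infty$. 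Hence there exists $x_0 \ge 0$ such that $|u_k(x,y)|^{p-1} \le \tfrac{\omega}{2}$ for all $x \ge x_0$.

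Next I would run the comparison argument. On the region $x \ge x_0$, write the equation as $-\Delta u_k + \bigl(\omega - |u_k|^{p-1}\bigr)u_k = 0$, where the effective potential satisfies $\omega - |u_k|^{p-1} \ge \tfrac{\omega}{2} > 0$. Following \cite{BeSh91}, I would test against a suitable weight: multiply the equation by $e^{\alpha x}\bar u_k$ (with $\alpha = \tfrac{\sqrt\omega}{2}$, or slightly less) and integrate over the truncated slab $[x_0,R]\times[0,L_k^2]$, integrating by parts. The boundary terms on $y=0$ and $y=L_k^2$ vanish or are controlled by the binding/Neumann conditions, and the terms at $x=R$ are handled by letting $R\to\infty$ using the $H^1$-integrability. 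The resulting energy inequality shows that $\int e^{\alpha x}(|\nabla u_k|^2 + |u_k|^2)\,dx < \infty$, i.e. $e^{\alpha x/2} u_k \in H^1$ on the far slab. Finally, converting this weighted $H^1$-bound into a pointwise bound via the Gagliardo–Nirenberg (or Sobolev/Agmon) inequality on each unit slab $[x,x+1]\times[0,L_k^2]$ yields the claimed pointwise estimate $|u_k(x,y)| \le M_k e^{-\tfrac{\sqrt\omega}{2}|x|}$, with the constant absorbed by adjusting $M_k$ to also cover the compact region $x \le x_0$ where $u_k$ is bounded.

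I expect the main obstacle to be the careful treatment of the boundary and corner contributions when integrating by parts on the polygonal page: one must ensure that the trace terms on the finite edges $y \in \{0, L_k^2\}$ and the transverse binding at $x=0$ do not spoil the weighted energy estimate, and that the weight $e^{\alpha x}$ interacts correctly with the integration by parts near the corners. The $H^2$-regularity on polygonal domains from \cite{Gr11} and the (half-)Green's formula stated in the functional setting are exactly the tools needed here; the argument is essentially local in the far region, so the global geometry of the book plays no role, and only the single infinite page $P_k$ matters.
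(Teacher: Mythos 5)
Your outline is workable, but it is a genuinely different argument from the paper's, and as written it does not deliver the stated rate. The paper follows a pointwise maximum-principle route: it sets $\psi=|u_k|^2-Me^{-\sqrt{\omega}\,x}$, notes that far enough out $-\Delta|u_k|^2+\omega|u_k|^2\le 0$ (because $|u_k|$ is small there), excludes a positive maximum on the lateral sides $y\in\{0,L_k^2\}$ by even reflection, and sends the right end of a truncated rectangle to infinity; the rate $\tfrac{\sqrt\omega}{2}$ in the statement is precisely the square root of the comparison bound $|u_k|^2\le Me^{-\sqrt\omega x}$. Your weighted-energy (Agmon) route replaces all of this by integral estimates; it is more robust (no maximum principle), and your preliminary step --- slab-mass decay plus elliptic estimates giving $\|u_k\|_{L^\infty([n,\infty)\times[0,L_k^2])}\to 0$ --- is actually argued, whereas the paper merely asserts $\lim_{x\to\infty}u_k=0$. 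Note also that both proofs share the same quiet assumption on the lateral bindings: Neumann data for $u_k$ alone holds there only when those infinite bindings are incident to no other page; otherwise one must work on the union of the incident pages and invoke the Kirchhoff cancellation, so the argument is not purely local to $P_k$.

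Three concrete repairs are needed in your version. First, the exponent bookkeeping is off by a factor of two: a weighted bound $\int e^{\alpha x}\bigl(|\nabla u_k|^2+|u_k|^2\bigr)\,dx\,dy<\infty$ gives $\|u_k\|_{H^1([n,n+1]\times[0,L_k^2])}\lesssim e^{-\alpha n/2}$, hence pointwise decay at rate $\alpha/2$; with your choice $\alpha=\tfrac{\sqrt\omega}{2}$ this is $e^{-\sqrt\omega x/4}$, which falls short of the claim. The fix is internal to your own setup: on the far region the effective potential is $\ge\omega/2$, so the form $|\partial_x u|^2-\alpha|\partial_x u||u|+\tfrac{\omega}{2}|u|^2$ is coercive for every $\alpha<\sqrt{2\omega}$, and taking $\alpha=\sqrt\omega$ yields exactly the rate $\tfrac{\sqrt\omega}{2}$. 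Second, in two dimensions $H^1$ on a unit slab does not control $L^\infty$, so Gagliardo--Nirenberg alone cannot produce the pointwise bound; you need one more elliptic step (the equation and the global $L^\infty$ bound give $\|u_k\|_{H^2}\lesssim\|u_k\|_{H^1}$ on slightly smaller slabs), followed by $H^2\hookrightarrow L^\infty$. Third, the boundary term at $x=R$, namely $e^{\alpha R}\int_0^{L_k^2}\partial_x u_k(R,y)\,\bar u_k(R,y)\,dy$, does not vanish as $R\to\infty$ merely because $u_k\in H^1$: the exponential can beat the decay of the slab quantities. The standard cure is Agmon's device of bounded truncated weights, e.g.\ $e^{\alpha\min(x,R)}$, integrated over the whole half-strip, with estimates uniform in $R$, concluding by monotone convergence. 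With these corrections your proof closes, and in fact gives any rate below the optimal $\sqrt\omega$, which is stronger than the paper's statement.
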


\begin{proof}
Let $u\in H^1_D(\Lib)$ be a solution of \eqref{eq:snls}. Consider a page $P_k\sim[0,L_k^1]\times [0,L_k^2]$ of $\Lib$ and assume that $L_k^1=\infty$ (the case $L_k^2=\infty$ being perfectly similar). Let $u_k:P_k\to\R$ be the component of $u$ on $P_k$.  
Define $\psi:[0,\infty)\times [0,L_k^2]\to \R$ by
\[
\psi=|u_k|^2-Mg_0,\quad g_0(x,y)=e^{-\sqrt{\omega}x},
\]
where $M>0$ is a constant to be chosen and $g_0$ has been chosen so that on $[0,\infty)\times [0,L_k^2]$ it verifies 
\[
-\Delta g_0+\omega g_0=0.
\] 
Let $R>0$ to be chosen large enough later, and fix $M$ large enough so that when $x=R$ we have 
\[
\psi(R,y)<0.
\]
We will prove that in fact, for any $x>R$ and $y\in[0,L_k^2]$, we have
\[
\psi(x,y)<0,
\]
thereby proving the claim. On the strip $(0,\infty)\times (0,L_k^2)$, $u_k$ verifies
\[
-\Delta u_k+\omega u_k-|u_k|^{p-1}u_k=0.
\]
Therefore, $u_k\in\mathcal{C}^2((0,\infty)\times (0,L_k^2))\cap H^2((0,\infty)\times (0,L_k^2))$ and  $|u_k|^2$ verifies 
\[
-\Delta |u_k|^2 = -2 \Re\left(u_k\Delta \bar u_k+|\nabla u_k|^2\right)
= -2 \left(\omega |u_k|^2-|u_k|^{p+1}+|\nabla u_k|^2\right).
\]
We rewrite this equation in the form 
\begin{equation}
\label{eq:take-R-large}    
-\Delta |u_k|^2 +\omega |u_k|^2
= (-\omega+2|u_k|^{p-1}) |u_k|^2-2|\nabla u_k|^2,
\end{equation}
in such a way that the right-hand side is negative for small $|u_k|$. Since $\lim_{x\to \infty}u_k(x,y)=0$, we may choose $R$ large enough so that the right hand side of \eqref{eq:take-R-large} is negative on $[R,\infty)\times [0,L_k^2]$. 

By construction, the function $\psi$ also verifies
\[
-\Delta \psi +\omega \psi
= (-\omega+2|u_k|^{p-1}) |u_k|^2-2|\nabla u_k|^2.
\]
Recall that, from the maximum principle, if $-\Delta\psi+\omega\psi \leq 0$ on a domain $\Omega$, then $\psi$ cannot have a positive maximum in $\Omega$. Let $\rho>R$ and $\Omega =[R,\rho]\times [0,L_k^2]$. 
By the maximum principle, $\psi$ can achieve a positive maximum only on $\partial \Omega$. By construction, it cannot be on the part $\{R\}\times [0, L_k^2]$ of $\partial \Omega$. Moreover, it also cannot be on $[R,\rho]\times\{0 ,L_k^2\}$. Indeed, assume by contradiction that $\psi$ achieves a positive maximum at $(x,0)$ for $x\in(R,\rho)$ and consider the symmetrized function $\tilde \psi$ given by $\psi(x,y)$ for  $(x,y)\in\Omega$ and $\psi(x,-y)$ for $(x,y)\in[R,\rho]\times[-L_k^2,0]$. Then $\tilde\psi$ 
also verifies $-\Delta\psi+\omega\psi\leq 0$ on $[R,\rho]\times[-L_k^2,L_k^2]$ and achieves a positive maximum at the interior point $(x,0)$, which is a contradiction. Therefore, $\psi$ can achieve a positive maximum only on $\{\rho\}\times [0,L_k^2]$.
Define $m(\rho)$ by 
\[
m(\rho)=\max_{y\in [0,L_k^2]}|\psi(\rho,y)|.
\]
Then any $(x,y)\in \Omega$ we have 
\[
\psi(x,y)\leq m(\rho).
\]
As $\rho\to \infty$, we have $m(\rho)\to 0$, therefore on $[R,\infty)\times [0,L_k^2]$ we have
\[
\psi(x,y)\leq 0.
\]
As a consequence, for any $(x,y)\in[R,\infty)\times [0,L_k^2]$ we have 
\[
|u_k(x,y)|^2\leq Mg_0(x) = Me^{-\sqrt{\omega}|x|},
\]
which is the desired result.
\end{proof}

\subsection{Examples}
\label{sec:examples}

We now present several examples of open books.
Note that while we might visually represent the open-book structures as submanifolds of the space $\mathbb R^3$,  the geometry of the representation is not taken into account in the open-book object: a curved binding or page is identical to a straight one (in the same way that in the representation of metric graphs, one can use either curved or straight edges for the sake of visualization to represent the same structure). 

The notion of open books allows for a wide variety of constructions. Our primary motivation for the introduction of this notion was to study how quantum graphs, which are $1$-$d$ structures, could be used as approximations of more complicated $2$-$d$ structures. For any given quantum graph, it turns out to be elementary to consider its open book equivalent, simply by giving a dimension to the vertices and edges, i.e. by considering the product space $\GraphTimesZeroL$. More precisely, given a graph $\Graph$ formed by edges $e\in\mathcal{E}$ of length $l_e$ and vertices $v\in\mathcal{V}$, we construct an open book $\Lib$ as follows. Let $L>0$. Given an edge $e\sim [0,l_e]$  attached at two vertices $v_1$ and $v_2$ (at respectively $0$ and $l_e$), we define a page $P_e$ as $P_e\sim [0,l_e]\times [0,L]$ and binding $(B_j)_{j=0,1,2,3}$ as $B_1,B_3\sim [0,L]$, $B_0,B_2\sim [0,l_e]$, thereby forming the boundary of $P_e$ in such a way that the vertices $v_1$ and $v_2$ become respectively the bindings $B_3$, $B_1$. This process can be extended, mutatis mutandis, for edges of infinite length or loop edges. Obviously, there would be many other ways to construct an open book starting from a given quantum graph.

Our first example is the one whose graphical representation justifies the name \emph{open books}, and can be thought of as a generalized star graph. We give ourselves a binding $B_0 \sim[0,L]$, and for $k=0,\dots,K$ we define bindings $B_1^{k},B_2^{k}  \sim[0,\infty)$. Then the $K$ pages attached to the common binding $B_0$ are isomorphic to $[0,L]\times [0,\infty)$ and their boundary are formed with the bindings in the  following way
\[
\partial P_k = B_1^k\cup B_0 \cup B_2^k.
\]
An open book with three pages is represented in Figure \ref{fig:classical-graphs} (left).
\begin{figure}[htpb!]
    \centering
\includegraphics[width=0.7\linewidth]{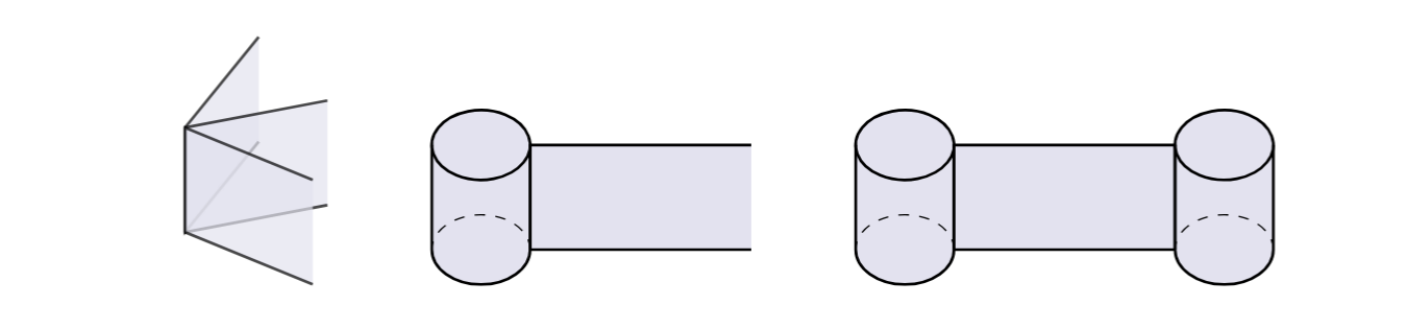}
    \caption{Open book versions of classical graphs: star-graph, tadpole and dumbbell (from left to right)}
    \label{fig:classical-graphs}
\end{figure}

Following the same procedure, we construct a generalized tadpole. Take bindings $B_0  \sim[0,L]$, $B_1^{\pm} = [0,\infty)$, $B_2^{\pm} = [0,2\pi]$, and let  two pages $P_1$ and $P_2$ be such that $P_1$ is isomorphic to $[0,L]\times [0,\infty)$ and $P_2$ is isomorphic to  $[0,L]\times [0,2\pi]$, and their boundaries are described in the following way. For $P_1$ we have 
\[
[0,L]\times\{0\} \sim B_0,
\quad
\{0\}\times [0,\infty) \sim B_1^-,
\quad
\{L\}\times [0,\infty) \sim B_1^+
\]
and for $P_2$ we have
\[
[0,L]\times\{0\} \sim [0,L]\times\{2\pi\}\sim B_0,\quad \{0\}\times[0,2\pi]\sim B_2^-,\quad \{L\}\times[0,2\pi]\sim B_2^+.
\]
A generalized tapdole is represented in Figure \ref{fig:classical-graphs} (middle). 

The procedure can be repeated for any example of a quantum graph. 
We have included in Figure \ref{fig:classical-graphs} (right) the open-book version of the dumbbell graph. Figure \ref{fig:grid-torus} presents the open book version of the grid. This example served as a base for the quantum graph approach to metamaterial design presented in \cite{LaTaCh22}.

\begin{figure}
    \centering
\includegraphics[width=0.7\linewidth]{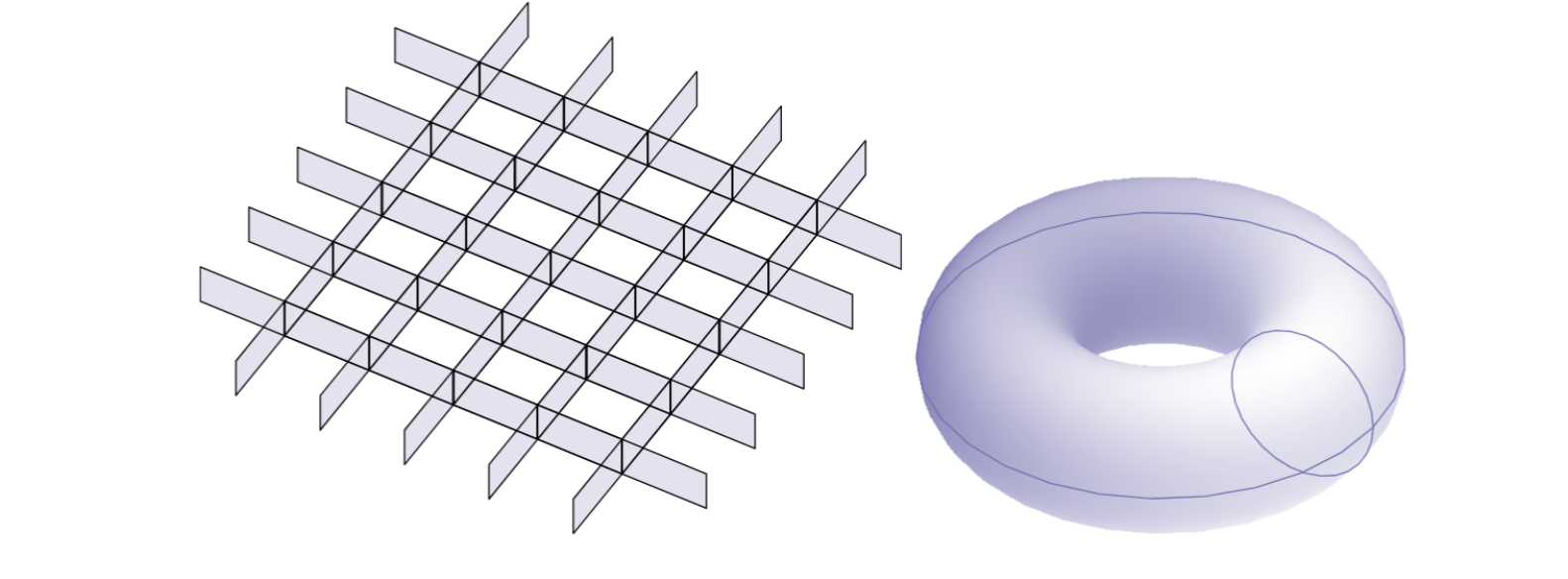}
    \caption{The open-book versions of the 2-d grid and the torus}
    \label{fig:grid-torus}
\end{figure}

Not all open books can be thought of as extensions of metric graphs. For example, there is no natural way to obtain the open book of Figure \ref{fig:proto} from a graph. 

We present a last example of an open book: the torus. It is constructed from a single page $P_1$ for which the boundary bindings are two by two identical, i.e., $P_1^N = P_1^S = B_0$ and $P_1^W = P_1^E = B_1$. The torus open book with solid lines for the bindings is represented on Figure \ref{fig:grid-torus}.

\section{Existence of an action minimizer}\label{secExistence}

In this section, we prove the  existence of an action minimizer on finite non-compact books and on periodic  books. We start by reformulating the minimization problem into an equivalent problem more amenable to analysis.

\begin{lemma}\label{lemSEquiv}
Let $\Lib$ be a book.
Assume that $\omega> -\omega_\Lib$. The minimization problem \eqref{eqSDefIntro} is equivalent to 
    \begin{equation}
        \label{eqSDefEquiv}
        s_\omega = c_p \inf\left\{\| u \|_{L^{p+1}(\Lib)}^{p+1}, \, u \in H^1_D(\Lib) \setminus\{0\}, \, I_{\omega}(u)\leq 0 \right\},
    \end{equation}
    where 
    \begin{equation}
        \label{eqDefCp}
        c_p = \frac{p-1}{2(p+1)}.
    \end{equation}
\end{lemma}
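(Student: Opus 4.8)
The plan is to exploit the elementary observation that the action reduces to a fixed multiple of the $L^{p+1}$-norm on the Nehari manifold, and then to use the fiber scaling $\pi_\omega$ to relax the constraint $I_\omega(u)=0$ to the inequality $I_\omega(u)\le0$ without changing the infimum.

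First I would record the identity on the Nehari manifold. If $I_\omega(u)=0$, then $\norm{\nabla u}_{L^2(\Lib)}^2+\omega\norm{u}_{L^2(\Lib)}^2=\norm{u}_{L^{p+1}(\Lib)}^{p+1}$, and inserting this into $S_\omega$ collapses the kinetic and mass terms:
\[
S_\omega(u)=\Bigl(\tfrac12-\tfrac1{p+1}\Bigr)\norm{u}_{L^{p+1}(\Lib)}^{p+1}=c_p\,\norm{u}_{L^{p+1}(\Lib)}^{p+1}.
\]
Consequently $\tilde s_\omega=c_p\inf\{\norm{u}_{L^{p+1}(\Lib)}^{p+1}:u\neq0,\ I_\omega(u)=0\}$, and it remains only to show that enlarging the constraint set from $\{I_\omega(u)=0\}$ to $\{I_\omega(u)\le0\}$ leaves this infimum unchanged.

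The inclusion $\{I_\omega=0\}\subset\{I_\omega\le0\}$ yields one inequality between the two infima for free. For the reverse, I would invoke the scaling factor $\pi_\omega$ from \eqref{eqPiOmega}: for any $u\neq0$ the function $v=\pi_\omega(u)\,u$ satisfies $I_\omega(v)=0$ and
\[
\norm{v}_{L^{p+1}(\Lib)}^{p+1}=\pi_\omega(u)^{p+1}\,\norm{u}_{L^{p+1}(\Lib)}^{p+1}.
\]
When $I_\omega(u)\le0$, the definition of $\pi_\omega$ forces $\pi_\omega(u)\le1$, hence $\norm{v}_{L^{p+1}(\Lib)}\le\norm{u}_{L^{p+1}(\Lib)}$; thus every competitor on the relaxed set can be projected onto the Nehari manifold without increasing its $L^{p+1}$-norm, which gives the reverse inequality. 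Combining the two yields equality of the infima, so $\tilde s_\omega=s_\omega$. The same projection shows, moreover, that any minimizer of the relaxed problem must already satisfy $I_\omega(u)=0$ (otherwise $v$ would strictly lower the $L^{p+1}$-norm), so the two problems share the same minimizers.

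The only genuinely delicate point is to confirm that $\pi_\omega(u)$ is a well-defined positive real number for every nonzero $u$, i.e.\ that the base $1+I_\omega(u)/\norm{u}_{L^{p+1}(\Lib)}^{p+1}$ in \eqref{eqPiOmega} is strictly positive. This reduces to $\norm{\nabla u}_{L^2(\Lib)}^2+\omega\norm{u}_{L^2(\Lib)}^2>0$, which I would deduce from the spectral bound $Q(u)\ge\omega_\Lib\norm{u}_{L^2(\Lib)}^2$ implicit in \eqref{eqSpBottom} together with the standing hypothesis $\omega>-\omega_\Lib$. Once this positivity is secured, the whole argument is a short chain of elementary manipulations with no analytic obstruction.
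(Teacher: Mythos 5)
Your proposal is correct and follows essentially the same route as the paper's proof: reduce the action to $c_p\norm{u}_{L^{p+1}(\Lib)}^{p+1}$ on the Nehari manifold, then use the projection $\pi_\omega$ (with $\pi_\omega(u)\le 1$ when $I_\omega(u)\le 0$) to show the relaxed constraint does not lower the infimum. Your additional verification that $\pi_\omega(u)$ is well defined via $\norm{\nabla u}_{L^2(\Lib)}^2+\omega\norm{u}_{L^2(\Lib)}^2\geq(\omega+\omega_\Lib)\norm{u}_{L^2(\Lib)}^2>0$ is a detail the paper leaves implicit, and is a welcome precision rather than a departure.
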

\begin{proof}
    Notice that 
    \begin{equation*}
        c_p\| u \|_{L^{p+1}(\Lib)}^{p+1} = S_\omega(u) - \frac12I_\omega(u),
    \end{equation*}
    therefore problem \eqref{eqSDefIntro} is equivalent to 
    \begin{equation*}        
         \hat{s}_\omega = c_p \inf\left\{\| u \|_{L^{p+1}(\Lib)}^{p+1}, \, u \in H^1_D(\Lib) \setminus\{0\}, \, I_{\omega}(u) = 0 \right\}.
    \end{equation*}
        On the one hand, we clearly have $s_\omega \leq \hat{s}_\omega $. On the other hand, suppose that $u\in H^1_D(\Lib)\setminus\{0\}$ verifies  $I_\omega(u) < 0$. By definition of $\pi_{\omega}(u)$ (see \eqref{eqPiOmega}), we have 
        \[
        I_\omega(\pi_{\omega}(u) u)  = 0
        \]
        while, since $I_\omega( u)<0$, we have $\pi_{\omega}(u)<1$ and therefore 
        \[
        \| \pi_{\omega}(u) u \|_{L^{p+1}(\Lib)}^{p+1} < \| u \|_{L^{p+1}(\Lib)}^{p+1}.
        \]
        This implies that $s_\omega \geq \hat{s}_\omega $. Since the reverse inequality is also true, this implies that $s_\omega =\hat{s}_\omega $ and therefore, as stated, $s_\omega$ is equivalent to $\tilde{s}_\omega $.
\end{proof}

We then show that $\omega \geq - \omega_\Lib$ is a necessary condition for the existence of non-trivial action ground states.

\begin{lemma}
\label{lem:omega_less_omega_L}
Let $\Lib$ be a book.
    If $\omega < - \omega_\Lib$ then $s_\omega=0$ and a non-trivial minimizer to \eqref{eqSDefEquiv} does not exist.
\end{lemma}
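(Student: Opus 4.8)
The plan is to exhibit, for $\omega < -\omega_\Lib$, a family of admissible test functions whose $L^{p+1}$-norm tends to zero, forcing $s_\omega=0$; the non-existence of a non-trivial minimizer will then be immediate, since such a minimizer would necessarily have vanishing $L^{p+1}$-norm.

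First I would exploit the variational characterization \eqref{eqSpBottom} of $\omega_\Lib$. The hypothesis $\omega<-\omega_\Lib$ reads $-\omega>\omega_\Lib=\inf_{u\in D(Q)}Q(u)/\norm{u}_{L^2(\Lib)}^2$, so by definition of the infimum there exists $u_0\in H^1_D(\Lib)\setminus\{0\}$ with $Q(u_0)/\norm{u_0}_{L^2(\Lib)}^2<-\omega$, equivalently
\[
\norm{\nabla u_0}_{L^2(\Lib)}^2+\omega\norm{u_0}_{L^2(\Lib)}^2<0.
\]
Set $a=-\bigl(\norm{\nabla u_0}_{L^2(\Lib)}^2+\omega\norm{u_0}_{L^2(\Lib)}^2\bigr)>0$.

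Next I would test the constraint in \eqref{eqSDefEquiv} along the rescaled family $\lambda u_0$ for $\lambda>0$. By homogeneity of $I_\omega$,
\[
I_\omega(\lambda u_0)=\lambda^2\bigl(\norm{\nabla u_0}_{L^2(\Lib)}^2+\omega\norm{u_0}_{L^2(\Lib)}^2\bigr)-\lambda^{p+1}\norm{u_0}_{L^{p+1}(\Lib)}^{p+1}=-a\lambda^2-\lambda^{p+1}\norm{u_0}_{L^{p+1}(\Lib)}^{p+1}.
\]
Since $p>1$, both terms are strictly negative for every $\lambda>0$, so $I_\omega(\lambda u_0)<0$ and $\lambda u_0$ is admissible. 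As $\norm{\lambda u_0}_{L^{p+1}(\Lib)}^{p+1}=\lambda^{p+1}\norm{u_0}_{L^{p+1}(\Lib)}^{p+1}\to0$ when $\lambda\to0^+$, the infimum defining $s_\omega$ is bounded above by arbitrarily small numbers, giving $s_\omega\le0$; since $s_\omega$ is $c_p>0$ times an infimum of non-negative quantities, we also have $s_\omega\ge0$, whence $s_\omega=0$.

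Finally, for non-existence, I would argue by contradiction: if $u\in H^1_D(\Lib)\setminus\{0\}$ with $I_\omega(u)\le0$ realized the infimum, then $c_p\norm{u}_{L^{p+1}(\Lib)}^{p+1}=s_\omega=0$, and because $c_p>0$ this forces $\norm{u}_{L^{p+1}(\Lib)}=0$, i.e. $u\equiv0$, contradicting $u\neq0$. I do not anticipate any genuine obstacle here; the only point requiring mild care is confirming admissibility of the scaled test functions, which is automatic once the quadratic part of $I_\omega(u_0)$ has been arranged to be negative via the characterization of $\omega_\Lib$. In essence this is the standard fact that below the bottom of the linear spectrum the Nehari-type constraint can be saturated at arbitrarily small amplitude.
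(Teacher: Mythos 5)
Your proof is correct and follows essentially the same route as the paper: both use the variational characterization \eqref{eqSpBottom} of $\omega_\Lib$ to produce a test function whose quadratic form $\norm{\nabla u_0}_{L^2(\Lib)}^2+\omega\norm{u_0}_{L^2(\Lib)}^2$ is negative, then scale it by $\lambda\to 0^+$ to conclude $s_\omega=0$. Your explicit treatment of the non-existence step (a minimizer would have vanishing $L^{p+1}$-norm, contradicting non-triviality) is a minor addition that the paper leaves implicit, but the argument is the same.
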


\begin{proof}
    When $\omega \leq - \omega_\Lib$, the Nehari manifold is not bounded away from $0$; it is the key point that we are going to exploit. Assume that $\omega < - \omega_\Lib$. Then there exists $u \in H^1_D(\Lib)\setminus\{0\}$  such that 
     \[
    \norm{\nabla u}_{L^2(\Lib)}^2+\omega\norm{u}_{L^2(\Lib)}^2=\left(\frac{\norm{\nabla u}_{L^2(\Lib)}^2}{\norm{u}_{L^2(\Lib)}^2}-\omega_\Lib+(\omega+\omega_\Lib)\right)\norm{u}_{L^2(\Lib)}^2\leq 0.
    \]
Let $(\lambda_n)\in(0,\infty)$ be such that $\lambda_n\to 0$, and define $(u_n)\subset H^1_D(\Lib)\setminus\{0\}$ by $u_n=\lambda_nu$. Then $I_\omega(u_n)<0$ and $c_p\norm{u_n}_{L^{p+1}(\Lib)}^{p+1}\to 0$ as $n\to\infty$. This implies that $s_\omega=0$. 
\end{proof}

The next two Lemmas are used in Section \ref{sec:periodic} for periodic books but apply generically.

\begin{lemma}\label{lemSOmMonotone}
Let $\Lib$ be a book.
Assume that $\Lib$ is either finite or periodic. Then
   the function $\omega \mapsto s_\omega$ is $0$ for $\omega\in (-\infty,\omega_\Lib)$ and is strictly increasing for $\omega \in (\omega_\Lib, \infty)$.
\end{lemma}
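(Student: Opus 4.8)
The plan is to work throughout with the equivalent formulation \eqref{eqSDefEquiv}, in which $s_\omega = c_p \inf\{\norm{u}_{L^{p+1}(\Lib)}^{p+1} : u\in H^1_D(\Lib)\setminus\{0\},\ I_\omega(u)\le 0\}$, and to exploit the monotone dependence of $I_\omega$ on $\omega$. The vanishing statement on $(-\infty,-\omega_\Lib)$ is exactly Lemma~\ref{lem:omega_less_omega_L} (there the quadratic part of $I_\omega$ is no longer positive definite, the Nehari constraint set accumulates at $0$, and $s_\omega=0$), so the real content is the strict monotonicity on $(-\omega_\Lib,\infty)$.

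First I would record two preliminary facts. (i) For $\omega>-\omega_\Lib$ one has $s_\omega>0$: splitting $\norm{\nabla u}_{L^2(\Lib)}^2$ as $\theta\norm{\nabla u}_{L^2(\Lib)}^2+(1-\theta)Q(u)$ and using $Q(u)\ge\omega_\Lib\norm{u}_{L^2(\Lib)}^2$ from \eqref{eqSpBottom}, one gets, for $\theta>0$ small enough that the $L^2$ coefficient $(1-\theta)\omega_\Lib+\omega$ stays positive, a coercivity bound $\norm{\nabla u}_{L^2(\Lib)}^2+\omega\norm{u}_{L^2(\Lib)}^2\ge\alpha\norm{u}_{H^1(\Lib)}^2$ with $\alpha=\alpha(\omega)>0$. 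Combined with the Sobolev embedding $\norm{u}_{L^{p+1}(\Lib)}\le C\norm{u}_{H^1(\Lib)}$, the constraint $I_\omega(u)\le 0$ then forces $\norm{u}_{H^1(\Lib)}\ge(\alpha/C)^{1/(p-1)}$, so admissible functions are bounded away from $0$ in $L^{p+1}$ and $s_\omega>0$. (ii) For fixed $u\ne 0$, $\omega\mapsto I_\omega(u)$ is strictly increasing since $\norm{u}_{L^2(\Lib)}^2>0$; hence the admissible set $\{I_\omega(u)\le 0\}$ shrinks as $\omega$ grows, which already yields the non-strict monotonicity $s_{\omega_1}\le s_{\omega_2}$ for $\omega_1<\omega_2$.

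For the strict inequality, fix $-\omega_\Lib<\omega_1<\omega_2$ and take a minimizing sequence $(u_n)$ for $s_{\omega_2}$; after replacing $u_n$ by $\pi_{\omega_2}(u_n)u_n$ I may assume $I_{\omega_2}(u_n)=0$, so that $c_p\norm{u_n}_{L^{p+1}(\Lib)}^{p+1}\to s_{\omega_2}$. The identity $I_{\omega_1}(u_n)=I_{\omega_2}(u_n)+(\omega_1-\omega_2)\norm{u_n}_{L^2(\Lib)}^2=(\omega_1-\omega_2)\norm{u_n}_{L^2(\Lib)}^2<0$ shows that $u_n$ is admissible for $\omega_1$ and, through \eqref{eqPiOmega}, that the rescaling factor obeys $\pi_{\omega_1}(u_n)^{p-1}=1-(\omega_2-\omega_1)\norm{u_n}_{L^2(\Lib)}^2/\norm{u_n}_{L^{p+1}(\Lib)}^{p+1}<1$. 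Since $\pi_{\omega_1}(u_n)u_n$ lies on the Nehari manifold for $\omega_1$,
\[
s_{\omega_1}\le c_p\,\pi_{\omega_1}(u_n)^{p+1}\norm{u_n}_{L^{p+1}(\Lib)}^{p+1}.
\]
It then suffices to bound $\pi_{\omega_1}(u_n)$ away from $1$ uniformly in $n$, i.e.\ to bound the ratio $\norm{u_n}_{L^2(\Lib)}^2/\norm{u_n}_{L^{p+1}(\Lib)}^{p+1}$ below by a positive constant; letting $n\to\infty$ then gives $s_{\omega_1}\le(1-\eta)^{(p+1)/(p-1)}s_{\omega_2}<s_{\omega_2}$ for some $\eta\in(0,1)$.

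The main obstacle is precisely this uniform lower bound on $\norm{u_n}_{L^2(\Lib)}$. The upper bound on $\norm{u_n}_{L^{p+1}(\Lib)}^{p+1}$ is free, as it converges to $s_{\omega_2}/c_p$, and $(u_n)$ is bounded in $H^1$ by the coercivity estimate of step (i). For the lower bound I would invoke the Gagliardo–Nirenberg inequality on the book, $\norm{u}_{L^{p+1}(\Lib)}^{p+1}\le C\norm{u}_{H^1(\Lib)}^{p-1}\norm{u}_{L^2(\Lib)}^2$, inherited page by page from the two-dimensional inequality: if $\norm{u_n}_{L^2(\Lib)}\to 0$ along a subsequence, then boundedness of $\norm{u_n}_{H^1(\Lib)}$ would force $\norm{u_n}_{L^{p+1}(\Lib)}^{p+1}\to 0$, contradicting $s_{\omega_2}>0$. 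Hence $\liminf_n\norm{u_n}_{L^2(\Lib)}^2>0$, the ratio is bounded below, and the argument closes. The only delicate point is that the constants in the coercivity and Gagliardo–Nirenberg estimates must be uniform over $\Lib$, which is exactly where the assumption that the book is finite or periodic is used.
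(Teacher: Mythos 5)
Your proposal is correct and follows essentially the same route as the paper: both hinge on projecting a minimizing sequence for $s_{\omega_2}$ onto the Nehari manifold at $\omega_1$ via $\pi_{\omega_1}$, and on excluding $\norm{u_n}_{L^2(\Lib)}\to 0$ by combining the $H^1$ bound, the positivity of $s_{\omega_2}$, and a Gagliardo--Nirenberg/interpolation inequality (the paper interpolates $L^{p+1}$ between $L^2$ and $L^q$ with Sobolev, which is the same tool in a different guise). The only difference is organizational: you run the argument directly, obtaining a quantitative gap $s_{\omega_1}\le(1-\eta)^{(p+1)/(p-1)}s_{\omega_2}$, whereas the paper argues by contradiction from $s_{\omega_1}=s_{\omega_2}$; the substance is identical.
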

\begin{proof}
We already proved in Lemma \ref{lem:omega_less_omega_L} that the function $\omega \mapsto s_\omega$ is $0$ for $\omega <-\omega_\Lib$. For $\omega>\omega_\Lib$, we have $s_\omega>0$ as a consequence of Sobolev inequalities (we assumed that $\Lib$ is either finite or periodic to ensure the validity of Sobolev inequalities). Indeed, let $\omega>\omega_\Lib$ and $u\in H^1_D(\Lib)\setminus\{0\}$ such that $I_\omega(u)=0$. Then, by Sobolev embeddings, there exists $C>0$ independent of $u$ such that 
\[
\norm{\nabla u}_{L^2(\Lib)}^2+\omega\norm{u}_{L^2(\Lib)}^2=\norm{u}_{L^{p+1}(\Lib)}^{p+1}\leq C(\norm{\nabla u}_{L^2(\Lib)}^2+\omega\norm{u}_{L^2(\Lib)}^2)^{\frac{p+1}{2}}.
\]
Therefore, there exists $c>0$ independent of $u$  such that 
\[
c\leq \norm{\nabla u}_{L^2(\Lib)}^2+\omega\norm{u}_{L^2(\Lib)}^2.
\]
Therefore, functions on the Nehari manifold are uniformly bounded away from $0$ in $H^1(\Lib)$ and in $L^{p+1}(\Lib)$. As a consequence, we have $s_\omega>0$. 

    We now prove that the function $\omega \mapsto s_\omega$ is increasing. Let $\omega_1 < {\omega_2}$, $\omega_1,{\omega_2} \in (-\omega_{\Lib}, \infty)$. 
    Let $(u_n) \subset H^1_D(\mathcal{L)}$ be such that $I_{\omega_2}(u_n) = 0$ and $s_{\omega_2}\leq c_p\norm{u_n}_{L^{p+1}(\Lib)}^{p+1}<s_{\omega_2}+\frac1n$. Then $I_{\omega_1}(u_n)=-(\omega_2-\omega_1)\norm{u_n}_{L^2(\Lib)}^{2} < 0$, 
      $\pi_{\omega_1}(u_n) < 1$ (where $\pi_{\omega_1}$ is defined in \eqref{eqPiOmega})
    and 
    \[
    s_{\omega_1}\leq c_p\| \pi_{\omega_1}(u_n) u_n \|_{L^{p+1}(\Lib)}^{p+1} < c_p\|  u_n \|_{L^{p+1}(\Lib)}^{p+1}=s_{\omega_2}+\frac1n.
    \] 
    Passing to the limit as $n\to\infty$ leads to $s_{\omega_1}\leq s_{\omega_2}$. 
        
    We now prove that the function is strictly increasing. Assume by contradiction that  $s_{\omega_1}= s_{\omega_2}$. Then $\pi_{\omega_1}(u_n)\to1$ as $n\to\infty$, which, by definition of $\pi_{\omega_1}$ and since $s_{\omega_1}>0$, implies that $\norm{u_n}_{L^2(\Lib)}^{2}\to 0$ as $n\to\infty$. Let $q>p+1$. By interpolation, there exists $\alpha\in(0,1)$ such that  we have
    \[
    \norm{u_n}_{L^{p+1}(\Lib)}\leq C\norm{u_n}_{L^2(\Lib)}^{\alpha}\norm{u_n}_{L^q(\Lib)}^{1-\alpha}.
    \]
    From Sobolev embeddings, we have 
    \[
    \norm{u_n}_{L^q(\Lib)}\leq C\norm{u_n}_{H^1(\Lib)}.
    \]
    Moreover, since  $I_{\omega_2}(u_n)=0$ and $c_p\|  u_n \|_{L^{p+1}(\Lib)}^{p+1}\to s_{\omega_2}$  as $n\to\infty$, the sequence $(u_n)$ is bounded in $H^1(\Lib)$ and we have 
    \[
    0<s_{\omega_2}\leq C\norm{u_n}_{L^2(\Lib)}^{\alpha}\norm{u_n}_{H^1(\Lib)}^{1-\alpha}.
    \]
    Since $\norm{u_n}_{L^2(\Lib)}\to 0$ as $n\to\infty$, this gives a contradiction. Therefore, $s_{\omega_1}<s_{\omega_2}$, which concludes the proof. 
\end{proof}

The following lemma is the book version of a lemma often used in concentration compactness arguments (see \cite{Li84a,Li84b}).
\begin{lemma}\label{lemLions}
Let $\Lib$ be a book.
Assume that $\Lib$ is either finite or periodic. 
    Let $r \in (0, L_{\Lib}/2)$. Let $(u_n)$ be a sequence bounded in $H^1(\Lib)$. If 
    \begin{equation*}
        \sup_{y \in \Lib} \int_{B(y,r) } |u_n|^2 \, dx \to 0, \quad n\to \infty
    \end{equation*}
    then $u_n \to 0$ in $L^q(\Lib)$ for $2 < q < \infty$.
\end{lemma}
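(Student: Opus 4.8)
The plan is to adapt the classical Lions lemma argument (from $\mathbb{R}^d$) to the book setting, exploiting the fact that each page is locally a piece of $\mathbb{R}^2$ and that the restriction of $u_n$ to balls of radius $r<L_{\Lib}/2$ only ever meets the interior of a single page or the union of pages incident to a single binding. The core mechanism is to first prove $L^q$-smallness on each individual "patch" using a local Gagliardo–Nirenberg-type interpolation bounded by the local $H^1$-energy times a small factor coming from the uniform $L^2$-smallness hypothesis, and then to sum these local estimates using the bounded overlap of a suitable cover of $\Lib$ by such balls.

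\emph{First}, I would fix $q\in(2,\infty)$ and cover $\Lib$ by a family of balls $\{B(y_i,r)\}_i$ of radius $r<L_{\Lib}/2$ with the property that every point of $\Lib$ lies in at most a fixed finite number $N$ of the balls $B(y_i,2r)$; here the condition $r<L_{\Lib}/2$ is exactly what guarantees that no such ball can reach across two distinct bindings, so each ball lives in a controlled local geometry (either the interior of one page, or a neighborhood of a single binding where finitely many pages meet). For the finite case such a cover is immediate; for the periodic case one builds it on a fundamental domain and extends by periodicity, keeping the overlap number $N$ uniform.

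\emph{Second}, on each ball I would use the local interpolation inequality
\begin{equation*}
    \|u_n\|_{L^q(B(y_i,r))}^q \leq C \left(\sup_{y\in\Lib}\int_{B(y,r)}|u_n|^2\,dx\right)^{\frac{q-2}{2}} \|u_n\|_{H^1(B(y_i,2r))}^2,
\end{equation*}
which follows from the Gagliardo–Nirenberg inequality $\|v\|_{L^q}\leq C\|v\|_{L^2}^{\theta}\|v\|_{H^1}^{1-\theta}$ applied on a fixed reference domain (a flat disk, or a fixed cluster of pages around a binding) to which every patch is isometric. The point is that the constant $C$ depends only on $r$, $q$, and the local geometry, and can be taken uniform over all patches because there are only finitely many geometric types of patch (interior-of-page patches are all isometric; binding patches depend only on the number of incident pages, of which there are finitely many types in a finite or periodic book).

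\emph{Third}, I would sum over $i$: using the bounded overlap of the enlarged balls $B(y_i,2r)$, the sum $\sum_i \|u_n\|_{H^1(B(y_i,2r))}^2$ is controlled by $N\|u_n\|_{H^1(\Lib)}^2$, which is bounded by hypothesis. Therefore
\begin{equation*}
    \|u_n\|_{L^q(\Lib)}^q \leq \sum_i \|u_n\|_{L^q(B(y_i,r))}^q \leq C N \left(\sup_{y\in\Lib}\int_{B(y,r)}|u_n|^2\,dx\right)^{\frac{q-2}{2}} \|u_n\|_{H^1(\Lib)}^2,
\end{equation*}
and since the supremum tends to $0$ while the $H^1$-norm stays bounded and $q>2$, the right-hand side vanishes as $n\to\infty$. \emph{The main obstacle} I anticipate is the geometry near the bindings: I must verify that the local interpolation inequality really does hold uniformly on the multi-page clusters meeting at a binding, where a $H^1_D(\Lib)$ function is continuous across the binding in the trace sense but the patch is not a flat domain in $\mathbb{R}^2$. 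The cleanest way around this is to note that a neighborhood of a binding with $m$ incident pages is, up to isometry, a fixed finite union of half-disks glued along a diameter, a fixed compact object on which the standard Gagliardo–Nirenberg inequality is available by a routine Sobolev-extension argument, so the uniform constant exists; one just has to check that only finitely many such cluster-types occur, which holds precisely because $\Lib$ is finite or periodic.
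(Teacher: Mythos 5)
Your overall strategy (a local interpolation inequality on a cover with bounded overlap, with uniform constants coming from finitely many patch geometries) is the same as the paper's, and the geometric discussion is essentially fine, up to one small point: a ball of radius $r<L_{\Lib}/2$ can still meet \emph{two} bindings near a corner of a page, so vertex neighbourhoods must be added to your list of local patch types (harmless, since they also come in finitely many isometry classes). The genuine gap is the local inequality itself. You claim
\begin{equation*}
\|u_n\|_{L^q(B(y_i,r))}^q \leq C\,\Bigl(\sup_{y\in\Lib}\int_{B(y,r)}|u_n|^2\,dx\Bigr)^{\frac{q-2}{2}}\|u_n\|_{H^1(B(y_i,2r))}^2 ,
\end{equation*}
which is a Gagliardo--Nirenberg inequality $\|v\|_{L^q}\leq C\|v\|_{L^2}^{\theta}\|v\|_{H^1}^{1-\theta}$ with $\theta q=q-2$ and $(1-\theta)q=2$, i.e. $\theta=1-\tfrac2q$. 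In dimension two the admissible range is $\theta\leq\tfrac2q$, so your choice is valid only for $q\leq4$, and the inequality is \emph{false} for $q>4$: taking $u_\lambda(x)=\varphi(x/\lambda)$ with $\varphi$ a fixed bump supported in a disk inside one page, one has $\|u_\lambda\|_{L^q}^q\sim\lambda^2$, $\|u_\lambda\|_{L^2}^{q-2}\sim\lambda^{q-2}$, $\|u_\lambda\|_{H^1}^2\sim1$, and $\lambda^2\leq C\lambda^{q-2}$ fails as $\lambda\to0$ whenever $q>4$. Since the lemma is stated (and applied, with $q=p+1$ and $p>1$ arbitrary) for all $q\in(2,\infty)$, this is a real gap and not an edge case.

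The repair stays entirely within your framework. Run your argument only at $q=4$, where $1-\tfrac24=\tfrac24=\tfrac12$ is exactly the admissible exponent (the two-dimensional Ladyzhenskaya inequality), and conclude $u_n\to0$ in $L^4(\Lib)$. Then interpolate: for $2<q<4$, write $\|u_n\|_{L^q}\leq\|u_n\|_{L^2}^{1-\alpha}\|u_n\|_{L^4}^{\alpha}$ for a suitable $\alpha\in(0,1)$ and use the $L^2$-boundedness of $(u_n)$; for $q>4$, pick $s>q$ and write $\|u_n\|_{L^q}\leq\|u_n\|_{L^4}^{\alpha}\|u_n\|_{L^s}^{1-\alpha}$, where $\|u_n\|_{L^s}$ is bounded by the Sobolev embedding $H^1(\Lib)\hookrightarrow L^s(\Lib)$, valid because $\Lib$ is finite or periodic. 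For comparison, the paper distributes the exponents in the way that is admissible for every $q$, namely $\|u\|_{L^q(B)}^q\lesssim\|u\|_{L^2(B)}^2\bigl(1+\|\nabla u\|_{L^2(B)}^{q-2}\bigr)$ (i.e. $\theta=\tfrac2q$), extracting smallness from the factor $\sup_y\|u\|_{L^2(B(y,r))}^2$ rather than from a power of it; note, however, that pulling this supremum out of the sum over the cover is only immediate when $q\geq4$ (for $2<q<4$ an extra H\"older argument on the sum is needed, a detail the paper's short proof glosses over), so the $q=4$-plus-interpolation route above is arguably the cleanest complete version of either argument.
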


\begin{proof}
Observe first that we have defined a distance on books, and that the balls $B(y,r)$ are defined with respect to this distance.
    By the Gagliardo-Nirenberg inequality, we have 
    \begin{equation*}
        \| u \|_{L^q(B(y,r) )}^q \lesssim \| u \|_{L^2(B(y,r) )}^2 \left(1+ \|\nabla u \|_{L^2(B(y,r) )}^{q - 2}\right) 
    \end{equation*}
    for any $u\in H^1(\Lib)$,  $y \in \Lib$ and $q \in [2,\infty)$. Note that the above Gagliardo-Nirenberg is valid since we have assumed that the book $\Lib$ is either finite or periodic. It would be possible to relax this assumption and still have a valid Gagliardo-Nirenberg inequality, but for the sake of simplicity, we refrained from optimizing.
    By covering $\Lib$ with balls of radius $r$ in such a way that any point is contained in at most $N \geq 1$ balls,  we obtain 
    \begin{equation*}
          \| u \|_{L^q(\Lib)}^q \lesssim N \sup_{y \in \Lib} \| u \|_{L^2(B(y,r) )}^2 \left(1+ \| u \|_{H^1(\Lib)}^{q-2}\right).
    \end{equation*}
    Thus, under the assumptions of the lemma, we have $u_n \to 0$ in $L^{p}(\Lib)$. 
\end{proof}

\subsection{The problem at infinity} \label{secInfinity}

In many situations, the so-called \emph{problem at infinity} plays a special role in the analysis of the existence of minimizers, in connection with concentration-compactness arguments (see e.g. \cite{JeTa02,Li84b}). In the present setting, it is defined by 
\begin{equation}
    \label{eqSOmInfinit}
    s_\omega^\infty = \inf\left\{\liminf_{n \to \infty} 
    c_p\norm{u_n}_{L^{p+1}(\Lib)}^{p+1}
    : u_n \rightharpoonup 0 \text{ in } H^1(\Lib),u_n\not \equiv 0, I_\omega(u_n)\leq 0\right\}.
\end{equation}
Observe that it always holds that $s_{\omega} \leq s_{\omega}^\infty$. 

Given a graph $\Graph$, for notational convenience, we introduce  the Nehari functional on the graph:
\begin{equation*}
     I_{\omega,\Graph}(u) = \| \partial_x u \|_{L^2(\Graph)}^2  + \omega\| u \|_{L^2(\Graph)}^2 -  \|  u \|_{L^{p+1}(\Graph)}^{p+1},
\end{equation*}
and the corresponding minimization problem
\begin{equation}   
\label{eqSOmega_G_Min}
    s_{\omega,\Graph} = \inf\{c_p \| u \|_{L^{p+1}(\Graph)}^{p+1}: u \in H^1_D(\Graph)\setminus\{0\}, I_{\omega,\Graph}(u) \leq 0\}.
\end{equation}
We denote by $s_{\omega,\Graph}^\infty$ the level at infinity on the graph, defined equivalently as $s_\omega^\infty$ in \eqref{eqSOmInfinit}.

In this section, $\omega\in\R$ will be assumed to be such that $\omega>-\omega_\Lib\geq 0$ or $\omega>-\omega_\Graph\geq 0$, where $\Lib$ or $\Graph$ is the underlying book or graph.

We will link with the minimal action  levels on the line and on the strip of width $L$, defined by 
    \begin{align*}
                 s_{\omega}^{\mathrm{line}}& = c_p \inf \left\{ \| u \|_{L^{p+1}(\R)}^{p+1} : u \in H^1(\R) \setminus\{0\},I_{\omega}^{\mathrm{line}}(u) \leq 0 \right\}, \\
                s_{\omega}^{\mathrm{strip}_L} &= c_p \inf\left \{ \| u \|_{L^{p+1}(\R \times [0,L])}^{p+1} : u \in H^1(\R\times [0,L]) \setminus\{0\},I_{\omega}^{\mathrm{strip}_L}(u) \leq 0 \right\},
    \end{align*}
where, by $I_{\omega}^{\mathrm{line}}$ and $I_{\omega}^{\mathrm{strip}_L}$ we denote the Nehari functionals on the line and on the strip of width $L$, i.e, for $u\in H^1(\R)$,
\[
I_{\omega}^{\mathrm{line}}(u)=\norm{\partial_x u}_{L^2(\R)}^2+
\omega\norm{ u}_{L^2(\R)}^2-
\norm{ u}_{L^{p+1}(\R)}^{p+1},
\]
and   for $u\in H^1(\R\times [0,L])$,
\[
I_{\omega}^{\mathrm{strip}_L}(u)=\norm{\nabla u}_{L^2(\R\times [0,L])}^2+
\omega\norm{ u}_{L^2(\R\times [0,L])}^2-
\norm{ u}_{L^{p+1}(\R\times [0,L])}^{p+1}.
\]

We start by showing that the level of the problem at infinity for non-compact finite books is the same as the action level on a strip. Similar arguments also show that the level of the problem at infinity on non-compact finite graphs is the same as the action level on a line. The reason is that the best escaping sequences minimizing the action on the Nehari constraint for books reduce to the simple escaping of strip-ground states on a single page isomorphic to a half-strip (or on a semi-infinite edge in the case of graphs).

\begin{lemma}\label{lemInfty}
Let $\mathcal L=(\mathcal P,\mathcal B)$ be a non-compact finite book. Assume that for any page $P_k\in \mathcal P$, we have either $L_k^1<\infty$ or $L_k^2<\infty$. Then 
\[
s_{\omega}^\infty = s_{\omega}^{\mathrm{strip}_{L^\vee}},\quad L^{\vee}=\max\{L_k^j:L_k^{3-j}=\infty\}.
\]
Let $\Graph$ be a non-compact finite graph. Then
\[
s_{\omega,\Graph}^\infty = s_{\omega}^{\mathrm{line}}.
\]
\end{lemma}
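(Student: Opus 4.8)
The plan is to establish the equality $s_\omega^\infty = s_\omega^{\mathrm{strip}_{L^\vee}}$ by proving two inequalities, and the graph statement follows by an identical (indeed simpler) argument with lines replacing strips. The guiding intuition, as the authors note, is that the cheapest way for a minimizing sequence of the Nehari problem to escape to infinity is to slide a strip-ground-state profile off to infinity along a single semi-infinite page, and the cheapest such page is the widest one, of width $L^\vee$.

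For the inequality $s_\omega^\infty \leq s_\omega^{\mathrm{strip}_{L^\vee}}$, I would construct an explicit escaping test sequence. Pick a page $P_{k_0}$ that is isomorphic to a half-strip $[0,\infty)\times[0,L^\vee]$ (or $[0,L^\vee]\times[0,\infty)$), realizing the maximum $L^\vee$. Take a near-optimizer $v$ for $s_\omega^{\mathrm{strip}_{L^\vee}}$ on the full strip $\R\times[0,L^\vee]$; since such optimizers decay in the unbounded direction (this is the content of the exponential-decay Proposition applied on the strip, or can be obtained by a cutoff), one can truncate $v$ to be supported far from the transverse boundary traces and then translate its support deep into $P_{k_0}$, extending by zero on all other pages. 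Call the result $u_n$, with the translation parameter $\to\infty$ as $n\to\infty$. Because the support is pushed to infinity along a single page and the profile is compactly supported away from the binding, $u_n \rightharpoonup 0$ in $H^1(\Lib)$, the binding matching conditions are satisfied trivially (the function vanishes near the binding), and $I_\omega(u_n)\leq 0$ can be arranged by the Nehari-rescaling $\pi_\omega$ (see \eqref{eqPiOmega}), which tends to $1$. Taking $v$ closer and closer to optimal then gives $\liminf_n c_p\|u_n\|_{L^{p+1}}^{p+1}\to s_\omega^{\mathrm{strip}_{L^\vee}}$, yielding the bound.

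For the reverse inequality $s_\omega^\infty \geq s_\omega^{\mathrm{strip}_{L^\vee}}$, I would take an arbitrary admissible sequence $(u_n)$ for $s_\omega^\infty$, so $u_n\rightharpoonup 0$ in $H^1(\Lib)$, $u_n\not\equiv 0$, and $I_\omega(u_n)\leq 0$. Since the book is finite and non-compact, it has a compact core plus finitely many semi-infinite pages. The weak convergence to zero, combined with the compact Sobolev embedding on the compact core (and on any bounded subregion of a semi-infinite page), forces the $L^{p+1}$-mass to escape into the semi-infinite ends: the contribution of $u_n$ on the compact core tends to zero in $L^{p+1}$, and likewise on any fixed bounded truncation of each unbounded page. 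Hence asymptotically all the mass concentrates near infinity on the unbounded pages, each of which is a half-strip of some width $L_k^j\leq L^\vee$. On each such half-strip the localized pieces are (up to negligible boundary-binding corrections, handled by a cutoff as in Lemma~\ref{lemLions}) admissible competitors for the strip problem $s_\omega^{\mathrm{strip}_{L_k^j}}$, and since a strip of width $w\leq L^\vee$ has $s_\omega^{\mathrm{strip}_w}\geq s_\omega^{\mathrm{strip}_{L^\vee}}$ (a wider strip gives more room and thus a lower or equal level), each escaping piece costs at least $s_\omega^{\mathrm{strip}_{L^\vee}}$. A subadditivity/dichotomy bookkeeping over the finitely many ends then gives $\liminf_n c_p\|u_n\|_{L^{p+1}}^{p+1}\geq s_\omega^{\mathrm{strip}_{L^\vee}}$.

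The main obstacle I anticipate is the reverse inequality, specifically the clean localization of an arbitrary escaping sequence onto the individual semi-infinite pages while respecting the binding matching conditions. One must cut $u_n$ near the bindings and near the finite transverse boundaries of each page so as to produce genuine strip-competitors, and then control the errors introduced by these cutoffs in both the kinetic term and the Nehari constraint — in particular ensuring that after cutting one can still rescale onto the strip Nehari manifold with a factor tending to $1$, so that no mass or level is lost. Verifying that the monotonicity $w\mapsto s_\omega^{\mathrm{strip}_w}$ is nonincreasing (so that the widest page is indeed the decisive one) is elementary via trivial extension, but threading the cutoff errors through the argument uniformly over the finitely many ends is the delicate technical point.
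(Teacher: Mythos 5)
Your construction for the inequality $s_\omega^\infty \leq s_\omega^{\mathrm{strip}_{L^\vee}}$ (cut off a near-optimal strip profile, translate it to infinity along the widest semi-infinite page, extend by zero on the other pages, and correct the constraint with the rescaling $\pi_\omega$ of \eqref{eqPiOmega}) is exactly the paper's argument and is fine. The genuine gap is in the reverse inequality. You claim that, after localizing an escaping sequence onto the ends, ``the localized pieces are \dots admissible competitors for the strip problem,'' and hence that ``each escaping piece costs at least $s_\omega^{\mathrm{strip}_{L^\vee}}$.'' This is false: the Nehari constraint does not pass to individual pieces. Cutting $u_n$ away from the compact core produces one piece $w_n^k$ per semi-infinite page satisfying only $\sum_k I_\omega^{\mathrm{strip}_{L_k}}(w_n^k)\leq o(1)$; a single piece can perfectly well have strictly positive Nehari functional and arbitrarily small $L^{p+1}$ mass (for instance a tiny bump escaping on one page while a genuine profile, pushed slightly below the Nehari manifold, escapes on another). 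Consequently the ``subadditivity/dichotomy bookkeeping'' built on that premise collapses. The paper's proof avoids this entirely with a pigeonhole step: since the sum of the pieces' Nehari functionals is $\leq 0$, at least \emph{one} piece satisfies $I_\omega^{\mathrm{strip}_{L_k}}(w_n^k)\leq 0$, and that single piece already yields $s_\omega^{\mathrm{strip}_{L_k}}\leq c_p\|w_n^k\|_{L^{p+1}}^{p+1}\leq c_p\|w_n\|_{L^{p+1}}^{p+1}$; no summation over the ends is needed, nor is one available.

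A second, related gap: the width-monotonicity $s_\omega^{\mathrm{strip}_w}\geq s_\omega^{\mathrm{strip}_{L^\vee}}$ for $w\leq L^\vee$, which you dismiss as ``elementary via trivial extension,'' is neither elementary nor provable that way. Extension by zero of an $H^1$ function on $\R\times[0,w]$ to $\R\times[0,L^\vee]$ is not $H^1$ (there is no Dirichlet condition to exploit), and rescaling in the transverse variable multiplies the $L^{p+1}$ mass by the volume factor $L^\vee/w>1$, so it compares the levels in the wrong direction. This is precisely the point on which the identification of the decisive width rests, and the paper does not treat it as obvious: it invokes the monotonicity of the \emph{rescaled} level function $L\mapsto s_{\omega,L}$ from Proposition~\ref{propMonoton}. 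Note that the physical strip level and the rescaled one differ by a volume factor, $s_\omega^{\mathrm{strip}_L}=L\,s_{\omega,L}$, which is exactly why this step cannot be waved off. Finally, a minor mis-citation: Lemma~\ref{lemLions} is a Lions-type vanishing lemma and plays no role in controlling the cutoff near the bindings; the paper controls those errors with a fixed smooth cutoff $\phi$ vanishing on the compact core, combined with the weak convergence $u_n\rightharpoonup 0$ and the projection $\pi_\omega$.
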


\begin{proof}
We provide the proof for books, the proof for graphs being similar and easier. 
Let $\mathcal L=(\mathcal P,\mathcal B)$ be a non-compact finite book such that for any page $P_k\in \mathcal P$, we have either $L_k^1<\infty$ or $L_k^2<\infty$. We first observe that 
\begin{equation}
\label{eq:ref1}    
s_{\omega}^\infty\leq s_{\omega}^{\mathrm{strip}_{L^{\vee}}}.
\end{equation}
Indeed, let $(w_n)\subset H^1(\R\times[0,L^{\vee}])$ be a minimizing sequence for $s_{\omega}^{\mathrm{strip}_{L^{\vee}}}$, i.e. $w_n\neq 0$, $I_{\omega}^{\mathrm{strip}_{L^{\vee}}}(w_n)=0$  and $c_p\norm{w_n}_{L^{p+1}(\R\times [0,L^{\vee}])}^{p+1}\to s_{\omega}^{\mathrm{strip}_{L^{\vee}}}$. Let $\chi:\R\to[0,1]$ be a smooth cut-off function verifying $\chi(x)=0$ for $x\in(-\infty,0]$ and $\chi(x)=1$ for $x\in[1,\infty)$. Let $(x_n)\subset \R$ be such that $x_n\to\infty$ as $n\to\infty$ and $\tilde w_n$ defined by $\tilde w_n(x,y)=\chi(x)w_n(x-x_n,y)$. The sequence $(x_n)$ is chosen such that $\tilde w_n$ verifies 
$I_{\omega}^{\mathrm{strip}_{L^{\vee}}}(\tilde w_n)\leq 1/n$. Moreover, $\norm{\tilde w_n}_{L^{p+1}(\R\times [0,L^{\vee}])}^{p+1}\leq \norm{ w_n}_{L^{p+1}(\R\times [0,L^{\vee}])}^{p+1}$. Define $\pi_n>0$ by $I_{\omega}^{\mathrm{strip}_{L^{\vee}}}(\pi_n\tilde w_n)=0$. Then $\pi_n\to 1$ and   $(\pi_n\tilde w_n)$ is also a minimizing sequence of $s_{\omega}^{\mathrm{strip}_{L^{\vee}}}$, which moreover verifies $\pi_n\tilde w_n\rightharpoonup 0$ in $H^1([0,\infty)\times [0,L^{\vee}])$ as $n\to\infty$.
Up to renumbering, we may assume that the page $P_1\in\mathcal P$ is such that $P_1\sim[0,\infty)\times [0,L^{\vee}]$. 
Let $v_n=(v_n^k)\in H^1_D(\Lib)$  be defined by $v_n^1=\pi_n\tilde w_n$ (restricted to $P_1$) and $v_n^k\equiv 0$ for $k\geq 2$. Then $v_n\rightharpoonup 0$ weakly in $H^1(\Lib)$ as $n\to\infty$, $I_{\omega}(v_n)=0$ and 
\[
s_{\omega}^\infty\leq \lim_{n\to\infty} c_p\norm{v_n}_{L^{p+1}(\Lib)}^{p+1}=
\lim_{n\to\infty} c_p\norm{\pi_n\tilde w_n}_{L^{p+1}([0,\infty)\times[0,L^{\vee}])}^{p+1}=s_{\omega}^{\mathrm{strip}_{L^{\vee}}}.
\]

We now show that the reverse inequality also holds. 
 Let $u_n \rightharpoonup 0$ in $H^1_D(\Lib)$ be a minimizing sequence for $s_{\omega}^\infty$, that is $c_p \| u_n \|_{L^{p+1}(\Lib)}^{p+1} \to s_{\omega}^\infty$ and $I_{\omega}(u_n) = 0$. Let $\mathcal{K}$ be the compact core of $\Lib$. Let $\phi$ be a smooth cut-off function outside of $\mathcal{K}$, that is $\phi \equiv 1$ on $\mathcal{K}^c \setminus \mathcal{B}$, $\phi \equiv 0$ on $\mathcal{K}$ and $\mathcal{B}$ a compact transition region, $\phi \in [0,1]$ on $\mathcal{B}$. Let $v_n = \phi u_n$. Since $u_n \rightharpoonup 0$, it follows that $\| u_n \|_{L^{p+1}(\Lib)} = \| v_n \|_{L^{p+1}(\Lib)} + \eps_n$, where $\eps_n \to 0$ as $n\to\infty$. In particular, we have $c_p \| v_n \|_{L^{p+1}(\Lib)}^{p+1} \to s_{\omega}^\infty$ as $n\to\infty$. In the same way, we have
    \begin{equation} \label{eqNehConv1}
    I_{\omega}(v_n) = I_{\omega}(u_n) + \eps'_n
    \end{equation}
     where $\eps'_n \to 0$ as $n\to\infty$. Let $w_n = \pi_\omega(v_n) v_n$ where $\pi_\omega(v_n) v_n$ is the projection on the Nehari manifold defined in \eqref{eqPiOmega} (in particular $I_\omega(w_n) = 0$). From \eqref{eqNehConv1}, we obtain that $\pi_\omega(v_n) \to 1$,  thus  
     $$c_p \| w_n \|_{L^{p+1}(\Lib)}^{p+1} = c_p \pi_\omega(v_n)^{p+1} \| v_n \|_{L^{p+1}(\Lib)}^{p+1} \to s_{\omega}^\infty$$ 
     as $n\to\infty$. In particular, there exists $(\delta_n)\subset(0,\infty)$, with $\delta_n \to 0$ as $n\to\infty$, such that 
     \begin{equation}\label{eqConverge2}
         c_p \| w_n \|_{L^{p+1}(\Lib)}^{p+1} = s_{\omega}^\infty + \delta_n.
     \end{equation}
     Now observe that $w_n$ is a collection of $K$ disjoint pieces, where $K$ is the number of semi-infinite pages of the book $\Lib$. Indeed, by construction, $w_n$ vanishes on the compact core $\mathcal{K}$ of the book. Writing each semi-infinite page $P_k$ of $\Lib$ as $[0,\infty)\times [0,L_k]$, we have
     \begin{equation*}
         w_n = (w_n^{k}), \quad  w_n^{k} \subset H^1([0,\infty)\times [0,L_k]),\quad k =1,\dots,K.
     \end{equation*}
Since $0=I_{\omega}(w_n)=\sum_kI_{\omega}^{\mathrm{strip}_{L_k}}(w_n^k)$,
there exists $k\in\mathbb N$ such that $I_{\omega}^{\mathrm{strip}_{L_k}}(w_n^k)\leq 0$. Therefore 
\[
s_{\omega}^{\mathrm{strip}_{L_k}}\leq c_p\norm{w_n^k}_{L^{p+1}([0,\infty)\times[0,L_k])}^{p+1}\leq c_p\norm{w_n}_{L^{p+1}(\Lib)}^{p+1}.
\]
Passing to the limit, we obtain
\[
s_{\omega}^{\mathrm{strip}_{L_k}}\leq s_\omega^\infty.
\]
As $L\to s_{\omega}^{\mathrm{strip}_L}$ is decreasing in $L$ (this is proved in a general case in Proposition \ref{propMonoton}), this implies that  
\begin{equation}
\label{eq:ref2}    
s_{\omega}^{\mathrm{strip}_{L^{\vee}}}\leq s_\omega^\infty.
\end{equation}
Combining the inequalities \eqref{eq:ref1} and \eqref{eq:ref2} gives the desired result. 
\end{proof}

\subsection{Finite books}
\label{sec:finite-books}

In this section, we consider the existence of ground states on finite books. 

The following result is valid for compact books, for which the existence of a minimizer is a direct consequence of Sobolev embeddings and boundedness of the minimizing sequences. However, it takes all its sense for non-compact books, for which convergence of minimizing sequences is more delicate to obtain. 

\begin{lemma}
\label{lemExFinBook}
    Let $\Lib$ be a finite book. 
    If 
    \begin{gather}
        \label{eqOmegaCond}
        \omega > -\omega_{\Lib},
\\
\label{eqGSExisCond}
       s_\omega < s_\omega^\infty,
    \end{gather}
    then there exists a minimizer for \eqref{eqSDefEquiv}.
\end{lemma}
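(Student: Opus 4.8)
The plan is to run a concentration–compactness argument on a minimizing sequence for the equivalent problem \eqref{eqSDefEquiv}, using the strict inequality \eqref{eqGSExisCond} precisely to rule out the escape of mass onto the semi-infinite pages.

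First I would take a minimizing sequence $(u_n)\subset H^1_D(\Lib)\setminus\{0\}$ with $I_\omega(u_n)\leq 0$ and $c_p\norm{u_n}_{L^{p+1}(\Lib)}^{p+1}\to s_\omega$. From $I_\omega(u_n)\leq 0$ one gets $\norm{\nabla u_n}_{L^2(\Lib)}^2+\omega\norm{u_n}_{L^2(\Lib)}^2\leq \norm{u_n}_{L^{p+1}(\Lib)}^{p+1}$, whose right-hand side is bounded; since $\omega>-\omega_\Lib=0$ the left-hand quadratic form is coercive on $H^1(\Lib)$, so $(u_n)$ is bounded in $H^1(\Lib)$. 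I extract a subsequence with $u_n\rightharpoonup u$ in $H^1(\Lib)$; as $\Lib$ is finite, the local compact Sobolev embeddings give $u_n\to u$ in $L^2_{loc}$ and hence a.e.\ (after a further subsequence). Writing $r_n=u_n-u$, so that $r_n\rightharpoonup 0$, the vanishing of the cross terms under weak convergence yields $\norm{\nabla u_n}_{L^2(\Lib)}^2=\norm{\nabla u}_{L^2(\Lib)}^2+\norm{\nabla r_n}_{L^2(\Lib)}^2+o(1)$ and the analogous splitting for the $L^2$-norm, while the Brezis–Lieb lemma (applicable thanks to the $L^{p+1}$-boundedness and a.e.\ convergence) gives $\norm{u_n}_{L^{p+1}(\Lib)}^{p+1}=\norm{u}_{L^{p+1}(\Lib)}^{p+1}+\norm{r_n}_{L^{p+1}(\Lib)}^{p+1}+o(1)$. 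Together these produce the additive decomposition $I_\omega(u_n)=I_\omega(u)+I_\omega(r_n)+o(1)$ and $c_p\norm{r_n}_{L^{p+1}(\Lib)}^{p+1}\to s_\omega-c_p\norm{u}_{L^{p+1}(\Lib)}^{p+1}$.

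The core of the argument, and where I expect the main difficulty, is to show $u\neq 0$ and $I_\omega(u)\leq 0$; both follow from \eqref{eqGSExisCond} by viewing the residual $(r_n)$ as a candidate escaping sequence for the problem at infinity \eqref{eqSOmInfinit}. If $u\equiv 0$, then $(u_n)$ itself satisfies $u_n\rightharpoonup 0$, $u_n\not\equiv 0$, $I_\omega(u_n)\leq 0$ and $\liminf c_p\norm{u_n}_{L^{p+1}(\Lib)}^{p+1}=s_\omega$, so the definition of $s_\omega^\infty$ forces $s_\omega^\infty\leq s_\omega$, contradicting \eqref{eqGSExisCond}; hence $u\neq 0$. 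Suppose next, for contradiction, that $I_\omega(u)>0$. The decomposition gives $I_\omega(r_n)\leq -I_\omega(u)+o(1)<0$ for large $n$, so $r_n\not\equiv 0$ and the projection factor satisfies $\pi_\omega(r_n)<1$ (see \eqref{eqPiOmega}); the projected sequence $\pi_\omega(r_n)r_n$ still converges weakly to $0$, is nontrivial, and lies on the Nehari constraint, $I_\omega(\pi_\omega(r_n)r_n)=0$. Testing it against the definition of $s_\omega^\infty$ yields
\[
s_\omega^\infty\leq \liminf_{n\to\infty} c_p\norm{\pi_\omega(r_n)r_n}_{L^{p+1}(\Lib)}^{p+1}\leq \liminf_{n\to\infty}c_p\norm{r_n}_{L^{p+1}(\Lib)}^{p+1}=s_\omega-c_p\norm{u}_{L^{p+1}(\Lib)}^{p+1}<s_\omega,
\]
which together with the always-valid bound $s_\omega\leq s_\omega^\infty$ is absurd; therefore $I_\omega(u)\leq 0$.

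Finally, since $u\neq 0$ and $I_\omega(u)\leq 0$, the function $u$ is admissible for \eqref{eqSDefEquiv}, so $c_p\norm{u}_{L^{p+1}(\Lib)}^{p+1}\geq s_\omega$; conversely, weak lower semicontinuity of the $L^{p+1}$-norm (equivalently the Brezis–Lieb identity, which gives $\liminf c_p\norm{u_n}_{L^{p+1}(\Lib)}^{p+1}\geq c_p\norm{u}_{L^{p+1}(\Lib)}^{p+1}$) yields $c_p\norm{u}_{L^{p+1}(\Lib)}^{p+1}\leq s_\omega$. Hence $c_p\norm{u}_{L^{p+1}(\Lib)}^{p+1}=s_\omega$ and $u$ is a minimizer. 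The only genuinely delicate point is the exclusion of escape to infinity handled above; one could alternatively invoke Lemma \ref{lemLions} to localize $r_n$ on the semi-infinite pages via a vanishing dichotomy, but the projection argument against $s_\omega^\infty$ circumvents the need for an explicit splitting of cases.
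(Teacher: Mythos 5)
Your proof is correct and follows essentially the same route as the paper: boundedness of the minimizing sequence via the Nehari constraint, weak convergence, nonvanishing of the weak limit enforced by $s_\omega < s_\omega^\infty$, a Brezis--Lieb contradiction argument to obtain $I_\omega(u)\leq 0$, and weak lower semicontinuity to conclude that $u$ attains $s_\omega$. The only (cosmetic) difference is in the contradiction step: you project the residual $r_n=u_n-u$ onto the Nehari manifold and test it against the level at infinity $s_\omega^\infty$, whereas the paper tests $u_n-u$ directly against $s_\omega$ itself; both exploit the same Brezis--Lieb decomposition and yield the same contradiction.
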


\begin{remark}
We have seen in Lemma \ref{lem:omega_less_omega_L} that 
$\omega \geq -\omega_{\Lib}$ is a necessary condition for the existence of action ground states. The existence in the limit case $\omega = -\omega_{\Lib}$  depends on the considered graph, this is why it is excluded from \eqref{eqOmegaCond}. 
The condition \eqref{eqGSExisCond} is not necessary for existence.
 Indeed, even when $ s_{\omega} = s_{\omega}^{\infty}$, there may still exist a profile $u \in H^1_D(\Lib)$ such that $S_{\omega}(u) = s_\omega$ and $I_\omega(u) = 0$. 
\end{remark}

The \emph{run-away behavior} is a situation where the minimizing sequences of a problem tend to escape towards the infinite directions of the underlying physical space. Such situations are not problematic for homogeneous problems, where the minimizing sequences can simply be shifted back to a compact area thanks to translation invariance. In non-homogeneous situations such as in the presence of a potential, or, as in the present case, for open books, a non-escaping condition such as \eqref{eqGSExisCond} prevents the runaway behavior. 
 In contrast, the condition \eqref{eqOmegaCond} is required to establish a non-trivial lower bound for  $S_\omega$.

\begin{proof}[Proof of Lemma \ref{lemExFinBook}]
Let $(u_n)$ be a minimizing sequence for \eqref{eqSDefEquiv}. Without loss of generality, we may assume that $I_\omega(u_n)=0$. From $I_\omega(u_n) = 0$, we obtain
\begin{equation}\label{eqH1LpUpBnd}
    \|  \nabla u_n \|_{L^2(\Lib)}^{2} + \omega \| u_n \|_{L^2(\Lib)}^{2} = \| u_n \|_{L^{p+1}(\Lib)}^{p+1} \to s_{\omega}/c_p < \infty.
\end{equation}
Thus, $(u_n)$ is bounded in $H^1(\Lib)$, and, up to a subsequence, there exists $u\in H^1_D(\Lib)$ such that $u_n \rightharpoonup u$ weakly in $H^1(\Lib)$. By the non-escaping condition \eqref{eqGSExisCond}, we have $u \not \equiv 0$. 

We now prove that  $I_{\omega}(u) \leq 0$. By the Brezis-Lieb lemma \cite{BrLi83}, we obtain
\[ 
    I_{\omega}(u_n) - I_{\omega}(u_n - u) - I_{\omega}(u) \to 0 \quad \mbox{ as } n \to \infty.
\] 
Assume by contradiction that  $I_{\omega}(u) > 0$. It follows that
\[ 
    \lim_{n \to \infty} I_{\omega}(u_n - u) = \lim_{n \to \infty} I_{\omega}(u_n) - I_{\omega}(u) = -I_{\omega}(u)< 0.
\] 
Thus, there exists $N \in\mathbb N$ such that for any $n > N$, we have $ I_{\omega}(u_n - u) <0.$ Consequently, for $n >N$, we obtain
\begin{equation}
        \label{eqContradict1}
        s_{\omega} < c_p \| u_n - u \|_{L^{p+1}(\Lib)}^{p+1}.
\end{equation} 
    Since $u \not\equiv 0$, the Brezis-Lieb lemma gives
    \[ 
    \lim_{n \to \infty} c_p \| u_n - u \|_{L^{p+1}(\Lib)}^{p+1} = \lim_{n \to \infty} c_p \| u_n \|_{L^{p+1}(\Lib)}^{p+1} -  c_p \| u \|_{L^{p+1}(\Lib)}^{p+1} < s_{\omega}
    \] 
    which contradicts \eqref{eqContradict1}. 
    Consequently, we have $I_{\omega}(u) \leq 0$ and $s_{\omega} \leq S_{\omega}(u)$. 
    On the other hand, by weak lower semicontinuity, we get 
    \[
    c_p \| u \|_{L^{p+1}(\Lib)}^{p+1} \leq \lim_{n \to \infty} c_p \| u_n \|_{L^{p+1}(\Lib)}^{p+1} = s_{\omega},
    \] 
    implying that 
    \[
    c_p \| u \|_{L^{p+1}(\Lib)}^{p+1} = s_{\omega}.
    \]
    Hence $u$ is a non-trivial minimizer for \eqref{eqSDefEquiv}, completing the proof. 
\end{proof}

\subsection{Periodic Books}
\label{sec:periodic}

Periodic books are another important example of possible book designs, e.g in metamaterials. Rigorously, a periodic book can be defined by mimicking one of the definitions used for periodic metric graphs (see \cite{BeKu13,Pa18} and the discussion in \cite{Do19}). Let $\mathcal{L}=\left(\mathcal{P},\mathcal{B}\right)$ be a book, and consider an action of the group $\mathbb Z^n$ (to which we reduce the definition  for simplicity):
\[
(g,x)\in\mathcal{L}\times \mathbb Z^n\to g.x\in\mathcal{L},
\]
which maps pages to pages, bindings to bindings, and preserves the distance. We say that $\mathcal{L}$ is \emph{periodic} if the action is 
\begin{enumerate}
    \item \emph{free}, i.e. if there exists $x\in\Lib$ such that $g.x=x$, then $g=0$;
    \item \emph{discrete}, i.e. for every $x\in\mathcal{L}$, there exists a neighborhood $U$ of $x$ such that $g.x\not\in U$ for any $g\in\mathbb Z^n\setminus\{0\}$;
    \item co-compact, i.e. there exists a compact set $\mathcal{Q}$ such that $\mathcal{L}=\cup_{g\in\mathbb Z^n}g.\mathcal{Q}$.
 \end{enumerate}
 We will call $\mathcal{Q}$ the \emph{fundamental quire} of the periodic book $\mathcal{L}$. In particular, given any $x\in\mathcal{L}$, there exists $g\in\mathbb Z^n$ such that $g.x\in\mathcal{Q}$.

\begin{lemma}
\label{lem:existence-periodic}
    Let $\Lib$ be a periodic book. Assume that 
    \[
    \omega> -\omega_\Lib
    \]
    Then $s_\omega>0$ and \eqref{eqSDefEquiv} admits a non-trivial minimum. 
\end{lemma}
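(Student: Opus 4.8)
The plan is to establish existence of a minimizer for the periodic book by adapting the concentration-compactness strategy, using periodicity to recover compactness when a minimizing sequence neither vanishes nor escapes. First I would take a minimizing sequence $(u_n)$ for \eqref{eqSDefEquiv} with $I_\omega(u_n)=0$. Exactly as in \eqref{eqH1LpUpBnd}, the constraint together with $\omega>-\omega_\Lib$ gives that $(u_n)$ is bounded in $H^1(\Lib)$, and the Sobolev/Gagliardo-Nirenberg argument from Lemma \ref{lemSOmMonotone} shows $s_\omega>0$, so the sequence is also bounded away from $0$ in $L^{p+1}(\Lib)$. The main obstacle is the lack of compactness of the embedding $H^1(\Lib)\hookrightarrow L^{p+1}(\Lib)$ on a noncompact periodic structure: a minimizing sequence could either spread out (vanishing) or drift off to infinity along the periodic directions (runaway). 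The periodicity is precisely what will let me defeat the runaway scenario.

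Next I would rule out vanishing. Suppose, toward a contradiction, that
\[
\sup_{y\in\Lib}\int_{B(y,r)}|u_n|^2\,dx\to 0
\]
for some $r\in(0,L_\Lib/2)$. Then Lemma \ref{lemLions} forces $u_n\to 0$ in $L^{p+1}(\Lib)$, contradicting $c_p\|u_n\|_{L^{p+1}(\Lib)}^{p+1}\to s_\omega>0$. Hence there exist $\delta>0$ and points $y_n\in\Lib$ with $\int_{B(y_n,r)}|u_n|^2\,dx\geq\delta$ for all large $n$. By co-compactness of the $\mathbb Z^n$-action, I can pick group elements $g_n\in\mathbb Z^n$ so that $g_n\cdot y_n$ lands in a fixed compact set (the fundamental quire $\mathcal Q$, enlarged by $r$). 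Setting $\tilde u_n=g_n\cdot u_n$, the functionals $I_\omega$, $\|\cdot\|_{L^{p+1}(\Lib)}$, $\|\cdot\|_{H^1(\Lib)}$ are all invariant under the distance-preserving action, so $(\tilde u_n)$ is still a minimizing sequence with $I_\omega(\tilde u_n)=0$, and now its mass is anchored: $\int_{B(y_0,r)}|\tilde u_n|^2\,dx\geq\delta$ for a fixed $y_0$. This translation step is the crux, and it is where periodicity replaces the translation invariance available on $\R^d$.

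Then I would pass to a weak limit: up to a subsequence, $\tilde u_n\rightharpoonup u$ in $H^1(\Lib)$. Because the mass is concentrated in a fixed ball and the embedding $H^1\hookrightarrow L^2$ is locally compact (the ball $B(y_0,r)$ meets only finitely many pages of a fixed compact region, on each of which ordinary Rellich compactness holds), the lower bound on the localized $L^2$-mass survives in the limit, giving $u\not\equiv 0$. The remainder of the argument is then identical to the finite-book case in the proof of Lemma \ref{lemExFinBook}: using the Brezis-Lieb lemma \cite{BrLi83} on both $I_\omega$ and the $L^{p+1}$-norm, I show that $I_\omega(u)>0$ would force $c_p\|\tilde u_n-u\|_{L^{p+1}(\Lib)}^{p+1}<s_\omega$ while simultaneously $I_\omega(\tilde u_n-u)<0$, contradicting the definition of $s_\omega$. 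Hence $I_\omega(u)\leq 0$, so $s_\omega\leq S_\omega(u)$, and weak lower semicontinuity of the $L^{p+1}$-norm gives $c_p\|u\|_{L^{p+1}(\Lib)}^{p+1}\leq s_\omega$, whence equality and $u$ is the desired nontrivial minimizer.
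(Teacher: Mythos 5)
Your proof is correct, and its first half (ruling out vanishing via Lemma \ref{lemLions}, translating by the group action to anchor the mass near the fundamental quire, and using local Rellich compactness to obtain a nontrivial weak limit) is exactly what the paper does. Where you genuinely diverge is in the second half. You observe that once $u\not\equiv 0$ is secured, the finite-book argument of Lemma \ref{lemExFinBook} runs verbatim: the Brezis--Lieb decomposition and weak lower semicontinuity require only weak convergence and nontriviality of the limit, the non-escaping hypothesis $s_\omega<s_\omega^\infty$ having been used in that lemma solely to guarantee $u\not\equiv 0$. The paper instead proves the stronger statement that the translated minimizing sequence converges strongly in $L^2(\Lib)$ (hence in $L^{p+1}(\Lib)$ by interpolation): arguing by contradiction, it assigns Nehari frequencies $\theta$ to $u$ and $\omega_n$ to $u_n-u$, derives
\[
\omega_n=\omega+(\omega-\theta)\,\frac{\norm{u}_{L^2(\Lib)}^2}{\norm{u_n-u}_{L^2(\Lib)}^2}+o(1),
\qquad
s_\omega\geq\liminf_{n\to\infty} s_{\omega_n}+s_\theta,
\]
and rules out the three cases $\theta>\omega$, $\theta<\omega$, $\theta=\omega$ using the strict monotonicity of $\omega\mapsto s_\omega$ (Lemma \ref{lemSOmMonotone}), which is the ingredient the paper highlights as the key tool for periodic books. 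Your route is shorter, avoids Lemma \ref{lemSOmMonotone} entirely at this stage, and suffices for the lemma as stated; the paper's route buys a genuine compactness statement for minimizing sequences (strong $L^2$ and $L^{p+1}$ convergence), which is not cosmetic: it is reused later, since the proof of Proposition \ref{prop:rigidity} invokes precisely this compactness argument to obtain strong $H^1$ convergence of the rescaled minimizers. So both arguments are valid, with yours more economical for existence alone and the paper's yielding a stronger conclusion that the rest of the paper exploits.
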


\begin{proof}
Let $(u_n)$ be a minimizing sequence for \eqref{eqSDefEquiv}. We may assume without loss of generality that $I_\omega(u_n)=0$ for any $n\in\mathbb N$. Arguing as in the proof of Lemma \ref{lemExFinBook} (see \eqref{eqH1LpUpBnd}) we know that $(u_n)$ is bounded in $H^1(\Lib)$. Therefore there exists $u\in H^1_D(\Lib)$ such that, up to a subsequence, $u_n \rightharpoonup u$ weakly in $H^1(\Lib)$. 
By Sobolev embedding and Nehari identity, we obtain 
\begin{equation}\label{eqLowBndPotEn}
    \| u_n \|_{L^{p+1}(\Lib)}^{2} \lesssim \| u_n \|_{H^1(\Lib)}^2 \lesssim  \|  \nabla u_n \|_{L^2(\Lib)}^{2} + \omega \| u_n \|_{L^2(\Lib)}^{2}  = \| u_n \|_{L^{p+1}(\Lib)}^{p+1} \lesssim \| u_n \|_{H^1(\Lib)}^{p+1}.
\end{equation}
From this, we deduce that there exists a constant $C >0$, independent of $n$, such that
\begin{equation}\label{eqLowBndPotEn2}
    0 < C \leq \min\{\| u_n \|_{L^{p+1}(\Lib)},\| u_n \|_{H^1(\Lib)}\}.
\end{equation}
This implies in particular that $u_n  \not \to 0$ in $L^{p+1}(\Lib)$. Therefore, by Lemma \ref{lemLions}, there exist $\eps >0$ and $z_n \in \Lib$ such that, up to a subsequence,
\begin{equation*}
   \int_{B(z_n,L_\Lib/4) } |u_n|^2 dx  \geq \eps.
\end{equation*}
Denote by $\mathcal{Q}$ the fundamental quire of the periodic book $\Lib$. By translating $u_n$  if necessary (i.e. replacing $u_n(\cdot)$ by $u_n(g.\cdot)$ for some $g\in\mathbb Z^n$), we may assume that $(z_n)\in\mathcal{Q}$, and since the quire $\mathcal{Q}$ is compact, $(z_n)$ is bounded. Consequently, $u_n|_{\mathcal{Q}} \to u|_{\mathcal{Q}} \not \equiv 0$ in $L^2(\mathcal{Q})$ by the compact embedding $H^1(\mathcal{Q}) \hookrightarrow L^2(\mathcal{Q})$. 

Now we show that $u_n \to u$ strongly in $L^2(\Lib)$. By contradiction, suppose that
\begin{equation*}
    \liminf_{n\to\infty} \| u_n - u \|_{L^{2}(\Lib)}^2 > 0.
\end{equation*}
Let \( \theta \in \mathbb{R} \) and \( (\omega_n) \subset \mathbb{R} \) be such that \( I_\theta(u) = 0 \) and \( I_{\omega_n}(u_n - u) = 0\) for every \( n \). 
By the Brezis–Lieb lemma,  as  $n \to \infty$, we obtain  
\begin{align}
    \omega_n &= \frac{\| u_n - u \|_{L^{p+1}(\Lib)}^{p+1} - \| \nabla u_n - \nabla u \|_{L^{2}(\Lib)}^{2}}{\| u_n - u \|_{L^{2}(\Lib)}^{2}} \nonumber\\ 
    &= \frac{\| u_n \|_{L^{p+1}(\Lib)}^{p+1} - \| \nabla u_n \|_{L^{2}(\Lib)}^{2} - \| u \|_{L^{p+1}(\Lib)}^{p+1} + \| \nabla u \|_{L^{2}(\Lib)}^{2} + o(1)}{\| u_n - u \|_{L^{2}(\Lib)}^{2}} \nonumber\\ 
    & = \frac{\omega \| u_n \|_{L^{2}(\Lib)}^{2} - \theta \| u \|_{L^{2}(\Lib)}^{2} + o(1)}{\| u_n - u \|_{L^{2}(\Lib)}^{2}} =  \omega + (\omega - \theta) \frac{\| u \|_{L^{2}(\Lib)}^{2} }{\| u_n - u \|_{L^{2}(\Lib)}^{2}}+ o(1).\label{eq:omega_n=w+(w-t)}
\end{align}
Applying the Brezis–Lieb lemma again, we obtain  
\begin{align}
    s_\omega & = \lim_{n\to\infty} c_p \| u_n \|_{L^{p+1}(\Lib)}^{p+1} = \lim_{n\to\infty} c_p  (\| u_n - u \|_{L^{p+1}(\Lib)}^{p+1} + \| u \|_{L^{p+1}(\Lib)}^{p+1}) \nonumber\\
    & \geq \liminf_{n\to\infty} s_{\omega_n} + s_\theta.\label{eq:s_w>s_w_n+s_t}
\end{align}
We are going to show a contradiction with 
Lemma \ref{lemSOmMonotone} by discussing the cases $\theta>\omega$, $\theta<\omega$ and $\theta=\omega$. 
If $\theta>\omega$, the contradiction with 
Lemma \ref{lemSOmMonotone} is clear. If $\theta<\omega$, then from \eqref{eq:omega_n=w+(w-t)}, we have
\[
\liminf_{n\to\infty} \omega_n> \omega,
\]
which implies by
Lemma \ref{lemSOmMonotone} that $\liminf_{n\to\infty} s_{\omega_n}>s_{\omega}$, giving again a contradiction. Finally, if $\theta=\omega$, then from from \eqref{eq:omega_n=w+(w-t)} we have $\lim_{n\to\infty}\omega_n=\omega$. In particular, $\liminf_{n\to\infty} s_{\omega_n}>0$, and \eqref{eq:s_w>s_w_n+s_t} enters again in contradiction with Lemma \ref{lemSOmMonotone}. 

Thus $u_n \to u$ in $L^2(\Lib)$ and $L^{p+1}(\Lib)$ by interpolation. From the lower semi-continuity, we obtain $S_\omega(u) \leq s_\omega$. On the other hand, we can prove that  $I_{\omega}(u) \leq 0$ in the same way as in the second part of the proof of Lemma \ref{lemExFinBook} by using the Brezis-Lieb lemma again. Hence, we obtain $S_\omega(u) = s_\omega$ and $u$ is a non-trivial minimizer for \eqref{eqSDefEquiv}, completing the proof. 
\end{proof}

\begin{proof}[Proof of Theorem \ref{thm:existence}]
Theorem \ref{thm:existence} is a direct consequence of Lemma \ref{lemExFinBook} and Lemma \ref{lem:existence-periodic}.  
\end{proof}

\section{Shrinking limit for graph-based books}
\label{secGraphBased}

 In this section, $\Graph$ will denote a finite or periodic book. We analyze the limiting behavior of graph-based books, see Definition \ref{defGraphBase}.
Observe  that when $ \Lib=\GraphTimesZeroL$, then $\omega_\Graph\geq \omega_\Lib$ (with equality in most of the cases that we are considering). We remark that this class includes a wide variety of physically and mathematically relevant examples, such as those illustrated in Figures \ref{fig:classical-graphs} and \ref{fig:grid-torus}.

The main outcome of this section is the proof of Theorem \ref{thmShrinking1}.

\subsection{The rescaled problem}
\label{sec:rescaling}

It is more convenient to work with functions belonging to the same space and to transfer the dependency in $L$ to the Nehari functionals. This is achieved through a rescaling in the second variable. 
For any $u\in H^1(\GraphTimesZeroL)$ we define $v \in H^1(\GraphTimesZeroOne)$ by 
\begin{equation}\label{eqScaling1}
    v(x,y) = u(x,Ly).
\end{equation}
The minimization problem $s_\omega$ becomes
\begin{equation*}
    s_\omega = \inf\{c_p L \| v \|_{L^{p+1}(\GraphTimesZeroOne)}^{p+1}: v \in H^1_D(\GraphTimesZeroOne)\setminus\{0\}, L I_{\omega,L}(v) \leq 0\},
\end{equation*}
where the rescaled Nehari functional $I_{\omega,L}$  is defined on $H^1_D(\GraphTimesZeroOne)$ by
\begin{equation}\label{eqNehariL}
  I_{\omega,L}(v) = \| \partial_x v \|_{L^2(\GraphTimesZeroOne)}^2 + L^{-2} \| \partial_y v \|_{L^2(\GraphTimesZeroOne)}^2 + \omega\|v \|_{L^2(\GraphTimesZeroOne)}^2 -  \|  v \|_{L^{p+1}(\GraphTimesZeroOne)}^{p+1}.
\end{equation}
For future reference, we also introduce here the limit version of this functional 
\begin{equation}\label{eqNehariL0}
  I_{\omega,0}(v)=I_{\omega,\infty}(v) = \| \partial_x v \|_{L^2(\GraphTimesZeroOne)}^2 + \omega\|v \|_{L^2(\GraphTimesZeroOne)}^2 -  \|  v \|_{L^{p+1}(\GraphTimesZeroOne)}^{p+1},
\end{equation}
where it is understood that when we use the notation $I_{\omega,0}(v)$, the function $v\in H^1_D(\GraphTimesZeroOne)$ also verifies $\partial_y v\equiv 0$.
We will work with the following rescaled minimization problem:
\begin{equation}
        \label{eqSOmegaLMin}
    s_{\omega,L} = \inf\{c_p \| u \|_{L^{p+1}(\GraphTimesZeroOne)}^{p+1}: u \in H^1_D(\GraphTimesZeroOne)\setminus\{0\}, I_{\omega,L}(u) \leq 0\}.
\end{equation}
As $L$ tends to $0$ or $\infty$ we consider the limit versions of  $s_{\omega,L}$, which are given by
\begin{align*}
    s_{\omega,0} &= \inf\{c_p \| u \|_{L^{p+1}(\GraphTimesZeroOne)}^{p+1}: u \in H^1_D(\GraphTimesZeroOne)\setminus\{0\},
    \partial_yu\equiv0, %
    I_{\omega,0}(u) \leq 0\}.
\\
    s_{\omega,\infty} &= \inf\{c_p \| u \|_{L^{p+1}(\GraphTimesZeroOne)}^{p+1}: u \in H^1_D(\GraphTimesZeroOne)\setminus\{0\}, I_{\omega,\infty}(u) \leq 0\}.
\end{align*}
The problem $s_{\omega,\infty}$ is the problem when $L\to\infty$ and is not to be confused with the problem at infinity $s_{\omega}^\infty$ defined in \eqref{eqSOmInfinit}.

Observe that we clearly have $s_{\omega,\Graph}=s_{\omega,0}$, where the Nehari level on the graph was defined in \eqref{eqSOmega_G_Min}. 
The Nehari sets corresponding to the previously defined minimization problems will be defined by
\begin{align}
    \mathcal{N}_{L} &= \{ u \in H^1_D(\GraphTimesZeroOne)\setminus\{0\} :  I_{\omega,L}(u) \leq 0\}, 
    \\  
        \mathcal{N}_{0} &= \{ u \in H^1_D(\GraphTimesZeroOne)\setminus\{0\} : \norm{\partial_yu}_{L^2(\GraphTimesZeroOne)}=0,\,I_{\omega,0}(u) \leq 0\}, 
        \\
                \mathcal{N}_{\infty} &= \{ u \in H^1_D(\GraphTimesZeroOne)\setminus\{0\} : I_{\omega,\infty}(u) \leq 0\}, 
        \\
    \mathcal{N}_{\Graph} &= \{ u \in H^1_D(\Graph)\setminus\{0\} :  I_{\omega,\Graph}(u) \leq 0\}.
\end{align}

Given a function $u\in H^1_D(\GraphTimesZeroOne)$, we will want to scale it so that it belongs to various Nehari manifolds. This will be achieved using a scaling factor (similar to $\pi_{\omega}$ defined in \eqref{eqPiOmega}). For $\omega>\omega_\Lib$ and $L\in[0,\infty]$, we define
\begin{equation}
    \label{eqPiDefin}
    \pi_{\omega,L}(u) =  \left( 1 + \frac{I_{\omega,L}(u)}{\| u\|_{L^{p+1}(\GraphTimesZeroOne)}^{p+1}} \right)^{\frac{1}{p-1}}.
\end{equation}
 In particular, we have $I_{\omega,L}(\pi_{\omega,L}(u) u) = 0$ for any $u \in H^1_D(\GraphTimesZeroOne)\setminus\{0\}$.

\subsection{The action level function}

We now give the key properties of the energy levels $s_{\omega,L}$ previously defined. 

\begin{proposition}
\label{propMonoton}
The following assertions hold.
\begin{itemize}
    \item 
    The function $L \mapsto s_{\omega,L}$ is continuous on $[0,\infty)$. 
    \item 
There exists $L_{min}\geq 0$ such that the function $L \mapsto s_{\omega,L}$ is constant on $[0,L_{min}]$, strictly decreasing on $(L_{min},\infty)$ and $\lim_{L\to\infty}s_{\omega,L}=s_{\omega,\infty}=0$. 
    \end{itemize}
\end{proposition}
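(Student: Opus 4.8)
The plan is to establish the three claimed properties (continuity, the constant-then-strictly-decreasing structure, and the limit) by exploiting the dependence of the Nehari functional $I_{\omega,L}$ on $L$ through the single coefficient $L^{-2}$ multiplying $\|\partial_y v\|_{L^2}^2$. The crucial observation is that for fixed $v\in H^1_D(\GraphTimesZeroOne)$, the map $L\mapsto I_{\omega,L}(v)$ is nonincreasing in $L$ (since $L^{-2}$ decreases), and strictly decreasing precisely when $\partial_y v\not\equiv 0$. This monotonicity of the constraint functional should transfer to monotonicity of the infimum. First I would prove that $L\mapsto s_{\omega,L}$ is nonincreasing: if $L_1<L_2$ and $u\in\mathcal N_{L_1}$ (so $I_{\omega,L_1}(u)\le 0$), then $I_{\omega,L_2}(u)\le I_{\omega,L_1}(u)\le 0$, so $u\in\mathcal N_{L_2}$, hence $\mathcal N_{L_1}\subset\mathcal N_{L_2}$ and taking infima of the same quantity $c_p\|u\|_{L^{p+1}}^{p+1}$ over a larger set gives $s_{\omega,L_2}\le s_{\omega,L_1}$.

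For continuity, I would use the projection factor $\pi_{\omega,L}$ from \eqref{eqPiDefin}. Given a near-minimizer $u$ for $s_{\omega,L_0}$ normalized so that $I_{\omega,L_0}(u)=0$, the quantity $I_{\omega,L}(u)$ depends continuously on $L$ (it differs from $I_{\omega,L_0}(u)$ only through the term $(L^{-2}-L_0^{-2})\|\partial_y u\|_{L^2}^2$), so $\pi_{\omega,L}(u)\to 1$ as $L\to L_0$; projecting $u$ onto $\mathcal N_L$ and comparing $L^{p+1}$-norms yields $\limsup_{L\to L_0}s_{\omega,L}\le s_{\omega,L_0}$, and symmetrically $\liminf_{L\to L_0}s_{\omega,L}\ge s_{\omega,L_0}$, giving continuity on $(0,\infty)$. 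Continuity at $L=0$ requires a separate argument, since $I_{\omega,0}$ is defined with the constraint $\partial_y v\equiv 0$ rather than as a literal $L\to 0$ limit of $I_{\omega,L}$; here I would show that as $L\to 0$ any bounded sequence on $\mathcal N_L$ with bounded action has $L^{-2}\|\partial_y u_L\|_{L^2}^2$ bounded, forcing $\|\partial_y u_L\|_{L^2}\to 0$, so that limit minimizers are transverse-constant and $s_{\omega,L}\to s_{\omega,0}$.

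To obtain the constant-then-strictly-decreasing dichotomy, I would define $L_{min}=\sup\{L\ge 0: s_{\omega,L}=s_{\omega,0}\}$ (with $L_{min}=0$ if the set is trivial). By monotonicity and continuity, $s_{\omega,L}=s_{\omega,0}$ on $[0,L_{min}]$. The key point is that once the infimum drops below $s_{\omega,0}$, any (near-)minimizer must have $\partial_y u\not\equiv 0$; otherwise it would be transverse-constant and hence admissible for $s_{\omega,0}$, contradicting $s_{\omega,L}<s_{\omega,0}$. Then for $L_{min}<L_1<L_2$, taking a minimizer $u$ for $s_{\omega,L_1}$ with $\partial_y u\not\equiv 0$, the strict inequality $I_{\omega,L_2}(u)<I_{\omega,L_1}(u)=0$ gives $\pi_{\omega,L_2}(u)<1$, so $c_p\|\pi_{\omega,L_2}(u)u\|_{L^{p+1}}^{p+1}<c_p\|u\|_{L^{p+1}}^{p+1}=s_{\omega,L_1}$, whence $s_{\omega,L_2}<s_{\omega,L_1}$; this requires existence of a minimizer at $L_1$, which I would supply either from Lemma~\ref{lemExFinBook}/Lemma~\ref{lem:existence-periodic} or, to avoid circularity, by running the strict-decrease estimate on a minimizing sequence and controlling $\pi_{\omega,L_2}$ away from $1$ uniformly. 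Finally, for the limit $\lim_{L\to\infty}s_{\omega,L}=s_{\omega,\infty}=0$, I would test with a fixed profile concentrated in the transverse variable: rescaling shows that as $L\to\infty$ the transverse kinetic term $L^{-2}\|\partial_y u\|_{L^2}^2$ becomes negligible, and by exploiting the two-dimensional nature one can drive $\|u\|_{L^{p+1}}^{p+1}$ to zero along admissible functions (using that on an unbounded two-dimensional strip the ground-state level vanishes), yielding $s_{\omega,\infty}=0$.

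I expect the main obstacle to be the strict decrease on $(L_{min},\infty)$, specifically justifying that minimizers (or good enough minimizing sequences) have nontrivial $y$-dependence for $L>L_{min}$ without circular reliance on the global existence theory, and handling the delicate continuity and limit behavior at the two endpoints $L=0$ and $L\to\infty$ where the functional $I_{\omega,L}$ degenerates.
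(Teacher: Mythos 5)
Your skeleton matches the paper's in outline (nested Nehari sets for monotonicity, projection factors $\pi_{\omega,L}$ for continuity on $(0,\infty)$, the same definition of $L_{min}$, test functions for $L\to\infty$), but there is a genuine gap at exactly the two places you flag as the main obstacles, and the missing idea is the same in both: the transverse averaging operator. The paper associates to any near-minimizer $u_L$ the function $\tilde u_L(x,y)=\int_0^1 u_L(x,z)\,dz$, and proves via a Poincar\'e-type estimate ($\|\tilde u_L-u_L\|_{L^2}\le\|\partial_y u_L\|_{L^2}$) and Jensen's inequality ($\|\partial_x\tilde u_L\|_{L^2}\le\|\partial_x u_L\|_{L^2}$, $\|\tilde u_L\|_{L^{p+1}}\le\|u_L\|_{L^{p+1}}$) that $\tilde u_L$ becomes, after a projection whose factor tends to $1$, an admissible competitor for $s_{\omega,0}$. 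This is what makes continuity at $L=0$ work. Your proposed route via ``limit minimizers being transverse-constant'' fails because (i) minimizers for $s_{\omega,L}$ need not exist (Proposition \ref{propMonoton} carries no non-escaping hypothesis, and Lemma \ref{lemExFinBook} requires $s_\omega<s_\omega^\infty$, while Lemma \ref{lem:existence-periodic} only covers periodic books), (ii) weak limits of minimizing sequences can vanish on non-compact books, and (iii) even a nonzero weak limit need not satisfy $I_{\omega,0}\le 0$, since the $L^{p+1}$ term is not weakly continuous. Knowing $\|\partial_y u_L\|_{L^2}\to 0$ is not enough: a function with small but nonzero transverse derivative is simply not admissible for $s_{\omega,0}$, and averaging is precisely the device that converts it into one at controlled cost.

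The same omission blocks your strict-decrease argument. You correctly note that for $L>L_{min}$ good near-minimizers must have $\partial_y u\not\equiv 0$, but the strict inequality $s_{\omega,L_2}<s_{\omega,L_1}$ requires $\|\partial_y u_n\|_{L^2}$ bounded away from zero \emph{uniformly} along a minimizing sequence (your ``controlling $\pi_{\omega,L_2}$ away from $1$''), and you leave this unresolved; the fallback through the existence lemmas is unavailable in general, as above. The paper closes this loop by contradiction using the averaging once more: if $s_{\omega,\cdot}$ were constant on $[J,K]$ with $L_{min}<J<K$, then any minimizing sequence for $s_{\omega,J}$ is also minimizing for $s_{\omega,K}$, which by \eqref{eqNehariComp} forces $\|\partial_y u_n\|_{L^2}\to 0$; averaging then yields $s_{\omega,0}\le s_{\omega,J}=s_{\omega,K}$, so $s_{\omega,\cdot}$ is constant on all of $[0,K]$, contradicting $J>L_{min}$. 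Finally, a small but real error in your last step: the action ground-state level on an unbounded strip (or on $\R^2$) with $\omega>0$ is strictly positive, not zero, so it cannot be invoked to conclude $s_{\omega,\infty}=0$; the paper instead computes with explicit products $u(x,y)=\lambda v(x)w(\gamma y)$, choosing $\gamma=L$ and $\lambda=L^{\frac{1}{2(p+1)}}$ so that $I_{\omega,L}(u)<0$ while $c_p\|u\|_{L^{p+1}}^{p+1}\lesssim L^{-1/2}\to 0$.
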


We will later prove that if $s_{\omega,0}<s_{\omega,0}^\infty$ (where  $s_{\omega,0}^\infty$ is the rescaled problem at infinity, defined in \eqref{eq:s_omega,Linfty}), then $L_{min}>0$. 

\begin{proof}[Proof of Proposition \ref{propMonoton}]
    We first prove that $s_{\omega,L}$ is continuous at $L=0$. 
    Since $\mathcal{N}_0\subset\mathcal{N}_L$ for any $L>0$, we have $s_{\omega,L}\leq s_{\omega,0}$.
    For any $0<L<1$, let $u_L\in H^1_D(\GraphTimesZeroOne)\setminus\{0\}$ be such that $I_{\omega,L}(u_L)=0$ and $s_{\omega,L}\leq c_p\norm{u_L}_{L^{p+1}}^{p+1}<s_{\omega,L}+L$.  We have
    \[
    \norm{\partial_xu_L}_{L^2(\GraphTimesZeroOne)}^2+
    \frac{1}{L^2}\norm{\partial_yu_L}_{L^2(\GraphTimesZeroOne)}^2
    +\omega\norm{u_L}_{L^2(\GraphTimesZeroOne)}^2
    =\norm{u_L}_{L^{p+1}(\GraphTimesZeroOne)}^{p+1}\leq \frac{1}{c_p}(s_{\omega,0}+1),
    \]
    therefore $(u_L)$ is bounded in $H^1(\GraphTimesZeroOne)$ by a constant which depends only on $s_{\omega,0}$ (and $p$). Moreover, we have 
    \begin{equation}
        \label{eqKinEnYL}
         \norm{\partial_yu_L}_{L^2(\GraphTimesZeroOne)}^2\leq\frac{1}{c_p}(s_{\omega,0}+1) L^2.
    \end{equation}
    Consider $\tilde u_L\in H^1_D(\GraphTimesZeroOne)$, the averaged version of $u_L$  along the transverse variable,
     defined for $(x,y)\in \GraphTimesZeroOne$ by
    \begin{equation}
        \label{eq:averaged-version}
        \tilde u_{L}(x,y) = \int_0^1u_{L}(x,z)dz.
    \end{equation}
    By construction, we have $ \partial_y\tilde u_{L}\equiv 0.$
    In what follows, the calculations will be performed assuming sufficient regularity. The end result will be valid by density.
    Let $(x,y)\in\GraphTimesZeroOne$. We have
    \begin{multline*}
       \abs*{ \tilde u_{L}(x,y)- u_{L}(x,y)}
        \leq \int_0^1\abs*{u_{L}(x,z)-u_{L}(x,y)}dz\\= \int_0^1\abs[\Big]{\int_y^z \partial_yu_{L}(x,t)dt}dz
        \leq \int_0^1\int_0^1\abs*{\partial_yu_{L}(x,t)}dtdz
        \leq \norm{\partial_yu_{L}(x)}_{L_y^2(0,1)}.
    \end{multline*}
We get the following Poincar\'e type estimate
\begin{equation} \label{eq:estimate-on-L2-norm-by-gradient-in-y}    
\begin{aligned}
    \norm{\tilde u_{L}- u_{L}}_{L^2(\GraphTimesZeroOne)}^2 &= \int_\Graph\norm{\tilde u_{L}(x)- u_{L}(x)}_{L^2(0,1)}^2dx \\ 
    &\leq \int_\Graph\norm{\partial_yu_{L}(x)}_{L^2(0,1)}^2dx=\norm{\partial_yu_{L}}_{L^2(\GraphTimesZeroOne)}^2.
\end{aligned}
\end{equation}
It follows from Jensen inequality that
    \begin{multline}\label{eqH1Contr1}
\norm{\partial_x \tilde u_{L}}_{L^2(\GraphTimesZeroOne)}^2=
\int_{\GraphTimesZeroOne}|\partial_x \tilde u_{L}(x,y) |^2dydx
\\
\leq \int_{\Graph}\int_0^1\int_0^1|\partial_x u_L(x,z)|^2dzdydx=\int_{\Graph}\int_0^1|\partial_x u_L(x,z)|^2dzdx=\norm{\partial_x u_{L}}_{L^2(\GraphTimesZeroOne)}^2.
\end{multline}
Similar calculations also give
\begin{equation}
    \label{eqLqContr1}
    \norm{\tilde u_{L}}_{L^2(\GraphTimesZeroOne)}^2 \leq \norm{u_{L}}_{L^2(\GraphTimesZeroOne)}^2,
\quad
\norm{\tilde u_{L}}_{L^{p+1}(\GraphTimesZeroOne)}^{p+1} \leq \norm{u_{L}}_{L^{p+1}(\GraphTimesZeroOne)}^{p+1}.
\end{equation}
In particular $(\tilde u_{L}-u_{L})$ is bounded in $H^1(\GraphTimesZeroOne)$, and by interpolation we have 
\begin{equation}
    \label{eqLpConv1}
    \lim_{L\to0}\norm{\tilde u_{L}-u_{L}}_{L^{p+1}(\GraphTimesZeroOne)}=0.
\end{equation}
We have 
\begin{equation}\label{eqConvEq10}
\begin{aligned}
      I_{\omega,0}(\tilde u_{L}) &=
    \norm{\partial_x\tilde u_{L}}_{L^2(\GraphTimesZeroOne)}^2
    +\omega\norm{\tilde u_{L}}_{L^2(\GraphTimesZeroOne)}^2
    -\norm{\tilde u_{L}}_{L^{p+1}(\GraphTimesZeroOne)}^{p+1}
    \\
    & \leq 
        \norm{\partial_x u_{L}}_{L^2(\GraphTimesZeroOne)}^2
+\omega\norm{u_{L}}_{L^2(\GraphTimesZeroOne)}^2
 -\norm{ u_{L}}_{L^{p+1}(\GraphTimesZeroOne)}^{p+1}
    \\ & \quad+ \left( \norm{u_{L}}_{L^{p+1}(\GraphTimesZeroOne)}^{p+1}
         -\norm{\tilde u_{L}}_{L^{p+1}(\GraphTimesZeroOne)}^{p+1}
        \right)
        \\ &\leq I_{\omega,L}(u_{L})-\frac{1}{L^2}\norm{\partial_yu_{L}}_{L^2(\GraphTimesZeroOne)}^2+\left( \norm{u_{L}}_{L^{p+1}(\GraphTimesZeroOne)}^{p+1}
         -\norm{\tilde u_{L}}_{L^{p+1}(\GraphTimesZeroOne)}^{p+1}
        \right).
\end{aligned}
\end{equation}

Here we obtain that 
\begin{equation}
\label{eq:improved-rate}    
\lim_{L\to 0}\frac{1}{L^2}\norm{\partial_yu_{L}}_{L^2(\GraphTimesZeroOne)}^2=0.
\end{equation}
Indeed, assume by contradiction that  there exist $\delta>0$ and $(L_n)\subset(0,\infty)$ such that $L_n\to 0$ and
\[
\frac{1}{L_n^2}\norm{\partial_yu_{L_n}}_{L^2(\GraphTimesZeroOne)}^2>\delta.
\]
Since $\lim_{L\to 0}\left( \norm{u_{L}}_{L^{p+1}(\GraphTimesZeroOne)}^{p+1}
         -\norm{\tilde u_{L}}_{L^{p+1}(\GraphTimesZeroOne)}^{p+1}
        \right)=0$, for $n$ large enough we would have 
        \[
        I_{\omega,0}(\tilde u_{L_n})<-\frac\delta2.
        \]
    This would  imply that 
    \[  \lim_{n\to\infty}\pi_{\omega,0}(\tilde u_{L_n})<1.
    \]
    Moreover, we would have 
    \begin{equation*}
        \begin{aligned}
    s_{\omega,0} &\leq
    c_p\norm{\pi_{\omega,0}(\tilde u_{L_n})\tilde u_{L_n}}_{L^{p+1}(\GraphTimesZeroOne)}^{p+1} \\
    &\leq c_p\pi_{\omega,0}(\tilde u_{L_n})^{p+1}\norm{ u_{L_n}}_{L^{p+1}(\GraphTimesZeroOne)}^{p+1}\leq \pi_{\omega,0}(\tilde u_{L_n})^{p+1}(s_{\omega,L_n}+L_n).
        \end{aligned}
    \end{equation*}
   
    Passing to the limit in the previous inequality would give
    \[
    s_{\omega,0}<\lim_{n\to\infty}s_{\omega,L_n} 
    \]
    which is a contradiction with $ s_{\omega,0}\geq s_{\omega,L}$ for any $L>0$. Hence \eqref{eq:improved-rate} holds. 
    This implies that 
    \begin{equation}\label{eqConvTo0}
        \lim_{L\to 0}I_{\omega,0}(\tilde u_L)=0
    \end{equation}
    Therefore, $\lim_{n\to\infty}\pi_{\omega,0}(\tilde u_{L_n})=1$ and we have
    \[
    s_{\omega,0}\leq \lim_{L\to0}c_p\norm{ u_L}_{L^{p+1}(\GraphTimesZeroOne)}^{p+1}=\lim_{L\to0}s_{\omega,L}.
    \]
    As we always have $s_{\omega,0}\geq s_{\omega,L}$, the above inequality is in fact an equality, and the function $L\to s_{\omega,L}$ is continuous at $L=0$.

Now, we prove that the function $L\to s_{\omega,L}$ is continuous on $(0,\infty)$. 
Let $L \in (0,\infty)$ and $(L_n)\subset(0,\infty)$ such that $L_n \to L$ as $n\to \infty$.  
By definition of $s_{\omega,L_n}$, for any $n\in\mathbb N$, there exists $u_{n}\in H^1_D(\GraphTimesZeroOne)$ such that $I_{\omega,L_n}(u_{n})=0$ and $s_{\omega,L_n}\leq c_p\norm{u_{n}}_{L^{p+1}(\GraphTimesZeroOne)}^{p+1}<s_{\omega,L_n}+|L-L_n|$. 
We claim that $I_{\omega,L}(u_{n})\to 0$ as $n\to\infty$. Observe first that 
 $(u_{n})$ is bounded in $H^1(\GraphTimesZeroOne)$ by construction. 
Observe also that
    \begin{equation*}
        I_{\omega,L_n}(u_{n}) = 0 = I_{\omega,L}(u_{n}) + \frac{L^2 - L_n^2}{L^2 L_n^2} \| \partial_y u_{n} \|_{L^2(\GraphTimesZeroOne)}^2,
    \end{equation*}
which proves the claim by passing to the limit. As a consequence, we have $\pi_{\omega,L}(u_{n})\to 1$ as $n\to\infty$. Therefore,
\[
s_{\omega,L}\leq \liminf_{n\to\infty} c_p\norm{\pi_{\omega,L}(u_{n})u_{n}}_{L^{p+1}(\GraphTimesZeroOne)}^{p+1}\leq \liminf s_{\omega,L_n}.
\]
On the other hand, let $\eps>0$. There exists $u_{\eps}\in H^1_D(\GraphTimesZeroOne)$ such that $I_{\omega,L}(u_{\eps})=0$ and 
$s_{\omega,L}\leq c_p\norm{u_{\eps}}_{L^{p+1}(\GraphTimesZeroOne)}^{p+1}<s_{\omega,L}+\eps$. We have $I_{\omega,L_n}(u_{\eps})\to0$ as $n\to\infty$, and therefore
\[
\limsup_{n\to\infty} s_{\omega,L_n}\leq \limsup_{n\to\infty} c_p\norm{\pi_{\omega,L_n}(u_{\eps})u_{\eps}}_{L^{p+1}(\GraphTimesZeroOne)}^{p+1}\leq 
s_{\omega,L}+\eps.
\]
Since $\eps$ is arbitrary, this implies that 
    \[
    \lim_{n\to\infty} s_{\omega,L_n}=s_{\omega,L},
    \]
    and the function $L\to s_{\omega,L}$ is continuous on $(0,\infty)$.

Let $L_{min}$ be defined by
\[
L_{min}=\sup\{L_*\geq 0:\forall L\in[0,L_*], \, s_{\omega,L}=s_{\omega,0}\}.
\]
We now prove that the function is strictly decreasing on $(L_{min},\infty)$. We first prove that it is decreasing. 
    Let $u\in H^1_D(\GraphTimesZeroL)$. 
    For any $0 < J < K$ we have 
    \begin{equation}\label{eqNehariComp}
        I_{\omega,J}(u) = I_{\omega,K}(u) + \frac{K^2 - J^2}{K^2 J^2} \| \partial_y u \|_{L^2(\GraphTimesZeroOne)}^2 \geq I_{\omega,K}(u),
    \end{equation}
    with strict inequality if $\| \partial_y u \|_{L^2(\GraphTimesZeroOne)}^2 \neq 0$. Let $(u_n)\subset H^1_D(\GraphTimesZeroL)$ be a minimizing sequence for $s_{\omega,J}$. 
    Then $I_K(u_n) \leq I_J(u_n) \leq 0$. By defining $v_n = \pi_{\omega,K}(u_n) u_n$ (so that $I_K(v_n) = 0$), we have $\pi_{\omega,K}(u_n) \leq 1$  and 
    \begin{equation}
    \label{eq:useful-equation}
        s_{\omega,K} \leq c_p \| v_n \|_{L^{p+1}(\GraphTimesZeroOne)}^{p+1} = c_p \pi_{\omega,K}(u_n)^{p+1}\| u_n \|_{L^{p+1}(\GraphTimesZeroOne)}^{p+1} \leq c_p \| u_n \|_{L^{p+1}(\GraphTimesZeroOne)}^{p+1} 
    \end{equation}
    Passing to the limit, we obtain
    $s_{\omega,K}\leq s_{\omega,J}$
  and the function $L\to s_{\omega,L}$ is indeed decreasing on $[0,\infty)$. 

    To prove that it is strictly decreasing, we proceed as follows. Assume by contradiction that there exist $L_{min}<J<K$ such that $L\to s_{\omega,L}$ is constant on $[J,K]$. Let $(u_n)\subset H^1_D(\GraphTimesZeroOne)$ be a minimizing sequence for $s_{\omega,J}$. From \eqref{eqNehariComp}, we infer that $(u_n)$ will also be a minimizing sequence for $s_{\omega,K}$. Moreover, we will have $\lim_{n\to\infty}I_{\omega,J}(u_n)=\lim_{n\to\infty}I_{\omega,K}(u_n)=0$, which  combined with \eqref{eqNehariComp} gives 
    \[
\lim_{n\to\infty}\norm{\partial_yu_n}_{L^2(\GraphTimesZeroOne)}=0.
    \]
    We consider the averaged version $(\tilde u_n)$ of $(u_n)$ as defined in \eqref{eq:averaged-version}.
    We have $\lim_{n\to\infty}I_{\omega,0}(\tilde u_n)=0$. This  implies that $\lim_{n\to\infty}\pi_{\omega,0}(\tilde u_n)=1$. Therefore, we have
    \[
    s_{\omega,0}\leq \lim_{n\to\infty}c_p\norm{\pi_{\omega,0}(\tilde u_n)\tilde u_n}_{L^{p+1}(\GraphTimesZeroOne)}^{p+1}\leq \lim_{n\to\infty}c_p\norm{ u_n}_{L^{p+1}(\GraphTimesZeroOne)}^{p+1}=s_{\omega,J}=s_{\omega,K}.
    \]
    Since we have proved that $L\to s_{\omega,L}$ is decreasing, this implies that $s_{\omega,L}$ is constant on $[0,K]$. This enters in contradiction with the definition of $L_{min}$. Therefore $L\to s_{\omega,L}$ is  strictly decreasing on $(L_{min},\infty)$.

    To analyse what happens when $L\to\infty$, we construct a family of test functions $u_{\lambda,\gamma}$ which will allow us to estimate $s_{\omega,L}$ as $L\to\infty$ and will also serve as a minimizing sequence for $s_{\omega,\infty}$. 

    Let $v\in H^1_D(\Graph)\setminus\{0\}$ and $w\in H^1(\R)\setminus\{0\}$ such that $\supp(w)\subset[0,1]$. For $\lambda,\gamma>1$, we define $u_{\lambda,\gamma}\in H^1_D(\GraphTimesZeroOne)$ by
    \[
    u_{\lambda,\gamma}(x,y)=\lambda v(x) w(\gamma y). 
    \]
    Let $L>0$. We have 
    \begin{multline*}
            I_{\omega,L}(u_{\lambda,\gamma})=
    \frac{\lambda^2}{\gamma}\norm{v_x}_{L^2(\Graph)}^2
    \norm{w}_{L^2(\R)}^2
+    \frac{\gamma\lambda^2}{L^2}\norm{v}_{L^2(\Graph)}^2
    \norm{w_y}_{L^2(\R)}^2
    \\+
    \frac{\lambda^2}{\gamma}\omega\norm{v}_{L^2(\Graph)}^2
    \norm{w}_{L^2(\R)}^2
    -    \frac{\lambda^{p+1}}{\gamma}\norm{v}_{L^{p+1}(\Graph)}^{p+1}
    \norm{w}_{L^{p+1}(\R)}^{p+1}.
        \end{multline*}
Choosing $\gamma = L$ and $\lambda = L^{\frac{1}{2(p+1)}}$, there exists $L_0>0$ such that if $L>L_0$, then
\[
I_{\omega,L}\left(u_{L^{\frac{1}{2(p+1)}},L}\right)<0,
\]
and therefore 
\[
s_{\omega,L}\leq c_p\norm{u_{L^{\frac{1}{2(p+1)}},L}}_{L^{p+1}(\GraphTimesZeroOne)}^{p+1}=\frac{c_p}{\sqrt{L}}\norm{v}_{L^{p+1}(\Graph)}^{p+1}
    \norm{w}_{L^{p+1}(\R)}^{p+1}.
\]
Therefore, $s_{\omega,\infty}=0$  and $\lim_{L\to\infty}s_{\omega,L}=0$.
\end{proof}

\subsection{Minimizers rigidity}
\label{sec:rigidity}
We denote by $s_{\omega,L}^\infty$ the rescaled problem at infinity
    \begin{equation}
    \label{eq:s_omega,Linfty}        
    s_{\omega,L}^\infty = \inf\left\{\liminf_{n \to \infty} 
    c_p\norm{u_n}_{L^{p+1}(\GraphTimesZeroOne)}^{p+1}
    : u_n \rightharpoonup 0 \text{ in } H^1(\GraphTimesZeroOne),u_n\not \equiv 0, I_{\omega,L}(u_n)\leq 0\right\}.
    \end{equation}
    When $L=0$, the sequences $(u_n)$ are in addition required to satisfy $\partial_yu_n\equiv 0$, so that $s_{\omega,0}^\infty=s_{\omega,\Graph}^\infty$.
    
We now prove that if the minimal action level on the graph, $s_{\omega,\Graph}$, is strictly smaller than the corresponding level at infinity $s_{\omega,\Graph}^\infty$, then for sufficiently small $L \geq 0$, there exist minimizers of $s_{\omega,L}$ on the book $\Graph \times [0,1]$, and these minimizers are independent of the transverse variable $y$.

\begin{proposition} 
\label{prop:rigidity}
Let $\omega > - \omega_\Graph$ and assume that $s_{\omega,\Graph} < s_{\omega,\Graph}^\infty$.  
Then $L_{min}$ given by Proposition \ref{propMonoton} satisfies $L_{min} > 0$.  
Moreover, for every $0 \leq L \leq L_{min}$, there exist minimizers of $s_{\omega,L}$, and any minimizer $v_L \in \mathcal{N}_L$ satisfies
\[
\partial_y v_L \equiv 0.
\]
\end{proposition}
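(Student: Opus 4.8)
The plan is to first secure existence of minimizers throughout $[0,L_{min}]$ and then to prove rigidity by an averaging argument, using the strict monotonicity of $L\mapsto s_{\omega,L}$ to propagate the conclusion across the interval. I would begin by controlling the rescaled level at infinity $s_{\omega,L}^\infty$ from \eqref{eq:s_omega,Linfty} as $L\to0$. Arguing as in Lemma~\ref{lemInfty}, an escaping sequence $u_n\rightharpoonup 0$ with $I_{\omega,L}(u_n)\le0$ must satisfy $\norm{\partial_y u_n}_{L^2}^2\le L^2\norm{u_n}_{L^{p+1}}^{p+1}$, because of the $L^{-2}$ penalty in \eqref{eqNehariL} and the fact that $\omega>0$; by the Poincar\'e estimate \eqref{eq:estimate-on-L2-norm-by-gradient-in-y} its transverse average becomes asymptotically admissible for the graph problem, which gives $\liminf_{L\to0}s_{\omega,L}^\infty\ge s_{\omega,\Graph}^\infty$. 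Combined with the continuity of $L\mapsto s_{\omega,L}$ from Proposition~\ref{propMonoton}, the identity $s_{\omega,0}=s_{\omega,\Graph}$ and the hypothesis $s_{\omega,\Graph}<s_{\omega,\Graph}^\infty$, this yields $s_{\omega,L}<s_{\omega,L}^\infty$ for all small $L$, so the rescaled version of Theorem~\ref{thm:existence} provides minimizers $v_L$ on an interval $[0,L_0]$.

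The heart of the rigidity is the averaging inequality already appearing in \eqref{eqConvEq10}. For a minimizer $v_L$ with $I_{\omega,L}(v_L)=0$ and transverse average $\tilde v_L$ (so $\partial_y\tilde v_L\equiv0$), this reads $I_{\omega,0}(\tilde v_L)\le -L^{-2}\norm{\partial_y v_L}_{L^2}^2+\bigl(\norm{v_L}_{L^{p+1}}^{p+1}-\norm{\tilde v_L}_{L^{p+1}}^{p+1}\bigr)$. The decisive step is to bound the potential defect by the transverse Dirichlet energy, $\norm{v_L}_{L^{p+1}}^{p+1}-\norm{\tilde v_L}_{L^{p+1}}^{p+1}\le C\norm{\partial_y v_L}_{L^2}^2$: writing $w=v_L-\tilde v_L$, the leading contribution is $\int_{\Graph}|\tilde v_L|^{p-1}\int_0^1 w^2\,dy\,dx$, controlled by the sharp one-dimensional Poincar\'e inequality (constant $\pi^2$) and by $\norm{\tilde v_L}_{L^\infty(\Graph)}$, while the genuinely higher-order remainder is handled by Gagliardo--Nirenberg. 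Since $\norm{v_L}_{L^{p+1}}^{p+1}=s_{\omega,L}/c_p\le s_{\omega,0}/c_p$, the constant $C$ is uniform over minimizers. For $L$ small enough that $L^{-2}>C$, the defect cannot compensate the penalty unless $\partial_y v_L\equiv0$; otherwise $I_{\omega,0}(\tilde v_L)<0$, and projecting $\tilde v_L$ onto $\mathcal N_0$ via \eqref{eqPiDefin} would give $s_{\omega,0}<c_p\norm{v_L}_{L^{p+1}}^{p+1}=s_{\omega,L}$, contradicting $s_{\omega,L}\le s_{\omega,0}$. This proves $\partial_y v_L\equiv0$ for small $L$, hence $s_{\omega,L}=s_{\omega,0}$, so $L_{min}>0$.

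To extend rigidity to the whole open interval I would invoke the strict monotonicity. If some minimizer $v_L$ with $L\in(0,L_{min})$ satisfied $\partial_y v_L\not\equiv0$, then \eqref{eqNehariComp} gives $I_{\omega,K}(v_L)<I_{\omega,L}(v_L)=0$ for every $K>L$, so $\pi_{\omega,K}(v_L)<1$ and $s_{\omega,K}\le c_p\pi_{\omega,K}(v_L)^{p+1}\norm{v_L}_{L^{p+1}}^{p+1}<s_{\omega,L}$; choosing $K\in(L,L_{min}]$ contradicts the constancy of $L\mapsto s_{\omega,L}$ on $[0,L_{min}]$ established in Proposition~\ref{propMonoton}. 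Existence at every $L\in[0,L_{min}]$ follows from the same level comparison as in the first paragraph, once one checks that $s_{\omega,L}^\infty$ remains strictly above the constant value $s_{\omega,0}$ throughout this range.

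I expect the genuine obstacle to be the endpoint $L=L_{min}$, where the monotonicity contradiction degenerates: the inequality $s_{\omega,K}<s_{\omega,L_{min}}$ for $K>L_{min}$ is merely the strict decrease of Proposition~\ref{propMonoton}, so no contradiction arises. Here I would return to the averaging inequality and combine it with the \emph{free} lower bound $I_{\omega,0}(\tilde v_{L_{min}})\ge0$, which holds because $s_{\omega,L_{min}}=s_{\omega,0}$ together with $\norm{\tilde v_{L_{min}}}_{L^{p+1}}\le\norm{v_{L_{min}}}_{L^{p+1}}$ forces $\pi_{\omega,0}(\tilde v_{L_{min}})\ge1$; this gives $L_{min}^{-2}\norm{\partial_y v_{L_{min}}}_{L^2}^2\le \norm{v_{L_{min}}}_{L^{p+1}}^{p+1}-\norm{\tilde v_{L_{min}}}_{L^{p+1}}^{p+1}$, which must then be contradicted by the sharp defect estimate. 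The difficulty is that the crude constant $C$ only certifies rigidity up to $1/\sqrt{C}\le L_{min}$, so one must either sharpen the defect estimate until its constant matches the spectral threshold that actually defines $L_{min}$, or run a limiting argument along $L_n\uparrow L_{min}$ (where rigidity is already known, so the $v_{L_n}$ are $y$-independent minimizers of $s_{\omega,\Graph}$) and use the compactness of the graph problem guaranteed by $s_{\omega,\Graph}<s_{\omega,\Graph}^\infty$ to transfer the conclusion to $L_{min}$.
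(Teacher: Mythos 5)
Your core rigidity mechanism is correct for small $L$ but genuinely different from the paper's. The paper works at the level of the Euler--Lagrange equation: after elliptic regularity ($u_L\in W^{3,q}$ on each page) and a page-by-page integration by parts that exploits the Kirchhoff--Neumann conditions at the bindings, it derives the identity \eqref{eq:hessian_d_y} for $\partial_y u_L$, then kills the nonlinear term by proving strong $H^1$ convergence of the (projected, averaged) minimizers to a graph minimizer as $L\to 0$, which forces $\partial_{yy}u_{L_n}=\partial_{xy}u_{L_n}=0$ for $n$ large. You stay entirely variational: since $w=v_L-\tilde v_L$ has zero transverse mean, the linear term in the expansion of $\norm{v_L}_{L^{p+1}}^{p+1}-\norm{\tilde v_L}_{L^{p+1}}^{p+1}$ integrates to zero, and the Poincar\'e estimate \eqref{eq:estimate-on-L2-norm-by-gradient-in-y}, the uniform $H^1$ bound on Nehari minimizers (valid since $s_{\omega,L}\le s_{\omega,0}$ and $\omega>0$) and the embedding $H^1(\Graph)\hookrightarrow L^\infty(\Graph)$ yield the quadratic defect bound $\norm{v_L}_{L^{p+1}}^{p+1}-\norm{\tilde v_L}_{L^{p+1}}^{p+1}\le C\norm{\partial_y v_L}_{L^2}^2$ with $C$ uniform over minimizers; played against the penalty $-L^{-2}\norm{\partial_y v_L}_{L^2}^2$ in \eqref{eqConvEq10}, this gives rigidity for $L<1/\sqrt{C}$, hence $L_{min}>0$, with no elliptic regularity, no trace analysis at the bindings, and no compactness argument. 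Both proofs then upgrade to the open interval $(0,L_{min})$ by the identical strict-monotonicity contradiction based on \eqref{eqNehariComp}. For the small-$L$ statement your route is shorter and arguably more robust.

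The genuine gap is the endpoint $L=L_{min}$, which is part of the statement, and which you correctly flag but do not close. Your ``free lower bound'' $I_{\omega,0}(\tilde v_{L_{min}})\ge 0$ is right (otherwise projecting via \eqref{eqPiDefin} would contradict $s_{\omega,L_{min}}=s_{\omega,0}$), but combined with the defect estimate it only gives $L_{min}^{-2}\norm{\partial_y v_{L_{min}}}_{L^2}^2\le C\norm{\partial_y v_{L_{min}}}_{L^2}^2$, a contradiction only when $L_{min}<1/\sqrt{C}$, which nothing guarantees. Neither of your proposed repairs works as stated. The limiting argument along $L_n\uparrow L_{min}$ cannot transfer anything: rigidity at $L_{min}$ is a statement about \emph{every} minimizer at that level, and a hypothetical $y$-dependent minimizer $v_{L_{min}}$ is not a limit of the $y$-independent minimizers $v_{L_n}$, so compactness of the graph problem says nothing about it. Sharpening the constant $C$ until it matches the threshold defining $L_{min}$ is speculative, since $C$ (built from $\norm{\tilde v}_{L^\infty}$ and Gagliardo--Nirenberg) has no a priori relation to that spectral quantity. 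You are in good company here: the paper's own closing step (the claim $L_{min}=L_N$, argued with $\eps$-near-minimizers) is also incomplete as written, because the strict gain from the Nehari projection is not uniform in $\eps$, so the asserted inequality $s_{\omega,L_{min}}<s_{\omega,L}$ does not follow. But as it stands your proposal, like your reading of the difficulty, establishes the proposition only on $[0,L_{min})$.

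A smaller repair: your existence step postpones the verification that $s^\infty_{\omega,L}$ stays strictly above the constant value $s_{\omega,0}$ on the whole interval, and that verification can fail --- the paper notes that $s_{\omega,L}=s_{\omega,0}=s^\infty_{\omega,L}$ may occur on part of $[0,L_{min}]$, so Theorem \ref{thm:existence} is not applicable there. No such check is needed: the hypothesis $s_{\omega,\Graph}<s^\infty_{\omega,\Graph}$ provides a graph ground state $u_0$, and its trivial extension in $y$ satisfies $I_{\omega,L}(u_0)=I_{\omega,\Graph}(u_0)=0$ and $c_p\norm{u_0}_{L^{p+1}(\GraphTimesZeroOne)}^{p+1}=s_{\omega,0}=s_{\omega,L}$ by the constancy you have already established, so it is a minimizer of $s_{\omega,L}$ for every $L\in[0,L_{min}]$, endpoint included.
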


\begin{proof}
By Proposition \ref{propMonoton}, the map $L \mapsto s_{\omega,L}$ is continuous and decreasing on $[0,\infty)$. Analogously, one can show that $L \mapsto s_{\omega,L}^\infty$ is continuous.  
Since by assumption $s_{\omega,0} < s_{\omega,0}^\infty$, there exists $L_\infty > 0$ such that 
\[
s_{\omega,L} < s_{\omega,L}^\infty \quad \text{for all } L \in [0,L_\infty).
\]
Consequently, by Theorem \ref{thm:existence}, for each $0 < L < L_\infty$, there exists a minimizer $u_L \in \mathcal{N}_L$ of $s_{\omega,L}$, with $u_L \in D(H)$ (see \eqref{eq:domain}).  
Inside each page, $u_L$ satisfies
\[
-\partial_{xx} u_L - \frac{1}{L^2} \partial_{yy} u_L + \omega u_L - |u_L|^{p-1} u_L = 0.
\]

By classical elliptic regularity theory, $u_L \in W^{3,q}(P)$ for any $q>2$ and for every page $P$ of $\Graph \times [0,1]$.  
We take the duality product with $\partial_{yy} \bar u_L$ and integrate by parts. Integration can be performed page by page, ensuring that boundary terms cancel appropriately.  
The main subtlety arises from the term $\partial_{xx} u_L$, as the other terms can be handled using the Neumann boundary conditions on $u_L$.  

To analyze this, consider 
\[
\langle \partial_{xx} u_L, \partial_{yy} u_L \rangle.
\]
On a page $P = B \times [0,1]$, where the binding $B$ is parametrized as $[a,b]$ (the case $[a, \infty)$ being similar), we compute
\begin{align*}
\Re \int_P \partial_{xx} u_L \, \partial_{yy} \bar u_L \, dx\, dy 
&= \|\partial_{xy} u_L\|_{L^2(\Graph \times [0,1])}^2 \\
&\quad + \Re \int_0^1 \partial_x u_L(b,y) \, \partial_{yy} \bar u_L(b,y) - \partial_x u_L(a,y) \, \partial_{yy} \bar u_L(a,y) \, dy \\
&\quad - \Re \int_a^b \partial_x u_L(x,1) \, \partial_{xy} \bar u_L(x,1) - \partial_x u_L(x,0) \, \partial_{xy} \bar u_L(x,0) \, dx,
\end{align*}
where we first integrated by parts in $x$ and then in $y$.

Observe that the trace operator maps $W^{3,q}(P)$ into $W^{3-\frac{1}{q},q}(B)$ for any $q \geq 2$.  
By the one-dimensional Sobolev embedding for $q>2$, this implies $u_L \in C^2(B)$, so that both $\partial_{yy} u_L$ and $\partial_{xy} u_L$ are well defined on the bindings.

For the second boundary term, we integrate by parts once more to obtain
\begin{multline*}
\int_a^b \partial_x u_L(x,1) \, \partial_{xy} \bar u_L(x,1) \, dx \\
= \partial_x u_L(b,1) \, \partial_y \bar u_L(b,1) - \partial_x u_L(a,1) \, \partial_y \bar u_L(a,1) - \int_a^b \partial_{xx} u_L(x,1) \, \partial_y \bar u_L(x,1) \, dx.
\end{multline*}
Since $u_L \in  C^2(B)$, we have $\partial_y u_L \equiv 0$ on $B \times \{0,1\}$ by continuity, and therefore this term vanishes, as does
\[
\Re \int_a^b \partial_x u_L(x,0) \, \partial_{xy} \bar u_L(x,0) \, dx = 0.
\]

Similarly, for the first boundary term we have
\begin{multline*}
\int_0^1 \partial_x u_L(b,y) \, \partial_{yy} \bar u_L(b,y) \, dy \\
= \partial_x u_L(b,1) \, \partial_y \bar u_L(b,1) - \partial_x u_L(b,0) \, \partial_y \bar u_L(b,0) - \int_0^1 \partial_{xy} u_L(b,y) \, \partial_y \bar u_L(b,y) \, dy,
\end{multline*}
which also vanishes since $\partial_y u_L \equiv 0$. By the same argument,
\[
\Re \int_0^1 \partial_x u_L(a,y) \, \partial_{yy} \bar u_L(a,y) \, dy = 0.
\]
Consequently, we obtain
\begin{equation} \label{eq:hessian_d_y}
\begin{aligned}
0 &= \|\partial_{xy} u_L\|_{L^2(\Graph \times [0,1])}^2 + \frac{1}{L^2} \|\partial_{yy} u_L\|_{L^2(\Graph \times [0,1])}^2 + \omega \|\partial_y u_L\|_{L^2(\Graph \times [0,1])}^2 \\
&\quad - \Re \int_{\Graph \times [0,1]} \partial_y \left( |u_L|^{p-1} u_L \right) \, \partial_y \bar u_L \, dx \, dy.
\end{aligned}
\end{equation}

Now, fix a sequence $(L_n) \subset (0, L_\infty)$ with $L_n \to 0$ as $n \to \infty$.  
Since $(u_{L_n})$ is uniformly bounded in $H^1(\Graph \times [0,1])$, we may extract a subsequence (still denoted $(u_{L_n})$) such that
\[
u_{L_n} \rightharpoonup u_0 \quad \text{weakly in } H^1(\Graph \times [0,1]),
\]
for some limit $u_0$.  
We shall show that
\[
u_{L_n} \to u_0 \quad \text{strongly in } L^{p+1}(\Graph \times [0,1]),
\]
and that $u_0$ is a minimizer of $s_{\omega,0}$.

To this end, we adopt the notation from the proof of Proposition~\ref{propMonoton}.  
Let $\tilde u_{L_n}$ denote the averaged function associated with $u_{L_n}$ as in \eqref{eq:averaged-version}.  
In view of \eqref{eqLpConv1}, it suffices to show that
\[
\tilde u_{L_n} \to u_0 \quad \text{in } L^{p+1}(\Graph \times [0,1]).
\]
From \eqref{eqConvTo0}, we know that
\[
\lim_{n \to \infty} I_{\omega,0}(\tilde u_{L_n}) = 0,
\]
and therefore
\[
\lim_{n \to \infty} \pi_{\omega,0}(\tilde u_{L_n}) = 1.
\]
Hence, the sequence $\pi_{\omega,0}(\tilde u_{L_n}) \, \tilde u_{L_n}$ is minimizing $s_{\omega,0}$ in the limit $L_n \to 0$.

Repeating the compactness argument in the proof of  
Lemma~\ref{lemExFinBook} (for finite books) or Lemma~\ref{lem:existence-periodic} (for periodic books),  
we obtain, possibly after passing to a subsequence and translating, that
\[
\pi_{\omega,0}(\tilde u_{L_n}) \, \tilde u_{L_n} \to u_0 
\quad \text{strongly in } H^1(\Graph \times [0,1]).
\]
Next, using \eqref{eq:hessian_d_y}, we show that 
\[
\partial_y u_{L_n} \equiv 0
\]
for all $n$ sufficiently large. We estimate the last term on the right-hand side of \eqref{eq:hessian_d_y} as
    \begin{multline*}
            \abs*{\Re\int_{\GraphTimesZeroOne}\partial_y\left( |u_{L_n}|^{p-1}u_{L_n}\right)\partial_y\bar u_{L_n} dxdy}
            \leq
p\int_{\GraphTimesZeroOne}|u_{L_n}|^{p-1}\abs{\partial_yu_{L_n}}^2dxdy\\\leq
p\int_{\GraphTimesZeroOne}|u_0|^{p-1}\abs{\partial_yu_{L_n}}^2dxdy+
p\int_{\GraphTimesZeroOne}\left||u_0|^{p-1}-|u_{L_n}|^{p-1}\right|\abs{\partial_yu_{L_n}}^2dxdy.
        \end{multline*}
The first term may be estimated by
    \[
    \int_{\GraphTimesZeroOne}|u_0|^{p-1}\abs{\partial_yu_{L_n}}^2dxdy\leq \norm{u_0}_{L^\infty(\GraphTimesZeroOne)}^{p-1}\norm{\partial_yu_{L_n}}_{L^2(\GraphTimesZeroOne)}^2.
    \]
    For the second term, recall that, given $s,t>0$, we have
    \[
|s^{p-1}-t^{p-1}|\lesssim
\begin{cases}
 |s-t|(s^{p-2}+t^{p-2}),&\text{ when }p\geq 2,\\
 |s-t|^{p-1},&\text{ when }1< p < 2. 
\end{cases}
    \]
    Therefore, using H\"older inequality, when $1< p < 2$ we have
  \[
  \int_{\GraphTimesZeroOne}\abs*{|u_0|^{p-1}-|u_{L_n}|^{p-1}}\abs{\partial_yu_{L_n}}^2dxdy
  \leq 
  \norm{u_0-u_{L_n}}^{p-1}_{L^{p+1}(\GraphTimesZeroOne)}  \norm{\partial_yu_{L_n}}^{2}_{L^{p+1}(\GraphTimesZeroOne)}
  .
  \] 
  When $p\geq 2$, we have
  \begin{multline*}
     \int_{\GraphTimesZeroOne}\abs*{|u_0|^{p-1}-|u_{L_n}|^{p-1}}\abs{\partial_yu_{L_n}}^2dxdy
        \\
        \leq
   \norm{u_0-u_{L_n}}_{L^{p+1}(\GraphTimesZeroOne)} 
   \left(\norm{u_0}_{L^{p+1}(\GraphTimesZeroOne)} ^{p-2}+\norm{u_{L_n}}_{L^{p+1}(\GraphTimesZeroOne)} ^{p-2}\right)
   \norm{\partial_yu_{L_n}}^{2}_{L^{p+1}(\GraphTimesZeroOne)}.
    \end{multline*}
Summarizing, we have established that
\begin{multline*} 
 \abs*{\Re\int_{\GraphTimesZeroOne}\partial_y\left( |u_{L_n}|^{p-1}u_{L_n}\right)\partial_y\bar u_{L_n}dxdy}
            \\\lesssim
            \norm{u_0}_{L^\infty(\GraphTimesZeroOne)}^{p-1}\norm{\partial_yu_{L_n}}_{L^2(\GraphTimesZeroOne)}^2+
            \norm{u_0-u_{L_n}}_{L^{p+1}(\GraphTimesZeroOne)}^{\min(p-1,1)}\norm{\partial_yu_{L_n}}^{2}_{H^1(\GraphTimesZeroOne)}.
\end{multline*}
By construction, for each $x\in\Graph$, the function $\partial_yu_{L_n}(x,\cdot)$ verifies Dirichlet conditions on $[0,1]$. Therefore, by the Poincaré inequality, we have 
\[ 
\| \partial_{y} u_{L_n}(x) \|_{L^2(0,1)}^2 \lesssim \| \partial_{yy} u_{L_n}(x) \|_{L^2(0,1)}^2.
\]
Integrating in $x$ gives
\[ 
\| \partial_{y} u_{L_n} \|_{L^2(\GraphTimesZeroOne)} \lesssim \| \partial_{yy} u_{L_n} \|_{L^2(\GraphTimesZeroOne)}
\]
we have 
\[
\norm{\partial_{y}u_{L_n}}^{2}_{H^1(\GraphTimesZeroOne)}\leq 
C\left(\norm{\partial_{yx}u_{L_n}}^{2}_{L^2(\GraphTimesZeroOne)} + \norm{\partial_{yy}u_{L_n}}^{2}_{L^2(\GraphTimesZeroOne)} \right).
\]
Thus, inserting these estimates into \eqref{eq:hessian_d_y}, we get for some $C >0$
\begin{equation}\label{eqCrucialIneq}
    \begin{aligned} 
    0 & \geq \left(\frac{1}{L_n^2} - C\right) \norm{\partial_{yy}u_{L_n}}_{L^2(\GraphTimesZeroOne)}^2 \\
    &\quad + \left(1 -  \norm{u_0-u_{L_n}}_{L^{p+1}(\GraphTimesZeroOne)}^{\min(p-1,1)} \right) \norm{\partial_{xy}u_{L_n}}_{L^2(\GraphTimesZeroOne)}^2.
\end{aligned}
\end{equation}
 This also implies that 
\[
\pi_{\omega,0}(\tilde u_{L_n})\, \tilde u_{L_n} \longrightarrow u_0
\qquad\text{in } H^1(\GraphTimesZeroOne),
\]
and therefore, by \eqref{eqCrucialIneq}, there exists $N>0$ such that for all $n\geq N$,
\[
\|\partial_{yy} u_{L_n}\|_{L^2(\GraphTimesZeroOne)}
+\|\partial_{xy} u_{L_n}\|_{L^2(\GraphTimesZeroOne)} = 0.
\]
Consequently, for all $n\geq N$, since $\partial_y u_{L_n}$ satisfies Dirichlet boundary conditions, we infer that
\[
\partial_y u_{L_n}\equiv 0.
\]
We now show that this property persists for every $L$ in the whole interval 
\[
0\leq L\leq L_N.
\]
Indeed, by Proposition~\ref{propMonoton} we have 
\[
s_{\omega,L_N} \leq s_{\omega,L}
\qquad\text{for all } L\in [0,L_N],
\]
while, by the definition of $s_{\omega,0}$ and the fact that 
$\partial_y u_{L_N} \equiv 0$, we also have
\[
s_{\omega,L_N} = s_{\omega,L} = s_{\omega,0}.
\]
Suppose by contradiction that there exists some 
$\tilde L \in (0,L_N)$ such that $s_{\omega,\tilde L}$ admits a minimizer 
$v_{\tilde L}$ with
\[
\partial_y v_{\tilde L} \not\equiv 0.
\]
Let $K\in(\tilde L,L_N)$. Then $s_{\omega,K}=s_{\omega,\tilde L}$ and
\[
I_{\omega,K}(v_{\tilde L})
=
I_{\omega,\tilde L}(v_{\tilde L})
+
\left(
\frac{1}{K^2} - \frac{1}{\tilde L^2}
\right)
\|\partial_y v_{\tilde L}\|^2_{L^2(\GraphTimesZeroOne)}
<0.
\]
Hence,
\[
s_{\omega,K}
\leq
c_p \big\|\pi_{\omega,K}(v_{\tilde L})\, v_{\tilde L}\big\|_{L^{p+1}(\GraphTimesZeroOne)}^{p+1}
<
c_p \|v_{\tilde L}\|_{L^{p+1}(\GraphTimesZeroOne)}^{p+1}
=
s_{\omega,\tilde L},
\]
a contradiction.

To conclude, observe first that under the present assumptions, 
a minimizer for $s_{\omega,L}$ exists for every $L\in[0,L_{min}]$.  
Indeed, this is immediate if $L_\infty \geq L_{min}$. 
If instead $L_\infty < L_{min}$, then for any 
$L\in (L_\infty, L_{min})$ we have
\[
s_{\omega,L}=s_{\omega,0}=s^\infty_{\omega,L},
\]
since $L \to s^\infty_{\omega,L}$ is decreasing and bounded below by $s_{\omega,L}$.  
Therefore, any minimizer of $s_{\omega,0}$ (which exists by assumption) is also 
a minimizer of $s_{\omega,L}$ for all such $L$.
 
 It remains to show that $L_{min}=L_N$. We clearly have $L_N\leq L_{min}$. Assume by contradiction that $L_N< L_{min}$. Let $L\in(L_N, L_{min})$ and let $u_L\in\mathcal{N}_L$ be such that 
 \begin{equation*}
     s_{\omega,L} \leq c_p \| u_L\|_{L^{p+1}(\GraphTimesZeroOne}^{p+1} \leq s_{\omega,L} + \eps
 \end{equation*}
 for some $\eps >0$ to be chosen later. 
 $s_{\omega,L}$ such that $\partial_yu_L\not\equiv 0$. This implies (see e.g. \eqref{eqNehariComp}-\eqref{eq:useful-equation})  that
 \[
 I_{\omega,L_{min}}(u_L)<0,\quad s_{\omega,L_{min}}<s_{\omega,L},
 \]
 which is a contradiction with the definition of $L_{min}$. 
 \end{proof}

Finally, we observe that the threshold $L_{min}$ is sharp as it separates the one-dimensional ground states from purely two-dimensional ones.

\begin{proposition}\label{PrpLsharp}
      If there exists a minimizer $v_L$ of $s_{\omega,L}$ for $L>L_{min}$, then it verifies $\partial_yv_L\not\equiv 0$. 
\end{proposition}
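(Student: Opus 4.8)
The plan is to argue by contradiction, exploiting the strict monotonicity of $L \mapsto s_{\omega,L}$ on $(L_{min},\infty)$ established in Proposition~\ref{propMonoton}. Suppose that for some $L > L_{min}$ there exists a minimizer $v_L$ of $s_{\omega,L}$ with $\partial_y v_L \equiv 0$; we will derive a contradiction.

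The key observation is that when $\partial_y v_L \equiv 0$, the transverse term $L^{-2}\norm{\partial_y v_L}_{L^2(\GraphTimesZeroOne)}^2$ in the rescaled Nehari functional \eqref{eqNehariL} vanishes, so that $I_{\omega,L}(v_L) = I_{\omega,0}(v_L)$. Since $v_L \in \mathcal{N}_L$ forces $I_{\omega,L}(v_L) \leq 0$, we obtain $I_{\omega,0}(v_L) \leq 0$, and combined with $\partial_y v_L \equiv 0$ this places $v_L$ in the more restrictive Nehari set $\mathcal{N}_0$. By the very definition of $s_{\omega,0}$, it then follows that
\[
s_{\omega,0} \leq c_p\norm{v_L}_{L^{p+1}(\GraphTimesZeroOne)}^{p+1} = s_{\omega,L},
\]
where the last equality holds precisely because $v_L$ minimizes $s_{\omega,L}$.

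On the other hand, Proposition~\ref{propMonoton} guarantees that $L \mapsto s_{\omega,L}$ is constant on $[0,L_{min}]$, so that $s_{\omega,0} = s_{\omega,L_{min}}$, and strictly decreasing on $(L_{min},\infty)$. Since $L > L_{min}$, this yields the strict inequality $s_{\omega,L} < s_{\omega,L_{min}} = s_{\omega,0}$. Together with the previous display, we reach $s_{\omega,0} \leq s_{\omega,L} < s_{\omega,0}$, a contradiction. Hence any minimizer $v_L$ of $s_{\omega,L}$ with $L > L_{min}$ must satisfy $\partial_y v_L \not\equiv 0$.

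This argument is short and presents no substantial obstacle: all the analytical content has already been invested in the strict monotonicity of Proposition~\ref{propMonoton}, and the present statement merely records its mirror image on the other side of the threshold $L_{min}$. The only point requiring a moment of care is confirming the membership $v_L \in \mathcal{N}_0$, which is immediate from the structure of the functionals $I_{\omega,L}$ and $I_{\omega,0}$ in \eqref{eqNehariL}--\eqref{eqNehariL0} once the transverse derivative is known to vanish.
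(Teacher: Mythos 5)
Your proof is correct and follows essentially the same route as the paper: both argue by contradiction, observing that a minimizer with $\partial_y v_L \equiv 0$ lies in the Nehari constraint set at the threshold (the paper uses $\mathcal{N}_{L_{min}}$, you use $\mathcal{N}_0$, which is equivalent since $s_{\omega,0}=s_{\omega,L_{min}}$ by constancy on $[0,L_{min}]$), and then invoke the strict monotonicity of $L \mapsto s_{\omega,L}$ from Proposition~\ref{propMonoton} to reach $s_{\omega,0} \leq s_{\omega,L} < s_{\omega,L_{min}} = s_{\omega,0}$. Your write-up is in fact slightly more explicit than the paper's about why the constraint membership holds.
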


\begin{proof}
    The proof follows from Proposition \ref{propMonoton}. Indeed, suppose by contradiction that there exists $L > L_{min}$ such a minimizer $v_L$ of $s_{\omega,L}$  exists and $\partial_y v_L\equiv 0$. Then we get that $s_{\omega,L} < s_{\omega,L_{min}}$ and $I_{\omega,L_{min}}(v_L) = 0$ which contradicts the definition of $s_{\omega,L_{min}}$.
\end{proof}

\begin{proof}[Proof of Theorem \ref{thmShrinking1}]
    Theorem \ref{thmShrinking1} is a direct consequence of Propositions \ref{propMonoton}, \ref{prop:rigidity} and \ref{PrpLsharp}.
\end{proof}


\bibliographystyle{abbrv} 
\bibliography{biblio}

\begin{thebibliography}{10}

\bibitem{AdCaFiNo14}
R.~Adami, C.~Cacciapuoti, D.~Finco, and D.~Noja.
\newblock Constrained energy minimization and orbital stability for the {NLS}
  equation on a star graph.
\newblock {\em Ann. Inst. Henri Poincar{\'e}, Anal. Non Lin{\'e}aire},
  31(6):1289--1310, 2014.

\bibitem{Ag82}
S.~Agmon.
\newblock {\em Lectures on exponential decay of solutions of second-order
  elliptic equations: bounds on eigenfunctions of {$N$}-body {S}chr\"odinger
  operators}, volume~29 of {\em Mathematical Notes}.
\newblock Princeton University Press, Princeton, NJ; University of Tokyo Press,
  Tokyo, 1982.

\bibitem{AkBaIbKi24}
T.~Akahori, Y.~Bahri, S.~Ibrahim, and H.~Kikuchi.
\newblock Pitchfork bifurcation at line solitons for nonlinear {S}chr\"odinger
  equations on the product space {$\mathbb R\times\mathbb T$}.
\newblock {\em Ann. Henri Poincar\'e}, 25(7):3467--3497, 2024.

\bibitem{AkKu24}
S.~Akduman and P.~Kuchment.
\newblock On open book analogs of quantum graphs.
\newblock Preprint, {arXiv}:2406.16190 [math-ph] (2024), 2024.

\bibitem{BeCa81}
H.~Berestycki and T.~Cazenave.
\newblock {Instabilit\'e des \'etats stationnaires dans les \'equations de
  {S}chr\"odinger et de {K}lein-{G}ordon non lin\'eaires}.
\newblock {\em C. R. Acad. Sci. Paris}, 293(9):489--492, 1981.

\bibitem{BeWe10}
H.~Berestycki and J.~Wei.
\newblock {On least energy solutions to a semilinear elliptic equation in a
  strip}.
\newblock {\em Discrete \& Continuous Dynamical Systems-A}, 28(3):1083--1099,
  2010.

\bibitem{BeSh91}
F.~A. Berezin and M.~A. Shubin.
\newblock {\em The {S}chr\"odinger equation}, volume~66 of {\em Mathematics and
  its Applications (Soviet Series)}.
\newblock Kluwer Academic Publishers Group, Dordrecht, 1991.

\bibitem{BeKu13}
G.~Berkolaiko and P.~Kuchment.
\newblock {\em Introduction to quantum graphs}, volume 186 of {\em Mathematical
  Surveys and Monographs}.
\newblock American Mathematical Society, Providence, RI, 2013.

\bibitem{BrLi83}
H.~Br{\'e}zis and E.~Lieb.
\newblock {A relation between pointwise convergence of functions and
  convergence of functionals}.
\newblock {\em Proceedings of the American Mathematical Society},
  88(3):486--490, 1983.

\bibitem{CaLi82}
T.~Cazenave and P.-L. Lions.
\newblock {Orbital stability of standing waves for some nonlinear
  Schr{\"o}dinger equations}.
\newblock {\em Communications in Mathematical Physics}, 85:549--561, 1982.

\bibitem{Co20}
J.~E. Corbin.
\newblock Convergence of spectra of uniformly fattened open book structures.
\newblock {\em J. Math. Anal. Appl.}, 489(2):32, 2020.
\newblock Id/No 124181.

\bibitem{CoKu20}
J.~E. Corbin and P.~Kuchment.
\newblock Spectra of ``fattened'' open book structures.
\newblock In {\em The mathematical legacy of Victor Lomonosov. Operator
  theory}, pages 99--108. Berlin: De Gruyter, 2020.

\bibitem{DeDoGaSe23}
C.~De~Coster, S.~Dovetta, D.~Galant, and E.~Serra.
\newblock On the notion of ground state for nonlinear {Schr{\"o}dinger}
  equations on metric graphs.
\newblock {\em Calc. Var. Partial Differ. Equ.}, 62(5):28, 2023.
\newblock Id/No 159.

\bibitem{CoDoGaSeTi23}
C.~De~Coster, S.~Dovetta, D.~Galant, E.~Serra, and C.~Troestler.
\newblock Constant sign and sign changing {NLS} ground states on noncompact
  metric graphs.
\newblock Preprint, {arXiv}:2306.12121 [math.{AP}] (2023), 2023.

\bibitem{Do19}
S.~Dovetta.
\newblock Mass-constrained ground states of the stationary {NLSE} on periodic
  metric graphs.
\newblock {\em NoDEA Nonlinear Differential Equations Appl.}, 26(5):Paper No.
  30, 30, 2019.

\bibitem{Do25}
S.~Dovetta.
\newblock Nonuniqueness of normalized ground states for nonlinear
  {Schr{\"o}dinger} equations on metric graphs.
\newblock {\em Proc. Lond. Math. Soc. (3)}, 130(2):33, 2025.
\newblock Id/No e70025.

\bibitem{DoSeTe25}
S.~Dovetta, E.~Serra, and L.~Tentarelli.
\newblock Non-uniqueness of normalized nls ground states on polygons with
  homogeneous neumann boundary conditions.
\newblock {\em Discrete and Continuous Dynamical Systems}, 2025.

\bibitem{DoSeTi23}
S.~Dovetta, E.~Serra, and P.~Tilli.
\newblock Action versus energy ground states in nonlinear {Schr{\"o}dinger}
  equations.
\newblock {\em Math. Ann.}, 385(3-4):1545--1576, 2023.

\bibitem{Et06}
J.~B. Etnyre.
\newblock Lectures on open book decompositions and contact structures.
\newblock In {\em Floer homology, gauge theory, and low-dimensional topology},
  volume~5 of {\em Clay Math. Proc.}, pages 103--141. Amer. Math. Soc.,
  Providence, RI, 2006.

\bibitem{Ex08}
P.~Exner and O.~Post.
\newblock Quantum networks modelled by graphs.
\newblock {\em AIP Conference Proceedings}, 998(1):1--17, 2008.

\bibitem{Gi05}
E.~Giroux.
\newblock What is {{\dots}} an {Open} {Book}?
\newblock {\em Notices Am. Math. Soc.}, 52(1):42--47, 2005.

\bibitem{Gr11}
P.~Grisvard.
\newblock {\em Elliptic problems in nonsmooth domains}, volume~69 of {\em
  Class. Appl. Math.}
\newblock Philadelphia, PA: Society for Industrial {and} Applied Mathematics
  (SIAM), reprint of the 1985 hardback ed. edition, 2011.

\bibitem{JeLu22}
L.~Jeanjean and S.-S. Lu.
\newblock On global minimizers for a mass constrained problem.
\newblock {\em Calc. Var. Partial Differ. Equ.}, 61(6):18, 2022.
\newblock Id/No 214.

\bibitem{JeTa02}
L.~Jeanjean and K.~Tanaka.
\newblock A positive solution for an asymptotically linear elliptic problem on
  {{\(\mathbb R^{N}\)}} autonomous at infinity.
\newblock {\em ESAIM, Control Optim. Calc. Var.}, 7:597--614, 2002.

\bibitem{Ko00}
S.~Kosugi.
\newblock A semilinear elliptic equation in a thin network-shaped domain.
\newblock {\em Journal of the Mathematical Society of Japan}, 52(3):673--697,
  2000.

\bibitem{Ko02}
S.~Kosugi.
\newblock Semilinear elliptic equations on thin network-shaped domains with
  variable thickness.
\newblock {\em J. Differ. Equations}, 183(1):165--188, 2002.

\bibitem{LaTaCh22}
T.~Lawrie, G.~Tanner, and D.~Chronopoulos.
\newblock A quantum graph approach to metamaterial design.
\newblock {\em Scientific Reports}, 12(1):18006, 2022.

\bibitem{LeSh24}
S.~Le~Coz and B.~Shakarov.
\newblock Ground states on a fractured strip and one dimensional reduction.
\newblock Preprint, {arXiv}:2411.18187 [math.{AP}] (2024), 2024.

\bibitem{Li84a}
P.-L. Lions.
\newblock The concentration-compactness principle in the calculus of
  variations. {T}he locally compact case. {I}.
\newblock {\em Ann. Inst. H. Poincar\'e Anal. Non Lin\'eaire}, 1(2):109--145,
  1984.

\bibitem{Li84b}
P.-L. Lions.
\newblock The concentration-compactness principle in the calculus of
  variations. {T}he locally compact case. {II}.
\newblock {\em Ann. Inst. H. Poincar\'e Anal. Non Lin\'eaire}, 1(4):223--283,
  1984.

\bibitem{Li85}
P.-L. Lions.
\newblock The concentration-compactness principle in the calculus of
  variations. {The} limit case. {I}.
\newblock {\em Rev. Mat. Iberoam.}, 1(1):145--201, 1985.

\bibitem{Ne60}
Z.~Nehari.
\newblock On a class of nonlinear second-order differential equations.
\newblock {\em Trans. Am. Math. Soc.}, 95:101--123, 1960.

\bibitem{NiPe04}
S.~Nicaise and O.~M. Penkin.
\newblock Poincar\'e-{P}erron's method for the {D}irichlet problem on
  stratified sets.
\newblock {\em J. Math. Anal. Appl.}, 296(2):504--520, 2004.

\bibitem{Pa18}
A.~Pankov.
\newblock Nonlinear {S}chr\"odinger equations on periodic metric graphs.
\newblock {\em Discrete Contin. Dyn. Syst.}, 38(2):697--714, 2018.

\bibitem{Po12}
O.~Post.
\newblock {\em Spectral analysis on graph-like spaces}, volume 2039 of {\em
  Lect. Notes Math.}
\newblock Berlin: Springer, 2012.

\bibitem{RoTz09}
F.~Rousset and N.~Tzvetkov.
\newblock Transverse nonlinear instability for two-dimensional dispersive
  models.
\newblock {\em Ann. Inst. Henri Poincar{\'e}, Anal. Non Lin{\'e}aire},
  26(2):477--496, 2009.

\bibitem{TeTzVi14}
S.~Terracini, N.~Tzvetkov, and N.~Visciglia.
\newblock {The nonlinear Schr{\"o}dinger equation ground states on product
  spaces}.
\newblock {\em Analysis \& PDE}, 7(1):73--96, 2014.

\bibitem{Ya14}
Y.~Yamazaki.
\newblock Transverse instability for a system of nonlinear {Schr{\"o}dinger}
  equations.
\newblock {\em Discrete Contin. Dyn. Syst., Ser. B}, 19(2):565--588, 2014.

\bibitem{Ya15}
Y.~Yamazaki.
\newblock Stability of line standing waves near the bifurcation point for
  nonlinear {S}chr\"odinger equations.
\newblock {\em Kodai Math. J.}, 38(1):65--96, 2015.

\end{thebibliography}

\end{document}